\documentclass[final]{siamart0216}
% \usepackage{graphicx}
% \usepackage{epstopdf}
% \usepackage{calligra}
% \usepackage{algpseudocode}

% \ifpdf
%   \DeclareGraphicsExtensions{.eps,.pdf,.png,.jpg}
% \else
%   \DeclareGraphicsExtensions{.eps}
% \fi
% \usepackage{refcheck}
\usepackage{amsfonts,amsmath,amssymb}
\usepackage{mathrsfs,mathtools,stmaryrd,wasysym}
\usepackage{enumerate}
\usepackage{esint}
\usepackage{graphicx,psfrag}
\usepackage{hyperref}
\DeclareMathAlphabet{\mathpzc}{OT1}{pzc}{m}{it}
\usepackage{lineno}
\usepackage{stmaryrd}
\usepackage{enumerate}
\usepackage{pgf,tikz,pgfplots}

\usepackage{pstricks-add}

\usepackage{enumitem}

\usepackage[notcite,notref]{showkeys} % To see crossreferences.

\DeclareFontEncoding{FMS}{}{}
\DeclareFontSubstitution{FMS}{futm}{m}{n}
\DeclareFontEncoding{FMX}{}{}
\DeclareFontSubstitution{FMX}{futm}{m}{n}
\DeclareSymbolFont{fouriersymbols}{FMS}{futm}{m}{n}
\DeclareSymbolFont{fourierlargesymbols}{FMX}{futm}{m}{n}
\DeclareMathDelimiter{\VERT}{\mathord}{fouriersymbols}{152}{fourierlargesymbols}{147}

%\linenumbers

\numberwithin{equation}{section}

%%%%%%%%%%%%%%%%%%%%%%%%%%%%%%%%%%%%%%%%%%  paper

%\usepackage{showkeys}
%\usepackage{}
\usepackage{mathrsfs}
\def\ds{\displaystyle}
%Letras

\def\O{\Omega}

\def\b{\beta}
%\def\e{\varepsilon}

%Espacios
      % en
        % y
   % sobre

%\newcommand{\bC}{{\bf C}}

\def\CT{{\mathcal T}}

\def\H{\mathrm H}
\def\Q{\mathbb Q}
\def\I{\mathrm I}
\def\V{\mathrm V}

\def\L{\mathrm L}

\def\W{\mathrm W}

\def\div{\mathop{\mathrm{div}}\nolimits}

\def\bu{\boldsymbol{u}}

\def\bv{\boldsymbol{v}}

\def\bz{\boldsymbol{z}}

\def\b0{\boldsymbol{0}}

\def\bPi{\boldsymbol{\Pi}}
\def\bXi{\boldsymbol{\Xi}}

\def\bw{\boldsymbol{w}}

\renewcommand\H{\mathrm{H}}
\renewcommand\L{\mathrm{L}}
\renewcommand\Q{\mathrm{Q}}
\def\bVh{\mathbf{V}_h}
\def\bWh{\mathbf{W}_h}

\def\bUh{\mathrm{U}_h}
\def\bVV{\mathrm{V}}
\def\bVVh{\mathrm{V}_h}
\def\bZ{\mathbf{Z}}
\def\bZh{\mathbf{Z}_h}

%%%%%%%%%%%%%%%%%%%%%%%%%%%%%%%%%%%%%%%%%%%%%%%
%%%%%%%%%LETRAS ESTILO PAPERS DE OTAROLA%%%%%%%%%%%%%%%%%
%%%%%%%%%%%%%%%%%%%%%%%%%%%%%%%%%%%%%%%%%%%%%%%

\def\mtW{\mathbf{W}}
\def\bV{\mathbf{V}}

%\def\e{\mathbf{e}}

%%%%%%%%%%%%%%%%%%%%%%%%%%%%%%%%%%%%%%%%%%%%%%%%%%%%%%%
%%%%%%%%%%%%OPERADORES DIVERGENCIA, ROTACIONAL , SALTO Y DEF DE TRAZA Y DIFERENCIAL DX
%%%%%%%%%%%%%%%%%%%%%%%%%%%%%%%%%%%%%%%%%%%%%%%%%%%%%%%

\def\div{\mathop{\mathrm{div}}\nolimits}

%%%%%%%%%%%%%%%%%%%%%%%%%%%%%%%%%%%%%%%%%%%%%%%%%%
%%%%%%%%%%% TEOREMAS, LEMAS, PROPOS, REMARKS, PROBLEMAS%%%%%%%%%
%%%%%%%%%%%%%%%%%%%%%%%%%%%%%%%%%%%%%%%%%%%%%%%%%%
%\newtheorem{lemma}{Lemma}[section]

%\newtheorem{theorem}{Theorem}[section]
%\newtheorem{corollary}{Corollary}[section]

%\theoremstyle{remark}
\newtheorem{remark}[theorem]{Remark}

\usepackage{pgf,tikz}
\usetikzlibrary{arrows}

\newcommand{\TheTitle}{A virtual element method for a convective Brinkman-Forchheimer problem coupled with a heat equation}
\newcommand{\ShortTitle}{A VEM for Brinkman-Forchheimer coupled with a heat equation}
\newcommand{\TheAuthors}{D. Amigo, F. Lepe, E. Ot\'arola and G. Rivera}

\headers{\ShortTitle}{\TheAuthors}

\title{{\TheTitle}\thanks{DA is partially supported by ANID-Chile through grant ACT210087. EO is partially supported by ANID-Chile through FONDECYT grant 1220156. GR is partially supported by ANID-Chile through FONDECYT grant 1231619 and Universidad de Los Lagos Regular R02/21.}}
\author{Danilo Amigo\thanks{Departamento de Matem\'atica, Universidad del B\'io-B\'io, Concepci\'on, Chile.
(\email{danilo.amigo2101@alumnos.ubiobio.cl})}
\and
Felipe Lepe\thanks{Departamento de Matem\'atica, Universidad del B\'io-B\'io, Concepci\'on, Chile.
(\email{flepe@ubiobio.cl})}
\and
Enrique Ot\'arola\thanks{Departamento de Matem\'atica, Universidad T\'ecnica Federico Santa Mar\'ia, Valpara\'iso, Chile.
(\email{enrique.otarola@usm.cl}, \url{http://eotarola.mat.utfsm.cl/}).}
\and
Gonzalo Rivera\thanks{Departamento de Ciencias Exactas, Universidad de Los Lagos, Osorno, Chile.
(\email{gonzalo.rivera@ulagos.cl}).}
}

% Optional PDF information
\ifpdf
\hypersetup{
  pdftitle={\TheTitle},
  pdfauthor={\TheAuthors}
}
\fi

\date{Draft version of \today.}

\begin{document}

\maketitle

\begin{abstract}
We develop a virtual element method to solve a convective Brinkman-Forchheimer problem coupled with a heat equation. This coupled model may allow for thermal diffusion and viscosity as a function of temperature. Under standard discretization assumptions, we prove the well posedness of the proposed numerical scheme. We also derive optimal error estimates under appropriate regularity assumptions for the solution. We conclude with a series of numerical tests performed with different mesh families that complement our theoretical findings.
\end{abstract}

\begin{keywords}
non-isothermal flows, nonlinear equations, a convective Brinkman-Forchheimer problem, a heat equation, virtual element methods, stability, a priori error bounds.
\end{keywords}

% REQUIRED
\begin{AMS}
35Q30,          % Navier-Stokes equations
35Q35,          % PDEs in connection with fluid mechanics
65N12,       	% Stability and convergence of numerical methods for boundary value problems involving PDEs
65N15,          % Error bounds
65N30.          % Finite elements, Rayleigh-Ritz and Galerkin methods, finite methods
\end{AMS}

\section{Introduction}\label{sec:intro}

Let $\O\subset\mathbb{R}^2$ be an open and bounded domain with Lipschitz boundary $\partial \Omega$. In this work, we are interested in designing and analyzing a divergence-free virtual element method (VEM) for the temperature distribution of a fluid modeled by a convection-diffusion equation coupled with a convective Brinkman--Forchheimer problem. This model can be described by the following nonlinear system of partial differential equations (PDEs):
\begin{equation}\label{def:BFH}
\left\{\begin{array}{rccc}
-\div(\nu(T)\nabla\bu)+(\bu\cdot\nabla)\bu
+
\bu
+
|\bu|^{r-2}\bu
+\nabla \textsf{p}  &=&  \boldsymbol{f}  \text{ in } \Omega,
\\
\div(\bu) &=& 0 \,\, \text{in} \,\, \Omega, \\
-\div(\kappa(T)\nabla T)+\bu\cdot\nabla T &=& g 
\,\,\text{in}\,\, \Omega,
\end{array}\right.
\end{equation}
together with the Dirichlet boundary conditions $\bu = \boldsymbol{0}$ and $T=0$ on $\partial\Omega$. The unknowns of the system are the velocity field $\bu$, the
pressure $\textsf{p}$, and the temperature $T$ of the fluid. The data are the external force $\boldsymbol{f}$, the external heat source $g$, the viscosity coefficient $\nu$, and the thermal diffusion coefficient $\kappa$. We note that $\nu(\cdot)$ and $\kappa(\cdot)$ are coefficients that can depend nonlinearly on the temperature. Finally, the parameter $r$ is chosen such that $r \in [3, 4]$. For a discussion of this parameter $r$, the Forchheimer term in \eqref{def:BFH}, and its physical implications, we refer to the reader to \cite{campana2024finite} and the references therein.

Beginning with the pioneering works \cite{MR2997471,MR3200242}, great efforts have been made to develop and analyze VEMs. These methods are a relatively new family of solution techniques that allow general polytopal meshes, arbitrary polynomial degrees, and yet conforming $\H^1$-approximations. The peculiarity of VEMs is that the discrete spaces consist of functions that are not known pointwise, but about which a limited amount of information is available. This limited information is sufficient to construct stiffness matrices and right-hand sides. On the other hand, VEMs allow great flexibility with regard to the shape of the elements; for example, convex and non-convex elements are permitted. This is an advantage over the classical finite element methods (FEMs), which make it possible to deal with domains that are difficult to discretize with triangles or quadrilaterals. For these reasons, the VEM is generally considered as a generalization of the FEM. The analysis of VEMs has been successfully developed for a large number of problems. We refer the interested reader to \cite{MR4510898, MR4586821} for a recent review and discussion. As for the development of VEMs for various linear and nonlinear flow problems, we refer the reader to the non-exhaustive list \cite{MR4667549,MR3164557,MR4754972,AVV,MR3626409,MR3796371,MR3895873,NMTMA-17-210}. An important feature of some methods for the treatment of incompressible fluids is that the divergence-free condition is preserved at the discrete level.

The aim of the present work is to develop and analyze a VEM to approximate the velocity, pressure, and temperature variables that solve the system \eqref{def:BFH}. Our analysis is inspired by the recent work \cite{AVV}, in which the authors propose a divergence-free VEM to solve the coupling between the Navier--Stokes equations and a suitable heat equation. Here, the viscosity coefficient depends on the temperature variable. In our work, we complement and extend the results in \cite{AVV} in the following two directions: First, the model we consider allows the dependence of the thermal diffusion coefficient $\kappa(\cdot)$ on the temperature variable. Second, we consider the so-called convective Brinkman--Forchheimer equations. In contrast to \cite{AVV}, our model also considers the term $\bu + |\bu|^{r-2}\bu$, with $r \in [3,4]$. The consideration of this term was suggested by Forchheimer \cite{forchheimer1901wasserbewegung}, who realized that Darcy’s law is not adequate for moderate Reynolds numbers. Indeed, Forchheimer found that the relationship between the Darcy velocity and the pressure gradient was nonlinear and that this nonlinearity appeared to be quadratic $(r=3)$ for a variety of experimental data \cite{forchheimer1901wasserbewegung}. This leads to a modification of the Darcy equations, usually referred to as the Darcy--Forchheimer equations \cite{MR4659334,MR4092292,MR4049400,MR2425154,MR2948707,MR3022234}. In \cite{forchheimer1901wasserbewegung}, Forchheimer also noted that some data sets could not be described by the quadratic correction so he also postulated that the correction of Darcy’s law could allow a polynomial expression for $\boldsymbol{u}$, e.g., $\boldsymbol{u} + |\boldsymbol{u}|\boldsymbol{u} + |\boldsymbol{u}|^2\boldsymbol{u}$ and $\boldsymbol{u} + |\boldsymbol{u}|^{r-2}\boldsymbol{u}$; see \cite[page 59]{muskat1937flow}, \cite[Section 2.3]{firdaouss1997nonlinear}, and \cite[page 12]{straughan2008stability}. In practice, $r$ takes the value $3$ and $4$ in several applications  \cite{balhoff2010polynomial,firdaouss1997nonlinear,MR2027361,MR1090724,MR1856630,skjetne1999new,MR3636305} and also fractional values such as $r = 7/2$ \cite[page 133]{skjetne1999new}. This is the reason why we consider $r \in [3,4]$. In view of these considerations, \eqref{def:BFH} can be seen as an extension of the model in \cite{AVV}.

\subsection{Contributions}

To the best of our knowledge, this is the first paper that analyzes a VEM for the nonlinear coupled problem \eqref{def:BFH}. Since several sources of nonlinearity are involved, analyzing a solution technique is far from trivial. In the following, we list what we consider to be the main contributions of our work:

$\bullet$ \emph{A VEM:} Inspired by the scheme proposed in \cite{AVV} and the discrete spaces proposed and analyzed in \cite{MR3626409,MR3796371}, we propose the VEM \eqref{def:weak_BFH_discrete}.

$\bullet$ \emph{Existence and uniqueness of discrete solutions:} We derive an existence result for the discrete problem \eqref{def:weak_BFH_discrete} without restriction on the
problem data by using a fixed point strategy; see Theorem \ref{thm:existence}. Moreover, we obtain a global uniqueness result when the problem data is suitably restricted; see Theorem \ref{thm:uniqueness}.

$\bullet$ \emph{Optimal error estimates:} Assuming that the continuous and discrete problems admit unique solutions and under suitable regularity assumptions for the continuous solution, we derive optimal error estimates for the proposed VEM; see Theorem \ref{thm:error_estimates}. The analysis borrows ideas and components from \cite{AVV}, \cite{MR3796371}, and \cite{NMTMA-17-210}.

% In our case, we aim to extend the application of the VEM of  \cite{AVV} to a fluid immerse in porous media, implying the presence of the Darcy term $\bu$ on the system and hence, a convective Brinkman problem. An additional difference is related to the Forchheimer term, that describes a nonlinear relation between the pressure gradient and the Darcy velocity. From the numerical point of view, a particularly in the VEM that we propose,  to handle with  the Forchheimer term,  we need to resort to  \cite{NMTMA-17-210}, where the authors have proposed a  similar VEM  as in our paper but for a Stokes problem with a damping term that has the same structure compared with the nonlinear Forchheimer term in \eqref{def:BFH}. Hence, since  \eqref{def:BFH} generalize the problems considered in \cite{AVV} and  \cite{NMTMA-17-210}, the analysis of the VEM take borrowed ingredients of each of these references in order to derive, in one hand, existence of discrete solutions and on the other, a priori error estimates, where  suitable  smallness assumptions on the data are required to prove these properties for the discrete problem.

\subsection{Outline}
The paper is structured as follows: We begin with section \ref{sec:notation_and_preliminaries}, where we introduce notations and basic assumptions that we will use in our work. In sections \ref{sec:BF}  and \ref{sec:heat}, we summarize some results related to the convective Brinkmann--Forchheimer problem and a suitable heat equation, respectively.
% , namely, the variational formulation, properties that the bilinear forms satisfy and an existence, stability and uniqueness result for the solution of this fluid system
% whereas, in section \ref{sec:heat}, we do the same for the heat equation.
Section \ref{sec:BFH} is devoted to review existence, uniqueness, and stability results for the coupled problem \eqref{def:BFH}. The core of our work begins in section \ref{sec:vem}, where we first introduce the standard techniques for analyzing VEMs and propose our discrete scheme \eqref{def:weak_BFH_discrete}.
% establish the discrete framework in which we will operate for the numerical analysis, namely mesh properties, projections, and virtual element spaces together with their degrees of freedom. Also, as is customary in the VEM framework, we present the discrete bilinear forms and their properties that lead to the VEM discretization of \eqref{def:BFH}.
In section \ref{sec:ex_uniq_sol_vem}, we prove the existence of discrete solutions without restriction on the problem data, as well as a global uniqueness result when the problem data is appropriately restricted. Section \ref{sec:error} is devoted to the development of a rigorous analysis of error estimates. Finally, in section  \ref{sec:numericos}, we report a series of numerical tests in which we evaluate the performance of the proposed method for different configurations and polygonal meshes.
%
%
%\textcolor{red}{I'm still working on this. Please do not edit yet!} 
\section{Notation and preliminary remarks}
\label{sec:notation_and_preliminaries}
Let us establish the notation and the framework within which we will work.

 \subsection{Notation}
 \label{sec:notation}
 In this paper, $\Omega$ is an open and bounded polygonal domain of $\mathbb{R}^{2}$ with Lipschitz boundary $\partial \Omega$. If $\mathscr{X}$ and $\mathscr{Y}$ are normed vector spaces, we write $\mathscr{X} \hookrightarrow \mathscr{Y}$ to denote that $\mathscr{X}$ is continuously embedded in $\mathscr{Y}$. We denote by $\mathscr{X}'$ and $\|\cdot\|_{\mathscr{X}}$ the dual and the norm of $\mathscr{X}$, respectively.
For $ p \in (1,\infty)$, we denote by $q \in (1,\infty)$ its H\"older \emph{conjugate}, which is such that $1/p + 1/q = 1$. The relation $\texttt{a} \lesssim \texttt{b}$ means that $\texttt{a} \leq C \texttt{b}$, with a positive constant $C$ that is independent of $\texttt{a}$, $\texttt{b}$, and the discretization parameters. The value of $C$ can change at each occurrence. 

We use the standard notation for Lebesgue and Sobolev spaces. The spaces of vector-valued functions and the vector-valued functions themselves are denoted by bold letters. In particular, we use the following notation: $\bVV:=\H_0^1(\O)$, $\bV:=[\H_0^1(\O)]^2$, and $\Q:=\L_0^2(\O)$. As usual, the dual of $\H_0^1(\O)$ is denoted by $\H^{-1}(\O)$. We also introduce the space
$
\bZ := \{ \bv \in \bV : \, \div \bv = 0\}.
$
We conclude this section with the classical and well--known Poincar\'e inequality: For $v \in \bVV$, there exists $\textsf{C} = \textsf{C}(\Omega)$ such that
\begin{equation}\label{eq:Poincare}
\| v \|_{1,\O} \leq \textsf{C}| v |_{1,\O} \qquad \forall v \in \bVV.
\end{equation}
For the sake of simplicity, we assume that the vector-valued couterpart of \eqref{eq:Poincare}, i.e., $\| \mathbf{v} \|_{1,\O} \leq \textsf{C}| \mathbf{v} |_{1,\O}$ for all $\mathbf{v} \in \mathbf{V}$, holds with the same constant $\textsf{C}$.
% We consider the same notation for the Poincar\'e inequality on $\mathbf{H}_0^1(\O)$.

%%%%%%%%%%%%%%%%%%%%%%%%%%%%%%%%%%%%%%%%%%%%%%%%%%%%%%%%%%%%
%%%%%%%%%%%%%%%%%%%%%%%%%%%%%%%%%%%%%%%%%%%%%%%%%%%%%%%%%%%%
%%%%%%%%%%%%%%%%%%%%%%%%%%%%%%%%%%%%%%%%%%%%%%%%%%%%%%%%%%%%
%%%%%%%%%%%%%%%%%%%%%%%%%%%%%%%%%%%%%%%%%%%%%%%%%%%%%%%%%%%%
%%%%%%%%%%%%%%%%%%%%%%%%%%%%%%%%%%%%%%%%%%%%%%%%%%%%%%%%%%%%
%%%%%%%%%%%%%%%%%%%%%%%%%%%%%%%%%%%%%%%%%%%%%%%%%%%%%%%%%%%%
%%%%%%%%%%%%%%%%%%%%%%%%%%%%%%%%%%%%%%%%%%%%%%%%%%%%%%%%%%%%
%%%%%%%%%%%%%%%%%%%%%%%%%%%%%%%%%%%%%%%%%%%%%%%%%%%%%%%%%%%%

\subsection{Data assumptions}
\label{sec:data}

We make the following assumptions on the viscocity $\nu(\cdot)$ and the diffusion coefficient $\kappa(\cdot)$.

\begin{itemize}
\item[\textbf{A0})] $\kappa(\cdot)$ is extrictly positive, bounded, and Lipschitz continuous, i.e., there exist constants $\kappa_{*}, \kappa^{*}, \kappa_{lip} > 0$ such that 
\begin{equation*}
0 < \kappa_* \leq \kappa(r) \leq \kappa^*,
\qquad 
|\kappa(r_1) - \kappa(r_2)| \leq \kappa_{lip}|r_1 - r_2| 
\qquad \forall r,r_1,r_2 \in \mathbb{R}.
\end{equation*}
\item[\textbf{A1})] $\nu(\cdot)$ is extrictly positive, bounded, and Lipschitz continuous, i.e., there exist constants $\nu_{*}, \nu^{*}, \nu_{lip} > 0$ such that 
\begin{equation*}
\begin{split}
0<\nu_* \leq \nu(r) \leq \nu^*, 
\qquad
|\nu(r_1) - \nu(r_2)| \leq \nu_{lip}|r_1 - r_2|
\qquad \forall r,r_1,r_2 \in \mathbb{R}.
\end{split}
\end{equation*}
\end{itemize}

\section{A convective Brinkman--Forchheimer problem}
\label{sec:BF}
We present existence and uniqueness results for the following weak formulation of a convective Brinkman--Forchheimer problem: Given $\boldsymbol{f}\in[\H^{-1}(\O)]^2$, find $(\bu,\textsf{p})\in\bV\times\Q$ such that 
\begin{equation}\label{def:weak_BF}
% \left\{\begin{array}{rcl}
a_L(\bu,\bv)+c_N(\bu;\bu,\bv)+c_{F}(\bu;\bu,\bv)+ d(\bu,\bv) + b(\bv,\textsf{p})
= 
\left\langle\boldsymbol{f},\bv\right\rangle,
\quad 
b(\bu,\textsf{q}) = 0,
% \end{array}\right.
\end{equation}
for all $(\bv,\textsf{q})\in\bV\times\Q$. Here, $\left\langle \cdot ,\cdot \right\rangle$ denotes the duality pairing between $[\H^{-1}(\O)]^2$ and $\mathbf{V}$. The forms that occur in \eqref{def:weak_BF} are defined as follows: $a_L:\bV\times\bV \rightarrow \mathbb{R}$, $c_N, c_F :\bV\times\bV\times\bV \rightarrow \mathbb{R}$, $d:\bV\times\bV \rightarrow \mathbb{R}$, and $b:\bV\times\Q \rightarrow \mathbb{R}$ are such that
\begin{align}
a_L(\bv,\bw) & := \displaystyle \int_{\O} \nu\nabla \bv : \nabla \bw,
\qquad
c_N(\bz;\bv,\bw)  :=\displaystyle \int_{\O} (\bz\cdot\nabla)\bv\cdot\bw,
\label{eq:bilinear_form_aL_cN}
\\
c_{F}(\bz;\bv,\bw) & :=\displaystyle \int_{\O} |\bz|^{r-2}\bv\cdot\bw,
\qquad
d(\bv,\bw) :=\displaystyle \int_{\O} \bv\cdot\bw,
\label{eq:bilinear_form_cF_d}
\end{align}
and $b(\bv,\textsf{q}):= -\int_{\O} \textsf{q}\div(\bv)$.

We list some of the most important properties that these forms satisfy:

\begin{itemize}[leftmargin=*]
\item $a_L(\cdot,\cdot)$ is a coercive and continuous bilinear form: For every $\bv,\bw \in \bV$, we have
\begin{equation*}
% \label{eq:a_coer}
a_L(\bv,\bv) \geq \nu_{*}|\bv|_{1,\O}^2, 
\qquad 
a_L(\bv,\bw) \leq \nu^*|\bv|_{1,\O}|\bw|_{1,\O}.
\end{equation*}

\item $c_N(\cdot;\cdot,\cdot)$ is skew-symmetric: For every $\bz \in \bZ$ and $\bv,\bw \in \bV$, we have
\begin{equation}\label{eq:c_bound}
c_N(\bz;\bv,\bw) + c_N(\bz;\bw,\bv)= 0,
\qquad
c_N(\bz;\bv,\bv) = 0.
\end{equation}
In addition, $c_N(\cdot;\cdot,\cdot)$ is continuous: For every $\bz, \bv,\bw \in \bV$, we have
\begin{equation}
 c_N(\bz;\bv,\bw) \leq C_{N}|\bz|_{1,\O}|\bv|_{1,\O}|\bw|_{1,\O}.
\end{equation}

\item $c_{F}(\cdot;\cdot,\cdot)$ is continuous: For every $\bz, \bv,\bw \in \bV$, we have
\begin{equation}\label{eq:cF_bound}
c_{F}(\bz;\bv,\bw) \leq C_{F}|\bz|_{1,\O}^{r-2}|\bv|_{1,\O}|\bw|_{1,\O}.
\end{equation}
\item $b(\cdot,\cdot)$ is a continuous bilinear form: For every $\textsf{q} \in \Q$ and $\bv \in \bV$, we have
\begin{equation}\label{def:b_bound}
b(\bv,\textsf{q}) \leq |\bv|_{1,\O}\|\textsf{q}\|_{0,\O}.
\end{equation}
Moreover, $b(\cdot,\cdot)$ satisfies an inf-sup condition: There exists a constant $\beta>0$ such that
\begin{equation}\label{eq:infsupcontinua}
\underset{\bv \in \bV}{\sup} \; \dfrac{b(\bv,\textsf{q})}{|\bv|_{1,\O}} \geq \beta \|\textsf{q}\|_{0,\O}
\quad
\forall \textsf{q} \in \Q.
\end{equation}
\end{itemize} 

\subsection{Existence, stability, and uniqueness results}

We present existence, uniqueness, and stability results for solutions of \eqref{def:weak_BF}. In particular, we present the existence of solutions without restriction on the data and a global uniqueness result when the data is suitably restricted \cite[Theorem 1, Proposition 2, and Theorem 3]{campana2024finite}.

\begin{proposition}[existence, stability, and uniqueness]\label{pro:existencia1}
Let $\O\subset \mathbb{R}^2$ be an open and bounded domain with Lipschitz boundary $\partial\O$, and let $\boldsymbol{f}\in[\H^{-1}(\O)]^2$. Then, there exists at least one solution $(\bu,\textsf{p})\in\bV\times\Q$ for problem \eqref{def:weak_BF} which satisfies 
\[
|\bu|_{1,\O} \leq \nu_*^{-1}\|\boldsymbol{f}\|_{-1}, 
\qquad
\|\textsf{p}\|_{0,\O} \leq \beta^{-1}\Lambda(\boldsymbol{f})\|\boldsymbol{f}\|_{-1},
\]
 where $\Lambda(\boldsymbol{f}) := 1 + \nu^*\nu_*^{-1} + C_N\nu_*^{-2}\|\boldsymbol{f}\|_{-1}+\textsf{C}^2\nu_*^{-1} + C_{F}\nu_*^{1-r}\|\boldsymbol{f}\|_{-1}^{r-2}$. If, in addition, $C_N\|\boldsymbol{f}\|_{-1} < \nu_*^2$, then there is a unique pair $(\bu,\textsf{p})\in\bV\times\Q$ that solves \eqref{def:weak_BF}.
\end{proposition}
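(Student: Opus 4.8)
The plan is to follow the classical strategy for saddle-point problems with a monotone-plus-compact nonlinearity: first solve for the velocity on the divergence-free subspace $\bZ$, and then recover the pressure from the inf-sup condition \eqref{eq:infsupcontinua}. Restricting the test functions to $\bZ$ eliminates the pressure and reduces \eqref{def:weak_BF} to finding $\bu\in\bZ$ with
\begin{equation*}
a_L(\bu,\bv)+c_N(\bu;\bu,\bv)+c_F(\bu;\bu,\bv)+d(\bu,\bv)=\langle\boldsymbol{f},\bv\rangle\qquad\forall\bv\in\bZ.
\end{equation*}
Testing with $\bv=\bu$ and using coercivity of $a_L$, the identity $c_N(\bu;\bu,\bu)=0$ from \eqref{eq:c_bound}, together with $c_F(\bu;\bu,\bu)=\int_\O|\bu|^r\ge 0$ and $d(\bu,\bu)=\|\bu\|_{0,\O}^2\ge 0$, gives $\nu_*|\bu|_{1,\O}^2\le\langle\boldsymbol{f},\bu\rangle\le\|\boldsymbol{f}\|_{-1}|\bu|_{1,\O}$, which already yields the velocity bound $|\bu|_{1,\O}\le\nu_*^{-1}\|\boldsymbol{f}\|_{-1}$.

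For existence I would use a divergence-free Galerkin scheme: choose a basis of $\bZ$, solve the finite-dimensional nonlinear system on $\bZ_m=\mathrm{span}\{\bv_1,\dots,\bv_m\}$, and produce a Galerkin solution $\bu_m$ from a standard corollary of Brouwer's fixed-point theorem, since the same energy computation shows the associated map points outward on a large enough sphere. The bound $|\bu_m|_{1,\O}\le\nu_*^{-1}\|\boldsymbol{f}\|_{-1}$ is uniform in $m$, so a subsequence converges weakly in $\bZ$ and, by the two-dimensional compact embedding $\H_0^1(\O)\hookrightarrow\L^s(\O)$ for every finite $s$, strongly in every $\L^s(\O)$. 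The hard part will be passing to the limit in the two nonlinear terms. For the convective term I would first rewrite $c_N(\bu_m;\bu_m,\bv)=-c_N(\bu_m;\bv,\bu_m)$ via skew-symmetry and then use that $\bu_m\otimes\bu_m\to\bu\otimes\bu$ in $\L^2(\O)$ (a product of two $\L^4$-strongly convergent factors); for the Forchheimer term I would use that the Nemytskii map $\bs\mapsto|\bs|^{r-2}\bs$ is bounded and continuous from $\L^r(\O)$ into $\L^{r'}(\O)$ with $r'=r/(r-1)$, so $|\bu_m|^{r-2}\bu_m\to|\bu|^{r-2}\bu$ in $\L^{r'}(\O)$ and pairs against $\bv\in\L^r(\O)$. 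The linear terms pass trivially, producing a solution $\bu\in\bZ$ of the reduced problem.

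To recover the pressure, I note that the residual functional $\bv\mapsto\langle\boldsymbol{f},\bv\rangle-a_L(\bu,\bv)-c_N(\bu;\bu,\bv)-c_F(\bu;\bu,\bv)-d(\bu,\bv)$ vanishes on $\bZ$, so by the inf-sup condition \eqref{eq:infsupcontinua} (the closed-range property equivalent to surjectivity of $\div$) there is a unique $\textsf{p}\in\Q$ for which $b(\bv,\textsf{p})$ equals this residual for all $\bv\in\bV$, completing the pair $(\bu,\textsf{p})$. The pressure bound then follows by estimating each form using its continuity together with $|\bu|_{1,\O}\le\nu_*^{-1}\|\boldsymbol{f}\|_{-1}$: the data term contributes $\|\boldsymbol{f}\|_{-1}$ (the leading $1$ in $\Lambda$), the $a_L$ term contributes $\nu^*\nu_*^{-1}\|\boldsymbol{f}\|_{-1}$, the term $d$ contributes $\textsf{C}^2\nu_*^{-1}\|\boldsymbol{f}\|_{-1}$ via Poincar\'e, the convective term contributes $C_N\nu_*^{-2}\|\boldsymbol{f}\|_{-1}^2$, and the Forchheimer term contributes $C_F\nu_*^{1-r}\|\boldsymbol{f}\|_{-1}^{r-1}$; dividing by $|\bv|_{1,\O}$, taking the supremum, and invoking \eqref{eq:infsupcontinua} gives exactly $\|\textsf{p}\|_{0,\O}\le\beta^{-1}\Lambda(\boldsymbol{f})\|\boldsymbol{f}\|_{-1}$.

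Finally, for uniqueness under $C_N\|\boldsymbol{f}\|_{-1}<\nu_*^2$, I would take two solutions $(\bu_1,\textsf{p}_1),(\bu_2,\textsf{p}_2)$, set $\bw=\bu_1-\bu_2\in\bZ$, subtract the reduced equations, and test with $\bw$. The key algebraic step is that expanding $c_N(\bu_1;\bu_1,\bw)-c_N(\bu_2;\bu_2,\bw)$ and cancelling the two vanishing terms $c_N(\bu_2;\bw,\bw)=c_N(\bw;\bw,\bw)=0$ leaves only $c_N(\bw;\bu_2,\bw)$. Using coercivity of $a_L$, $d(\bw,\bw)\ge 0$, and the monotonicity inequality $\int_\O(|\bu_1|^{r-2}\bu_1-|\bu_2|^{r-2}\bu_2)\cdot\bw\ge 0$ for the Forchheimer difference, I obtain $\nu_*|\bw|_{1,\O}^2\le -c_N(\bw;\bu_2,\bw)\le C_N|\bu_2|_{1,\O}|\bw|_{1,\O}^2$, whence $(\nu_*-C_N\nu_*^{-1}\|\boldsymbol{f}\|_{-1})|\bw|_{1,\O}^2\le 0$; the smallness hypothesis makes the coefficient strictly positive, forcing $\bw=\b0$, and then \eqref{eq:infsupcontinua} gives $\textsf{p}_1=\textsf{p}_2$.
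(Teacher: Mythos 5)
There is no in-paper proof to compare against: the authors state Proposition \ref{pro:existencia1} as a quoted result, citing \cite[Theorem 1, Proposition 2, and Theorem 3]{campana2024finite}, and give no argument of their own. Judged on its own merits, your proof is correct, and it is in fact the continuous-level analogue of the strategy the paper does employ for the \emph{discrete} problem in Theorem \ref{thm:existence}: a Brouwer-type corollary (\cite[Chap.~IV, Corollary 1.1]{MR851383}) applied on the divergence-free subspace, followed by recovery of the pressure through the inf-sup condition \eqref{eq:infsupcontinua}. All the key steps check out: the energy identity with $c_N(\bu;\bu,\bu)=0$, $c_F(\bu;\bu,\bu)\geq 0$, $d(\bu,\bu)\geq 0$ gives exactly the stated velocity bound; in two dimensions the compact embedding $\H_0^1(\O)\hookrightarrow \L^s(\O)$ for all finite $s$ makes the limit passage in both nonlinearities work by strong convergence alone (so no Minty monotonicity trick is needed for the Forchheimer term, only Nemytskii continuity $\L^r\to\L^{r/(r-1)}$, which is valid for $r\in[3,4]$); your term-by-term pressure estimate reproduces $\Lambda(\boldsymbol{f})$ exactly, including the $C_F\nu_*^{1-r}\|\boldsymbol{f}\|_{-1}^{r-2}$ contribution; and the uniqueness computation—expanding the convective difference so that only $c_N(\bw;\bu_2,\bw)$ survives, discarding the Forchheimer difference by monotonicity of $\bs\mapsto|\bs|^{r-2}\bs$ (the same fact the authors invoke at the discrete level via \cite[Chapter I, Lemma 4.4]{dibenedetto2012degenerate}), and using $C_N\|\boldsymbol{f}\|_{-1}<\nu_*^2$—is the classical Navier--Stokes-type argument. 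Two points you leave implicit but which are standard and worth stating: (i) in the Galerkin limit one fixes a test function in some $\bZ_j$, passes to the limit for $m\geq j$, and then uses density of $\bigcup_m \bZ_m$ in $\bZ$; (ii) arguing uniqueness on the reduced problem suffices because the second equation of \eqref{def:weak_BF} forces any solution to satisfy $\div\bu=0$, i.e.\ $\bu\in\bZ$.
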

% \begin{proof}
%  See \cite[Theorem 1, Proposition 2, and Theorem 3]{campana2024finite}.
% \end{proof}

\section{A nonlinear heat equation}\label{sec:heat}

We consider the following nonlinear heat equation in weak form: Given $g\in\H^{-1}(\O)$ and $\bv \in \bZ$, find $T\in\bVV$ such that
\begin{equation}\label{def:weak_H}
\mathfrak{a}(T;T,S)+\mathfrak{c}(\bv;T,S) = \left\langle g,S\right\rangle
\qquad
\forall S\in\bVV.
\end{equation}
Here, $\left\langle\cdot,\cdot\right\rangle$ denotes the duality pairing between $\H^{-1}(\O)$ and $\H^1(\O)$. The forms $\mathfrak{a}$ and $\mathfrak{b}$ are defined as follows:
\begin{align}
\label{eq:bilinear_form_fraka}
\mathfrak{a}:\bVV\times\bVV\times\bVV \rightarrow \mathbb{R}, \qquad \mathfrak{a}(X;R,S)&:= \displaystyle \int_{\O} \kappa(X)\nabla R \cdot \nabla S, 
\\ 
\label{eq:bilinear_form_frakc}
\mathfrak{c}:\bV\times\bVV\times\bVV \rightarrow \mathbb{R}, \qquad \mathfrak{c}(\bz;R,S)&:= \displaystyle \int_{\O} (\bz\cdot\nabla R)S.
\end{align}

We list some of the most important properties that these forms satisfy:

\begin{itemize}
\item Given $X \in \bVV$, the form $\mathfrak{a}(X;\cdot,\cdot)$ is coercive and continuous: 
% \EO{For every $X, S \in \bVV$, we have}
\begin{equation}\label{eq:atemp_bound}
\mathfrak{a}(X;S,S) \geq \kappa_{*}|S|_{1,\O}^2, 
\qquad \mathfrak{a}(X;R,S) \leq \kappa^*|R|_{1,\O}|S|_{1,\O}
\qquad
\forall R,S \in \bVV.
\end{equation}
\item $\mathfrak{c}(\cdot;\cdot,\cdot)$ is skew-symmetric: For every $\bz \in \bZ$ and $R,S \in \bVV$, we have
\begin{equation}\label{eq:ctemp_bound}
\mathfrak{c}(\bz;R,S) + \mathfrak{c}(\bz;S,R) = 0, 
\qquad \mathfrak{c}(\bz;S,S) = 0.
\end{equation}
In addition, $\mathfrak{c}(\cdot;\cdot,\cdot)$ is continuous: For every $\bz \in \bV$ and $R,S \in \bVV$, we have
\begin{equation}
\mathfrak{c}(\bz;R,S) 
\leq 
\mathfrak{C}|\bz|_{1,\O}|R|_{1,\O}|S|_{1,\O}.
\end{equation}

\end{itemize} 

\subsection{Existence, stability, and uniqueness results} 
We present existence, stability, and uniqueness results for problem \eqref{def:weak_H} \cite[Theorem 4 and Remark 6]{campana2024finite}. 

\begin{proposition}[existence, stability, and uniqueness]\label{pro:existencia2}
Let $\Omega\subset\mathbb{R}^2$ be an open and bounded domain with Lipschitz boundary $\partial\Omega$, and let $g\in\H^{-1}(\O)$. Then, there exists at least one solution $T\in\H_0^1(\O)$ of problem \eqref{def:weak_H} which satisfies 
\begin{equation*}
|T|_{1,\O} \leq \kappa_*^{-1}\|g\|_{-1}.
\end{equation*}
If, in addition, \eqref{def:weak_H} has a solution $T_1 \in \W_{\mathfrak{q}}^1(\O) \cap \H_0^1(\Omega)$ such that
\[
 \kappa_*^{-1}\kappa_{lip}C_{\mathfrak{p}\hookrightarrow 2}|T|_{\W_{\mathfrak{q}}^1(\O)} < 1,
\]
then problem \eqref{def:weak_H} has no other solution $T_2 \in \H_0^1(\Omega)$. Here, $C_{\mathfrak{p}\hookrightarrow 2}>0$ is the best constant in $\H_0^1(\O)\hookrightarrow\L^{\mathfrak{p}}(\O)$, where $\mathfrak{p} < \infty$, and $\mathfrak{q}$ is chosen such that $1/\mathfrak{p}+1/\mathfrak{q}=1/2$.
\end{proposition}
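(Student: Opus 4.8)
The plan is to split the statement into an existence-and-stability part, handled by a fixed-point argument, and a uniqueness part, handled by a direct energy estimate that exploits the Lipschitz continuity of $\kappa(\cdot)$ together with the H\"older exponent balance $1/\mathfrak{p}+1/\mathfrak{q}=1/2$.

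For existence I would freeze the temperature appearing inside the diffusion coefficient. Given $w\in\L^2(\O)$, note that $\kappa(w)\in\L^\infty(\O)$ with $\kappa_*\leq\kappa(w)\leq\kappa^*$ by assumption \textbf{A0}, and consider the \emph{linear} problem of finding $T\in\bVV$ with $\mathfrak{a}(w;T,S)+\mathfrak{c}(\bv;T,S)=\langle g,S\rangle$ for all $S\in\bVV$. Since $\bv\in\bZ$, the convective form is skew-symmetric, so $\mathfrak{c}(\bv;S,S)=0$ by \eqref{eq:ctemp_bound} and the combined bilinear form is coercive with constant $\kappa_*$ thanks to \eqref{eq:atemp_bound}; continuity follows likewise from \eqref{eq:atemp_bound} and the continuity of $\mathfrak{c}$. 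Lax--Milgram then yields a unique solution $T=:\Phi(w)$, and testing with $S=T$ gives the a priori bound $|T|_{1,\O}\leq\kappa_*^{-1}\|g\|_{-1}$, independent of $w$. This simultaneously establishes the claimed stability estimate and shows that $\Phi$ maps the closed ball $K:=\{w\in\L^2(\O):\|w\|_{0,\O}\leq\textsf{C}\kappa_*^{-1}\|g\|_{-1}\}$ into itself, where $\textsf{C}$ is the Poincar\'e constant of \eqref{eq:Poincare}.

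To produce a fixed point of $\Phi$, which is exactly a solution of \eqref{def:weak_H}, I would invoke the Schauder fixed-point theorem on $K$. Relative compactness of $\Phi(K)$ in $\L^2(\O)$ is immediate from the uniform $\bVV$-bound above and the compactness of the embedding $\H_0^1(\O)\hookrightarrow\L^2(\O)$ (Rellich--Kondrachov). \emph{The main obstacle is the continuity of $\Phi$}, and this is where the nonlinearity genuinely enters: if $w_n\to w$ in $\L^2(\O)$, then along a subsequence $w_n\to w$ almost everywhere, so $\kappa(w_n)\to\kappa(w)$ almost everywhere while staying bounded by $\kappa^*$; meanwhile the uniformly bounded solutions $T_n=\Phi(w_n)$ converge weakly in $\bVV$ and strongly in $\L^2(\O)$ to some $\tilde T$. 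To identify $\tilde T=\Phi(w)$ I would pass to the limit in $\int_\O\kappa(w_n)\nabla T_n\cdot\nabla S$ by rewriting it as $\int_\O\nabla T_n\cdot\bigl(\kappa(w_n)\nabla S\bigr)$ and observing that $\kappa(w_n)\nabla S\to\kappa(w)\nabla S$ strongly in $\L^2(\O)$ by dominated convergence, paired against the weak $\L^2$-convergence of $\nabla T_n$. Uniqueness of the linear solve plus a standard subsequence argument then upgrade this to full continuity of $\Phi$.

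For uniqueness, let $T_1\in\W_{\mathfrak{q}}^1(\O)\cap\H_0^1(\O)$ and $T_2\in\H_0^1(\O)$ both solve \eqref{def:weak_H}, set $e:=T_1-T_2$, and subtract the two equations. The key algebraic step is the splitting
\[
\mathfrak{a}(T_1;T_1,S)-\mathfrak{a}(T_2;T_2,S)=\int_\O\bigl(\kappa(T_1)-\kappa(T_2)\bigr)\nabla T_1\cdot\nabla S+\mathfrak{a}(T_2;e,S),
\]
after which testing with $S=e$, using $\mathfrak{c}(\bv;e,e)=0$ and the coercivity in \eqref{eq:atemp_bound}, gives
\[
\kappa_*|e|_{1,\O}^2\leq\kappa_{lip}\int_\O|e|\,|\nabla T_1|\,|\nabla e|.
\]
I would then estimate the right-hand side by the three-factor H\"older inequality with exponents $\mathfrak{p},\mathfrak{q},2$, which is admissible precisely because $1/\mathfrak{p}+1/\mathfrak{q}=1/2$, and bound $\|e\|_{\L^\mathfrak{p}(\O)}\leq C_{\mathfrak{p}\hookrightarrow 2}|e|_{1,\O}$ by the Sobolev embedding together with $\|\nabla T_1\|_{\L^\mathfrak{q}(\O)}=|T_1|_{\W_{\mathfrak{q}}^1(\O)}$. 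This yields
\[
\kappa_*\bigl(1-\kappa_*^{-1}\kappa_{lip}C_{\mathfrak{p}\hookrightarrow 2}|T_1|_{\W_{\mathfrak{q}}^1(\O)}\bigr)|e|_{1,\O}^2\leq 0,
\]
so the smallness hypothesis forces $|e|_{1,\O}=0$, and Poincar\'e's inequality \eqref{eq:Poincare} gives $T_1=T_2$.
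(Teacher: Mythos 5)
Your proposal is correct, but note that the paper itself offers no proof of this proposition: it is quoted verbatim as a known result, with the argument deferred to \cite[Theorem 4 and Remark 6]{campana2024finite}. So the comparison here is between your self-contained argument and a citation. Your two-part strategy is the natural one and is visibly consistent with the structure of the statement: the uniqueness constant $\kappa_*^{-1}\kappa_{lip}C_{\mathfrak{p}\hookrightarrow 2}|T_1|_{\W_{\mathfrak{q}}^1(\O)}$ is exactly what falls out of your energy estimate with the three-exponent H\"older split $(\mathfrak{p},\mathfrak{q},2)$, and the stability bound $|T|_{1,\O}\le\kappa_*^{-1}\|g\|_{-1}$ is exactly the Lax--Milgram bound for the frozen-coefficient problem, uniform in the frozen argument --- which is what makes the Schauder set-up work. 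Both halves are sound: coercivity of $\mathfrak{a}(w;\cdot,\cdot)+\mathfrak{c}(\bv;\cdot,\cdot)$ uses only \textbf{A0}) and the skew-symmetry \eqref{eq:ctemp_bound} (valid since $\bv\in\bZ$), the compactness of $\Phi(K)$ follows from Rellich--Kondrachov, and your weak-times-strong limit passage for $\int_\O\kappa(w_n)\nabla T_n\cdot\nabla S$ is the right way to handle the nonlinear coefficient. Two small points worth tightening. First, in the continuity-of-$\Phi$ step you must also pass to the limit in the convective term $\mathfrak{c}(\bv;T_n,S)=\int_\O\nabla T_n\cdot(\bv S)$; this is immediate since $\bv S\in\mathbf{L}^2(\O)$ (both factors lie in $\L^4(\O)$ in two dimensions) and $\nabla T_n$ converges weakly, but it should be said, because the identification of the limit as $\Phi(w)$ uses the full equation. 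Second, you silently (and correctly) read the smallness hypothesis as involving $|T_1|_{\W_{\mathfrak{q}}^1(\O)}$ rather than the $|T|_{\W_{\mathfrak{q}}^1(\O)}$ written in the statement, which is a typo in the paper; flagging that explicitly would avoid any ambiguity about which solution carries the extra regularity.
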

% \begin{proof}
%  See \cite[Theorems 4 and 5]{campana2024finite}.
% \end{proof}

\section{The coupled problem}\label{sec:BFH}
We now introduce a weak formulation for the system \eqref{def:BFH} and review existence and uniqueness results. The weak formulation is as follows: Given $\boldsymbol{f}\in[\H^{-1}(\O)]^2$ and $g\in\H^{-1}(\O)$, find $(\bu,\textsf{p},T)\in\bV\times\Q\times\bVV$ such that
\begin{equation}\label{def:weak_BFH}
\left\{\begin{array}{rcl}
a(T;\bu,\bv) + c_N(\bu;\bu,\bv)+c_{F}(\bu;\bu,\bv)+ d(\bu,\bv) + b(\bv,\textsf{p})&=& \left\langle\boldsymbol{f},\bv\right\rangle,
\\
b(\bu,\textsf{q}) &=& 0, \\
\mathfrak{a}(T;T,S)+\mathfrak{c}(\bu;T,S) &=& \left\langle g,S\right\rangle,
\end{array}\right.
\end{equation}
for all $(\bv,\textsf{q},S) \in \bV\times\Q\times\bVV$. The forms $c_N(\cdot; \cdot,\cdot)$, $c_F(\cdot; \cdot,\cdot)$, $d(\cdot,\cdot)$, and $b(\cdot,\cdot)$ are defined in Section \ref{sec:BF} while $\mathfrak{a}(\cdot; \cdot,\cdot)$ and $\mathfrak{c}(\cdot; \cdot,\cdot)$ are defined in Section \ref{sec:heat}. The form $a$ is defined as follows:
\begin{equation}
a:\bVV\times\bV\times\bV \rightarrow \mathbb{R},
\qquad
a(X;\bv,\bw):=\displaystyle \int_{\O} \nu(X)\nabla \bv : \nabla \bw.
\label{eq:bilinear_form_a}
\end{equation}

Since $\nu$ satisfies the properties in \textbf{A1}), it is immediate that, given $X \in \bVV$,
\begin{equation}\label{eq:a_coer_new}
a(X;\bv,\bv) \geq \nu_{*}|\bv|_{1,\O}^2, 
\qquad 
a(X;\bv,\bw) \leq \nu^*|\bv|_{1,\O}|\bw|_{1,\O}
\qquad
\forall \bv,\bw \in \bV.
\end{equation}

\begin{remark}[skew-symmetry]\rm
Let $\bz \in \bZ$. Define the forms $c_N^{\textit{skew}}(\bz ; \cdot, \cdot): \bV \times \bV \rightarrow \mathbb{R}$ and $\mathfrak{c}^{\textit{skew}}(\bz ; \cdot, \cdot): \bVV \times \bVV \rightarrow \mathbb{R}$ by
$
c_N^{\textit{skew}}(\bz;\bv,\bw) := 1/2 [c_N(\bz;\bv,\bw)-c_N(\bz;\bw,\bv)]
$
and
$
\mathfrak{c}^{\textit{skew}}(\bz;R,S) := 1/2[\mathfrak{c}(\bz;R,S)-\mathfrak{c}(\bz;S,R)]
$, 
respectively. As discussed in Sections \ref{sec:BF} and \ref{sec:heat}, $c_N$ and $\mathfrak{c}$ are skew-symmetric. As a result, $c_N(\bz;\bv,\bw) = c_N^{\textit{skew}}(\bz;\bv,\bw)$ for every $\bv,\bw \in \bV$ and $\mathfrak{c}(\bz;R,S) = \mathfrak{c}^{\textit{skew}}(\bz;R,S)$ for every $R,S \in \bVV$. This observation will be important for the development of a virtual element numerical scheme.
\label{rem:skew-symmetry}
\end{remark}

\subsection{Existence, stability, and uniqueness results}
We review the existence of solutions for the system \eqref{def:weak_BFH} without restriction on the data and a global uniqueness result when the data is suitably restricted and the solution is slightly smoother \cite[Theorems 8 and 10]{campana2024finite}.

\begin{proposition}[existence, stability, and uniqueness]\label{pro:existence3}
Let $\O\subset \mathbb{R}^2$ be an open and bounded domain with Lipschitz boundary $\partial\O$, let $\boldsymbol{f}\in[\H^{-1}(\O)]^2$, and let $g \in \H^{-1}(\Omega)$. Let $\nu$ and $\kappa$ be as in Section \ref{sec:data}. Then, the nonlinear system \eqref{def:weak_BFH} has at least one solution $(\bu,\textsf{p},T) \in \bV\times\Q\times\bVV$. Moreover, we have
\[
|\bu|_{1,\O} \leq \nu_*^{-1}\|\boldsymbol{f}\|_{-1}, 
\quad 
\|\textsf{p}\|_{0,\O} \leq \beta^{-1}\Lambda(\boldsymbol{f})\|\boldsymbol{f}\|_{-1},
\quad
|T|_{1,\O} \leq \kappa_*^{-1}\|g\|_{-1},
\]
where $\Lambda(\boldsymbol{f})$ is defined in the statement of Proposition \ref{pro:existencia1}.  Furthermore, if the system \eqref{def:weak_BFH} has a solution $(\bu_1,\textsf{p}_1,T_1) \in (\bV\cap \mtW_{2+\varepsilon}^1(\O))\times\Q\times(\bVV\cap\W_{2+\varepsilon}^1(\O))$ for some $\varepsilon > 0$ such that
\begin{align*}
\dfrac{C_N}{\nu_*^2}\|\boldsymbol{f}\|_{-1} + \dfrac{\nu_{lip}\mathcal{C}_{\varepsilon}\mathcal{C}^2_{4 \rightarrow 2}\|g\|_{-1}|\bu_1|_{\mtW_{2+\varepsilon}^1(\O)}}{\nu_*\kappa_*(\kappa_*-\kappa_{lip}C_{\epsilon}|T_1|_{\W_{2+\varepsilon}^1(\O)})} &< 1,
\\
\kappa_{lip}\mathcal{C}_{\epsilon}|T_1|_{\W_{2+\varepsilon}^1(\O)} &< \kappa_*,
\end{align*}
then the system \eqref{def:weak_BFH} has no other solution $(\bu_2,\textsf{p}_2,T_2)\in\bV\times\Q\times\bVV$. Here, $\mathcal{C}_{4\hookrightarrow 2}$ and $\mathcal{C}_{\varepsilon}$ denote the best constant in $\bVV \hookrightarrow \L^4(\O)$ and $\bVV \hookrightarrow \L^{2(2+\varepsilon)/\varepsilon}(\O)$ respectively.
\end{proposition}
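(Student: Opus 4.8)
The plan is to obtain existence together with the stability bounds by a decoupling fixed-point argument built on Propositions~\ref{pro:existencia1} and \ref{pro:existencia2}, and then to establish uniqueness by a contraction estimate on the difference of two solutions. For the first part I would introduce a solution map $\Phi$ as follows: given $\widehat{T}\in\bVV$, I apply Proposition~\ref{pro:existencia1} with $a_L$ replaced by $a(\widehat{T};\cdot,\cdot)$ — which by \eqref{eq:a_coer_new} is coercive and continuous with the very same constants $\nu_*,\nu^*$ — to obtain $(\bu,\textsf{p})\in\bV\times\Q$ solving the momentum/incompressibility block, with $\bu\in\bZ$ and $|\bu|_{1,\O}\le\nu_*^{-1}\|\boldsymbol{f}\|_{-1}$; feeding this $\bu$ into Proposition~\ref{pro:existencia2} yields $T\in\bVV$ with $|T|_{1,\O}\le\kappa_*^{-1}\|g\|_{-1}$, and I set $\Phi(\widehat{T}):=T$. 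With this construction the three stability bounds in the statement are automatic for any fixed point, the pressure bound being read directly from Proposition~\ref{pro:existencia1} through $\Lambda(\boldsymbol{f})$.

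To run a fixed-point theorem I would view $\Phi$ as acting on $\L^2(\O)$, using the boundedness of $\nu(\cdot)$ from \textbf{A1}) to make sense of $a(\widehat{T};\cdot,\cdot)$ for merely $\L^2$ data. The a priori bound shows $\Phi$ maps the ball $\{\,|\widehat{T}|_{1,\O}\le\kappa_*^{-1}\|g\|_{-1}\,\}$ into itself, and since its range lies in $\bVV$, composition with the compact embedding $\bVV\hookrightarrow\hookrightarrow\L^2(\O)$ makes $\Phi$ compact. The only delicate point is continuity: for $\widehat{T}_n\to\widehat{T}$ in $\L^2$ one passes to a subsequence with $\nu(\widehat{T}_n)\to\nu(\widehat{T})$ a.e., extracts weak limits of the bounded velocities, and identifies the limit in the Forchheimer term via monotonicity of $\bv\mapsto|\bv|^{r-2}\bv$ (Minty's trick); the heat block is analogous. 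Because the decoupled Brinkman--Forchheimer solver need not be single-valued in the unrestricted-data regime, one works with the set-valued map and invokes the Kakutani--Glicksberg--Fan theorem (equivalently, one argues directly by Galerkin approximation and compactness), producing a fixed point and hence a solution of \eqref{def:weak_BFH}.

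For uniqueness I would take solutions $(\bu_i,\textsf{p}_i,T_i)$, $i=1,2$, with $(\bu_1,T_1)$ carrying the extra $\mtW_{2+\varepsilon}^1(\O)$/$\W_{2+\varepsilon}^1(\O)$ regularity, set $\bu:=\bu_1-\bu_2$, $T:=T_1-T_2$, and split the momentum differences as $a(T_1;\bu_1,\bv)-a(T_2;\bu_2,\bv)=a(T_2;\bu,\bv)+\int_{\O}(\nu(T_1)-\nu(T_2))\nabla\bu_1:\nabla\bv$ and $c_N(\bu_1;\bu_1,\bv)-c_N(\bu_2;\bu_2,\bv)=c_N(\bu_1;\bu,\bv)+c_N(\bu;\bu_2,\bv)$. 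Testing with $\bv=\bu\in\bZ$: the pressure term drops, $c_N(\bu_1;\bu,\bu)=0$ by \eqref{eq:c_bound}, and the Forchheimer difference paired with $\bu$ together with $d(\bu,\bu)$ are nonnegative and discarded. The crucial estimate is the viscosity defect, bounded by Lipschitz continuity of $\nu$ and a three-factor Hölder inequality with exponents $2(2+\varepsilon)/\varepsilon$, $2+\varepsilon$, $2$:
\[
\Big|\int_{\O}(\nu(T_1)-\nu(T_2))\,\nabla\bu_1:\nabla\bu\Big|\le\nu_{lip}\,\mathcal{C}_{\varepsilon}\,|T|_{1,\O}\,|\bu_1|_{\mtW_{2+\varepsilon}^1(\O)}\,|\bu|_{1,\O},
\]
while $|c_N(\bu;\bu_2,\bu)|\le C_N|\bu_2|_{1,\O}|\bu|_{1,\O}^2\le C_N\nu_*^{-1}\|\boldsymbol{f}\|_{-1}|\bu|_{1,\O}^2$. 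Coercivity of $a(T_2;\cdot,\cdot)$ and division by $\nu_*$ give
\[
\big(1-C_N\nu_*^{-2}\|\boldsymbol{f}\|_{-1}\big)\,|\bu|_{1,\O}\le\nu_*^{-1}\nu_{lip}\,\mathcal{C}_{\varepsilon}\,|\bu_1|_{\mtW_{2+\varepsilon}^1(\O)}\,|T|_{1,\O}.
\]

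An entirely parallel computation on the heat block — splitting the diffusion difference as $\mathfrak{a}(T_2;T,S)+\int_{\O}(\kappa(T_1)-\kappa(T_2))\nabla T_1\cdot\nabla S$ and the convection difference as $\mathfrak{c}(\bu_1;T,S)+\mathfrak{c}(\bu;T_2,S)$, testing with $S=T$, using $\mathfrak{c}(\bu_1;T,T)=0$ from \eqref{eq:ctemp_bound}, bounding the diffusion defect by $\kappa_{lip}\mathcal{C}_{\varepsilon}|T_1|_{\W_{2+\varepsilon}^1(\O)}|T|_{1,\O}^2$ and the cross term through the $\L^4$ embedding as $|\mathfrak{c}(\bu;T_2,T)|\le\mathcal{C}_{4\hookrightarrow2}^2\kappa_*^{-1}\|g\|_{-1}|\bu|_{1,\O}|T|_{1,\O}$ — yields, under the hypothesis $\kappa_{lip}\mathcal{C}_{\varepsilon}|T_1|_{\W_{2+\varepsilon}^1(\O)}<\kappa_*$,
\[
|T|_{1,\O}\le\frac{\mathcal{C}_{4\hookrightarrow2}^2\,\|g\|_{-1}}{\kappa_*\big(\kappa_*-\kappa_{lip}\,\mathcal{C}_{\varepsilon}\,|T_1|_{\W_{2+\varepsilon}^1(\O)}\big)}\,|\bu|_{1,\O}.
\]
Substituting this into the velocity estimate collapses everything to $(1-\Theta)|\bu|_{1,\O}\le0$, where $\Theta$ is exactly the left-hand side of the first smallness hypothesis; since $\Theta<1$ I conclude $\bu=\boldsymbol{0}$, hence $T=0$, and finally $\textsf{p}_1=\textsf{p}_2$ by the inf-sup condition \eqref{eq:infsupcontinua}. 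The main obstacle is the uniqueness step: one must decompose the defects so that the \emph{smooth} solution $(\bu_1,T_1)$ carries the gradient in the Lipschitz estimates, and the extra $\mtW_{2+\varepsilon}^1(\O)$ integrability is precisely what closes the three-factor Hölder bound and turns the coupled system of two estimates into a genuine contraction.
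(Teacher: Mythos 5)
The paper itself does not prove Proposition \ref{pro:existence3}: it is quoted from \cite[Theorems 8 and 10]{campana2024finite}, so the natural internal benchmarks are the paper's discrete analogues, Theorems \ref{thm:existence} and \ref{thm:uniqueness}. Measured against these, your uniqueness half is correct and is essentially the expected argument: you split each nonlinear defect so that the \emph{smooth} solution $(\bu_1,T_1)$ carries the gradient, test with $\bu=\bu_1-\bu_2\in\bZ$ and $T=T_1-T_2$, discard the monotone Forchheimer and $d$ contributions, use skew-symmetry of $c_N$ and $\mathfrak{c}$, and close the two coupled estimates with the three-factor H\"older inequality with exponents $2(2+\varepsilon)/\varepsilon$, $2+\varepsilon$, $2$. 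This reproduces exactly the smallness constants of the statement and mirrors, at the continuous level, Steps 1--3 of the proof of Theorem \ref{thm:uniqueness}. The stability bounds and the recovery of the pressure via \eqref{eq:infsupcontinua} are also handled correctly.

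The genuine gap is in the existence half. Your primary route defines $\Phi(\widehat{T})$ by solving the \emph{full nonlinear} Brinkman--Forchheimer block for frozen temperature; since that sub-problem need not have a unique solution without a restriction on the data (a point you correctly notice), you pass to a set-valued map and invoke Kakutani--Glicksberg--Fan. But that theorem requires the images $\Phi(\widehat{T})$ to be nonempty, compact, and \emph{convex}, and the solution set of a nonlinear problem containing $c_N$ and $c_F$ is not convex in general --- no argument is available to make it so --- hence this step fails as stated. The standard repair, and the route the paper itself takes at the discrete level (Theorem \ref{thm:existence}, via \cite[Chap.~IV, Corollary 1.1]{MR851383}), is to avoid decoupling altogether: work on the product space $\bZ\times\bVV$ (or its Galerkin subspaces), define a single operator $\mathrm{A}_{(\bu,T)}(\bv,S)$ collecting all terms of both equations, verify $\mathrm{A}_{(\bu,T)}(\bu,T)\geq 0$ on a sphere whose radius is determined by the data (using skew-symmetry and the nonnegativity of $c_F$ and $d$), obtain a solution by the Brouwer-type corollary, pass to the limit by compactness (Minty's trick handles the Forchheimer term there), and recover the pressure from \eqref{eq:infsupcontinua}. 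Your parenthetical ``one argues directly by Galerkin approximation and compactness'' points at this, but it is precisely the part that carries the burden of the proof. Alternatively, your decoupling can be saved by linearizing the sub-problem (freeze $\widehat{\bu}\in\bZ$ in $c_N$ and $c_F$, so the frozen form is still coercive and the sub-solver is single-valued), after which Schauder's theorem applies without any set-valued machinery.
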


\section{A virtual  element approximation}
\label{sec:vem}
In this section, we introduce a virtual element approximation for the nonlinear system \eqref{def:weak_BFH} and derive error estimates. For this purpose, we first introduce some notions and basic ingredients \cite{MR2997471,MR4586821}. From now on, we assume that $\Omega$ is a polygonal and bounded domain with Lipschitz boundary.

Let $\{\CT_h\}_{h>0}$ be a sequence of decompositions of $\Omega$ into general polygonal elements $E$. Here, $h:=\max \{ h_E: E\in\CT_h \}$ and $h_E$ denotes the the diameter of the element $E$. We denote by $|E|$ the area of the element $E \in \CT_h$.

\subsection{Mesh regularity}
We make the following assumptions on the sequence $\{\CT_h\}_{h>0}$ \cite{MR2997471,MR3626409,MR3796371,AVV,MR4586821}: There exists $\varrho > 0$ such that for all $h$ and every $E$ in $\CT_h$,
\begin{itemize}
\item[\textbf{A2})] $E$ is star-shaped with respect to a ball $B_\textsf{r}$ with radius $\textsf{r} \geq \varrho h_{E}$ \cite[assumption \textbf{A0.2}]{MR2997471} and
\item[\textbf{A3})] the distance between any two vertices of $E$ is $\ell \geq \varrho h_{E}$ \cite[assumption \textbf{A0.3}]{MR2997471}.
\end{itemize}

\subsection{Basic ingredients}
Let $k \in \mathbb{N}$, $t \in \mathbb{R}^{+}$, and let $p \in [1,+\infty]$. Let $A$ be an open and bounded domain in $\mathbb{R}^2$. We introduce some basic spaces that will be useful later. First, we introduce $\mathbb{P}_{k}(A)$ --- the set of polynomials on $A$ of degree less than or equal to $k$ ---  with the convention that $\mathbb{P}_{-1}(A) = \{ 0 \}$. Secondly, $\mathbb{P}_k(\mathcal{T}_h) = \{q \in \L^2(\O) :  q|_{E} \in \mathbb{P}_{k}(E) \; \forall E \in \mathcal{T}_h\}$. Finally, we introduce the space $\W^{t}_{p}(\mathcal{T}_h) = \{ v \in \L^{2}(\O) : \, v|_{E} \in \W^{t}_{p}(E) \; \forall E \in \mathcal{T}_h\}$. We equip the space $\W^{t}_{p}(\mathcal{T}_h)$ with the broken norm $\| \cdot \|_{\W_p^t(\CT_h)}$ and the broken seminorm $| \cdot |_{\W_p^t(\CT_h)}$, which are defined by
\begin{equation*}
\begin{array}{ll}
\|v\|_{\W_p^t(\CT_h)} := \left[ \displaystyle \sum_{E\in\CT_h} \|v\|_{\W_p^t(E)}^{p} \right]^{\frac{1}{p}}, 
\quad 
|v|_{\W_p^t(\CT_h)} := \left[ \displaystyle \sum_{E\in\CT_h} |v|_{\W_p^t(E)}^{p} \right]^{\frac{1}{p}}, 
\quad
p \in [1, +\infty),
\end{array}
\end{equation*}
respectively. If $p = + \infty$, then $\|v\|_{\W_{\infty}^t(\CT_h)} :=\max\{ \|v\|_{\W_{\infty}^t(E)}: E\in\CT_h \}$. We denote the vector-valued counterparts of $\mathbb{P}_{k}(A)$, $\mathbb{P}_k(\mathcal{T}_h)$, $\W^{t}_{p}(\mathcal{T}_h)$ by $[\mathbb{P}_k(A)]^2$, $[\mathbb{P}_k(\CT_h)]^2$, and $\mtW^{t}_{p}(\mathcal{T}_h)$, respectively.

For an element $E\in\CT_h$, we denote by $\textbf{x}_E$ the centroid of $E$. For a multi-index $\boldsymbol{\alpha} = (\alpha_1,\alpha_2) \in \mathbb{N}^2$, we define $|\boldsymbol{\alpha}| = \alpha_1 + \alpha_2$. Let $n \in \mathbb{N}$. A natural basis associated to the space $\mathbb{P}_n(E)$ is the set of normalized monomials
\begin{equation}
\mathbb{M}_{n}(E) := \left\lbrace 
\mathbf{m}_{\boldsymbol{\alpha}}: \boldsymbol{\alpha} \in \mathbb{N}^2, \, |\boldsymbol{\alpha}| \leq n \right\rbrace,
\qquad
\mathbf{m}_{\boldsymbol{\alpha}}(\mathbf{x}) := \left(\dfrac{\mathbf{x}-\textbf{x}_E}{h_E}\right)^{\boldsymbol{\alpha}}.
% \left(\dfrac{x_1-x_{E,1}}{h_E}\right)^{\alpha_1}\left(\dfrac{x_2-x_{E,2}}{h_E}\right)^{\alpha_2}.
\end{equation}
Let $m \in \mathbb{N}$ be such that $m \leq n$. We also introduce
\begin{equation}
\mathbb{P}_{n\backslash m}(E) := \textrm{span}\left\lbrace 
\mathbf{m}_{\boldsymbol{\alpha}}: 
\boldsymbol{\alpha} \in \mathbb{N}^2, \, m + 1 \leq |\boldsymbol{\alpha}| \leq n \right\rbrace.
\end{equation}

\subsection{Projections}
In this section we introduce some appropriate projections that will be useful for our analysis. Let $E \in \CT_h$, and let $n \in \mathbb{N} \cup \{0\}$. We introduce the $\L^{2}(E)$-orthogonal projection as follows:
\[
 \Pi^{0,E}_{n}: \L^2(E) \rightarrow \mathbb{P}_n(E):
 \quad
 (q_n, v  - \Pi^{0,E}_{n}v )_{\L^2(E)} = 0 \,\, \forall q_{n} \in \mathbb{P}_{n}(E).
\]
We denote by $\boldsymbol{\Pi}^{0,E}_{n}$ the vector-valued couterpart of $\Pi^{0,E}_{n}$. We also introduce the $\H^{1}(E)$-seminorm projection, which is defined as follows:
\[
 \Pi^{\nabla,E}_{n}: \H^1(E) \rightarrow \mathbb{P}_n(E):
 \quad
 (\nabla q_n, \nabla(v  - \Pi^{\nabla,E}_{n}v) )_{\L^2(E)} = 0 \,\, \forall q_{n} \in \mathbb{P}_{n}(E)
\]
and $\int_{\partial E} (v  - \Pi^{\nabla,E}_{n}v) = 0$. We denote by $\boldsymbol{\Pi}^{\nabla,E}_{n}$ the vector-valued couterpart of $\Pi^{0,E}_{n}$.

\subsection{Virtual element spaces for the velocity variable}

Following \cite{MR3626409,MR3796371}, we introduce a finite-dimensional local virtual space for $E \in \CT_h$:
\begin{multline*}
\bWh(E):= \{\bv_h \in [\H^{1}(E)]^{2} \cap [\mathcal{C}^{0}(\partial E)]^{2} \colon -\Delta \bv_h - \nabla \textsf{q} \in \mathbf{x}^{\perp}\mathbb{P}_{k-1}(E),\; \text{for}\,\text{some} \, \textsf{q} \in \L_0^2(E), \\
\div \bv_h \in \mathbb{P}_{k-1}(E), \; \bv_h|_{e} \in [\mathbb{P}_k(e)]^{2} \; \forall e \subset \partial E\},
\qquad
\mathbf{x}^{\perp} := (x_{2},-x_{1}).
\end{multline*}
With this space at hand, we introduce $\bVh(E)$ as a restriction of the space $\bWh(E)$:
\begin{equation*}
\bVh(E) := \{\bv_h \in \bWh(E):\, (\bv_h - \bPi^{\nabla,E}_k\bv_h,\mathbf{x}^{\perp}\textbf{q}_{k-1})_{\mathbf{L}^2(E)} = 0 \, \forall \textbf{q}_{k-1} \in \mathbb{P}_{k-1\backslash k-3}(E)\}.
\end{equation*}
A similar definition of $\bV_h(E)$ can be found in \cite[definition (3.7)]{AVV}; compare with \cite[definition (17)]{MR3796371}.

We recall some properties of the virtual space $\bVh(E)$ \cite{MR3626409,MR3737081}, which are based on the description in \cite[Section 3.2]{AVV} and \cite[Proposition 3.1]{MR3796371}.

\begin{itemize}
 \item [\textbf{(P1)}] \textbf{Polynomial inclusion:} $[\mathbb{P}_k(E)]^2 \subseteq \bVh(E)$;
 \item [\textbf{(P2)}] \textbf{Degrees of freedom (DoFs):} the following linear operators $\mathbf{D}_{\mathbf{V}}$ constitute a set of DoFs for the virtual element space $\bVh(E)$:
 \begin{itemize}
 \item [$\mathbf{D}_{\mathbf{V}} \mathbf{1}$] the values of $\mathbf{v}_h$ at the vertices of $E$,
 \item [$\mathbf{D}_{\mathbf{V}} \mathbf{2}$] the values of $\bv_h$ at $k-1$ distinct points of every edge $e \in \partial E$,
 \item [$\mathbf{D}_{\mathbf{V}} \mathbf{3}$] the moments of $\bv_h$
 \begin{equation*}
\dfrac{1}{|E|}\displaystyle \int_{E} \bv_h\cdot\mathbf{m}^{\perp} \mathbf{m}_{\boldsymbol{\alpha}} \quad \forall \mathbf{m}_{\boldsymbol{\alpha}} \in \mathbb{M}_{k-3}(E), 
\qquad 
\mathbf{m}^{\perp} = \dfrac{\mathbf{x}^{\perp}-\mathbf{x}_E^{\perp}}{h_E}.
\end{equation*}
\item [$\mathbf{D}_{\mathbf{V}} \mathbf{4}$] the moments of $\div \bv_h$
\begin{equation*}
\dfrac{h_{E}}{|E|} \displaystyle \int_{E} \div\bv_h \mathbf{m}_{\boldsymbol{\alpha}}
\quad 
\forall \mathbf{m}_{\boldsymbol{\alpha}} \in \mathbb{M}_{k-1}(E), 
\quad
|\boldsymbol{\alpha}| = \alpha_{1}+\alpha_{2} > 0.
\end{equation*}
 \end{itemize}
\end{itemize}

As a final ingredient, we introduce $\bXi_{k-1}^{0,E}: \boldsymbol{\nabla} \bVh(E) \rightarrow [\mathbb{P}_{k-1}(E)]^{2 \times 2}$ such that
\begin{equation*}
(\textbf{q}_{k-1},\nabla \bv_h - \bXi^{\nabla,E}_{k-1} \nabla \bv_h)_{\mathbf{L}^2(E)} = 0 \,\, \forall \textbf{q}_{k-1} \in [\mathbb{P}_{k-1}(E)]^{2 \times 2}.
\end{equation*}

The following remark is in order.

\begin{remark}[computability]\rm
The DoFs $\mathbf{D}_{\mathbf{V}}$ allow an exact computability of
\[
\bPi^{0,E}_k: \bVh(E) \rightarrow [\mathbb{P}_k(E)]^{2},
\qquad
\bXi_{k-1}^{0,E}: \boldsymbol{\nabla} \bVh(E) \rightarrow [\mathbb{P}_{k-1}(E)]^{2 \times 2}
\]
in the following sense: Given $\mathbf{v}_h \in \bVh(E)$, $\bPi^{0,E}_k \mathbf{v}_h$ and $\bXi_{k-1}^{0,E} \nabla \mathbf{v}_h$ can be computed using only, as unique information, the DoFs values $\mathbf{D}_{\mathbf{V}}$ of $\mathbf{v}_h$ \cite[Proposition 3.2]{MR3796371}.
\label{rk:computability_1}
\end{remark}

Finally, we define the \emph{global velocity space} $\bVh$ as follows \cite{MR3626409,MR3796371,AVV}:
\begin{equation*}
\bVh := \{\bv_h \in \bV : \, \bv_h|_{E} \in \bVh(E) \, \forall E \in \CT_h\}.
\end{equation*}

\subsection{Finite element space for the pressure variable}

We introduce the following finite element space for approximating a pressure variable:
\begin{equation*}
\Q_h := \{\textsf{q}_h \in \Q \colon \, \textsf{q}_h|_{E} \in \mathbb{P}_{k-1}(E) \, \forall E \in \CT_h\}.
\end{equation*}
It is important to note that the pair $(\bVh,\Q_h)$ satisfies the following discrete inf-sup condition: there exists a constant $\tilde{\beta}>0$ independent of $h$
such that
\begin{equation}\label{eq:infsupdiscreta}
\underset{\bv_h \in \bVh}{\sup} \; \dfrac{b(\bv_h,\textsf{q}_h)}{|\bv_h|_{1,\O}} \geq \tilde{\beta} \|\textsf{q}_h\|_{0,\O} \quad \forall \textsf{q}_h \in \Q_h;
\end{equation}
\cite[Proposition 3.4]{MR3796371}. Let us now introduce the discrete kernel
\begin{equation}
\bZh := \{ \bv_h \in \bVh: \, b(\bv_h,\textsf{q}_h) = 0 \,\, \forall \textsf{q}_h \in \Q_h\}. 
\label{eq:Z_h}
\end{equation}

The following observation is important:

\begin{remark}[divergence-free]\rm
Let $\bv_h \in \bVh$. Given that $\div \bv_h|_E \in \mathbb{P}_{k-1}(E)$, we deduce that $\mathbf{Z}_h \subseteq \mathbf{Z}$: the functions in the discrete kernel are exactly divergence-free.
\label{rem:divergence_free}
\end{remark}

\subsection{Virtual element spaces for the temperature variable}

To introduce a virtual element space for approximating a temperature variable, we first introduce, for each $E \in \CT_h$, the finite-dimensional local virtual space \cite[definition (16)]{MR4567234}
\begin{align*}
\bUh(E):= \{S_h \in \H^{1}(E) \cap \mathcal{C}^{0}(\partial E) \colon \Delta S_h \in\mathbb{P}_{k}(E),\; S_h|_{e} \in \mathbb{P}_k(e) \; \forall e \in \partial E\}.
\end{align*}
With this space at hand, on each $E \in \CT_h$ we introduce \cite[definition (3.13)]{AVV}
\begin{equation*}
\bVVh(E) := \{S_h \in \bUh(E): \, (S_h - \Pi^{\nabla,E}_k S_h, q_k)_{\L^2(E)} = 0 \,\, \forall q_k \in \mathbb{P}_{k\setminus k-2}(E) \}.
\end{equation*}

We recall some properties of the space $\bVVh(E)$ based on the presentation in \cite[Section 3.3]{AVV} and \cite[Section 2]{MR4567234}:

\begin{itemize}
\item[\textbf{(P3)}] \textbf{Polynomial inclusion}: $\mathbb{P}_k(E) \subseteq \bVVh(E)$;
\item[\textbf{(P4)}] \textbf{Degrees of freedom}: the following linear operators $D_V$ constitute a set of DoFs for the virtual element space $\bVVh(E)$:
\begin{itemize}
\item[$D_V1$] the values of $S_h$ at the vertices of $E$,
\item[$D_V2$] the values of $S_h$ at $k-1$ distinct points of every edge $e \in \partial E$,
\item[$D_V3$] the moments of $S_h$
\begin{equation*}
\dfrac{1}{|E|} \displaystyle \int_{E} S_h \textrm{m}_{\boldsymbol{\alpha}} \quad \forall \textrm{m}_{\boldsymbol{\alpha}} \in \mathbb{M}_{k-2}(E).
\end{equation*}
\end{itemize}
\end{itemize}

\begin{remark}[computability]\rm
The DoFs $D_V$ allow an exact computability of 
\[
 \Pi^{0,E}_k: \bVV_h(E) \rightarrow \mathbb{P}_k(E),
 \qquad
 \bPi^{0,E}_{k-1}: \nabla \bVV_h(E) \rightarrow [\mathbb{P}_{k-1}(E)]^2,
\]
\cite[Proposition 3.2]{MR3796371}; compare with Remark \ref{rk:computability_1}.
\end{remark}

Finally, we introduce a global virtual space to approximate a temperature variable
\begin{equation*}
\bVVh := \{S_h \in \bVV: \, S_h|_{E} \in \bVVh(E) \, \forall E \in \CT_h\}.
\end{equation*}

\subsection{Virtual element forms}

With the discrete spaces $\bVh$, $\Q_h$, and $\bVVh$  in hand, and following \cite{AVV,MR3796371,NMTMA-17-210} we now introduce discrete versions of the continuous forms involved in the weak problem \eqref{def:weak_BFH}.
\\
\begin{itemize}[leftmargin=*]
\item The discrete counterpart of $a(\cdot;\cdot,\cdot)$ (cf. \eqref{eq:bilinear_form_a}) is defined by 
\begin{equation*}
a_h(\cdot;\cdot,\cdot):\bVVh\times\bVh\times\bVh \rightarrow \mathbb{R}, 
\qquad
a_h(X_h;\bv_h,\bw_h) := \ds 
\sum_{E\in\CT_h} a_h^E(X_h;\bv_h,\bw_h),
\end{equation*}
where $a_h^{E}(\cdot;\cdot,\cdot):\bVVh(E)\times\bVh(E)\times\bVh(E) \rightarrow \mathbb{R}$ is given by
\begin{multline*}
a_h^E(X_h;\bv_h,\bw_h) := 
\displaystyle 
\int_E \nu(\Pi_k^{0,E}X_h)\bXi^{0,E}_{k-1}\nabla\bv_h:\bXi^{0,E}_{k-1}\nabla\bw_h 
\\
+ 
\nu(\Pi^{0,E}_0X_h)S_{\textit{V}}^{E}((\mathbf{I}-\bPi^{0,E}_k)\bv_h,(\mathbf{I}-\bPi^{0,E}_k)\bw_h).
\end{multline*}
Here, $S_{\textit{V}}^{E}(\cdot,\cdot):\bVh(E)\times\bVh(E) \rightarrow \mathbb{R}$ is a computable symmetric form that satisfies
\begin{equation}
|\bv_h|_{1,E}^2 \lesssim S_{\textit{V}}^{E}(\bv_h,\bv_h) \lesssim |\bv_h|_{1,E}^2 \quad \forall \bv_h \in \bVh(E) \cap \ker(\bPi^{0,E}_k).
\label{eq:S_v}
\end{equation}
\item The discrete counterpart of $\mathfrak{a}(\cdot;\cdot,\cdot)$ (cf. \eqref{eq:bilinear_form_fraka}) is defined by
\begin{equation}
\mathfrak{a}_h(\cdot;\cdot,\cdot):\bVVh\times\bVVh\times\bVVh \rightarrow \mathbb{R}, 
\qquad 
\mathfrak{a}_h(X_h;R_h,S_h) := \ds \sum_{E\in\CT_h} \mathfrak{a}_h^E(X_h;R_h,S_h),
\label{eq:bilinear_form_fraka_h}
\end{equation}
where $\mathfrak{a}_h^{E}(\cdot;\cdot,\cdot):\bVVh(E)\times\bVVh(E)\times\bVVh(E) \rightarrow \mathbb{R}$ is given by
\begin{multline*}
\mathfrak{a}_h^E(X_h;R_h,S_h) := \displaystyle \int_E \kappa(\Pi^{0,E}_k X_h)\bPi^{0,E}_{k-1}\nabla R_h\cdot \bPi^{0,E}_{k-1}\nabla S_h \\
+ \kappa(\Pi^{0,E}_0 X_h)S_{\textit{T}}^{E}((\I-\Pi^{0,E}_k)R_h,(\I-\Pi^{0,E}_k)S_h).
\end{multline*}
Here, $S_{\textit{T}}^{E}(\cdot,\cdot):\bVVh(E)\times\bVVh(E) \rightarrow \mathbb{R}$ is a computable symmetric form that satisfies
\begin{equation}
|X_h|_{1,E}^2 \lesssim S_{\textit{T}}^{E}(X_h,X_h) \lesssim |X_h|_{1,E}^2 \quad \forall X_h \in \bVVh(E) \cap \ker(\Pi^{0,E}_k).
\label{eq:S_T}
\end{equation}
\item The discrete counterpart of $c_N(\cdot;\cdot,\cdot)$ (cf. \eqref{eq:bilinear_form_aL_cN}) is defined by
\begin{equation*}
c_{N,h}(\cdot;\cdot,\cdot):\bVh\times\bVh\times\bVh \rightarrow \mathbb{R}, 
\qquad 
c_{N,h}(\bz_h;\bv_h,\bw_h) := \ds \sum_{E\in\CT_h} c_{N,h}^E(\bz_h;\bv_h,\bw_h),
\end{equation*}
where $c_{N,h}^{E}(\cdot;\cdot,\cdot):\bVh(E)\times\bVh(E)\times\bVh(E) \rightarrow \mathbb{R}$ is given by
\begin{equation*}
c_{N,h}^E(\bz_h;\bv_h,\bw_h) := 
\displaystyle 
\int_E [\bXi^{0,E}_{k-1}(\nabla \bv_h) \bPi^{0,E}_k \bz_h]\cdot \bPi^{0,E}_k \bw_h.
\end{equation*}
Following \cite[equation (38)]{MR3796371} and \cite[equation (3.24)]{AVV}, we define
\begin{equation*}
c_{N,h}^{\textit{skew},E}(\bz_h;\bv_h,\bw_h):=\tfrac{1}{2}(c_{N,h}^{E}(\bz_h;\bv_h,\bw_h)-c_{N,h}^{E}(\bz_h;\bw_h,\bv_h)),
\end{equation*}
and $c_{N,h}^{\textit{skew}}(\bz_h;\bv_h,\bw_h) := \ds \sum_{E\in\CT_h} c_{N,h}^{\textit{skew},E}(\bz_h;\bv_h,\bw_h)$.
\item The discrete counterpart of $c_{F}(\cdot;\cdot,\cdot)$ (cf. \eqref{eq:bilinear_form_cF_d}) is defined by
\begin{equation*}
c_{F,h}(\cdot;\cdot,\cdot):\bVh\times\bVh\times\bVh \rightarrow \mathbb{R}, 
\qquad 
c_{F,h}(\bz_h;\bv_h,\bw_h) := \ds \sum_{E\in\CT_h} c_{F,h}^{E}(\bz_h;\bv_h,\bw_h),
\end{equation*}
where $c_{F,h}^{E}(\cdot;\cdot,\cdot):\bVh(E)\times\bVh(E)\times\bVh(E) \rightarrow \mathbb{R}$ is given by 
\begin{equation*}
c_{F,h}^E(\bz_h;\bv_h,\bw_h) := c_{F}^E(\bPi^{0,E}_k\bz_h;\bPi^{0,E}_k\bv_h,\bPi^{0,E}_k\bw_h).
\end{equation*}
\item The discrete counterpart of $\mathfrak{c}(\cdot;\cdot,\cdot)$ (cf. \eqref{eq:bilinear_form_frakc}) is defined by
\begin{equation*}
\mathfrak{c}_{h}(\cdot;\cdot,\cdot):\bVh\times\bVVh\times\bVVh \rightarrow \mathbb{R}, 
\qquad 
\mathfrak{c}_{h}(\bv_h;R_h,S_h) := \ds \sum_{E\in\CT_h} \mathfrak{c}_{h}^{E}(\bv_h;R_h,S_h),
\end{equation*}
where $\mathfrak{c}_{h}^{E}(\cdot;\cdot,\cdot):\bVh(E)\times\bVVh(E)\times\bVVh(E) \rightarrow \mathbb{R}$ is given by
\begin{equation*}
\mathfrak{c}_{h}^E(\bv_h;R_h,S_h) := \displaystyle \int_E (\bPi^{0,E}_k\bv_h \cdot \bPi^{0,E}_{k-1}\nabla R_h)\Pi^{0,E}_{k}S_h.
\end{equation*}
Following \cite[equation (3.25)]{AVV}, we define 
\begin{equation*}
\mathfrak{c}_{h}^{\textit{skew},E}(\bv_h;R_h,S_h):=\displaystyle\tfrac{1}{2}(\mathfrak{c}_{h}^{E}(\bv_h;R_h,S_h)-\mathfrak{c}_{h}^{E}(\bv_h;S_h,R_h)),
\end{equation*}
and $\mathfrak{c}_{h}^{\textit{skew}}(\bv_h;R_h,S_h) := \ds \sum_{E\in\CT_h} \mathfrak{c}_{h}^{\textit{skew},E}(\bv_h;R_h,S_h)$.
\item The discrete counterpart of $d(\cdot,\cdot)$ (cf. \eqref{eq:bilinear_form_cF_d}) is defined by
\begin{equation*}
d_h(\cdot,\cdot) : \bVh\times\bVh \rightarrow \mathbb{R}, 
\qquad d_h(\bv_h,\bw_h) := \ds \sum_{E\in\CT_h} d_h^{E}(\bv_h,\bw_h),
\end{equation*}
where $d_h^{E}(\cdot,\cdot):\bVh(E)\times\bVh(E) \rightarrow \mathbb{R}$ is given by
\begin{equation*}
d_h^E(\bv_h,\bw_h) := \int_E \bPi^{0,E}_k\bv_h \cdot \bPi^{0,E}_k\bw_h.
\end{equation*}
\end{itemize}

\begin{remark}[skew-symmetry]\rm
We note that $c_{N,h}^{\textit{skew}}$ and $\mathfrak{c}_{h}^{\textit{skew}}$ are skew-symmetric by construction, i.e., for $\bz_h \in \bV_h$, 
\[
 c_{N,h}^{\textit{skew},E}(\bz_h;\bv_h,\bv_h) = 0,
 \qquad
 \mathfrak{c}_{h}^{\textit{skew}}(\bz_h;R_h,R_h) = 0,
\]
for every $\bv_h \in \bV_h$ and every $R_h \in \bVV_h$. As usual, this property simplifies the analysis of the corresponding discrete problem \cite{MR0548867,MR0769654}.
\end{remark}

\begin{remark}[the no discretization of $b$]\rm
As mentioned in \cite{MR3626409,MR3796371}, we do not introduce any approximation of the bilinear form $b$. We note that $b(\bv_h,\textsf{q}_h)$ for $\bv_h \in \bV_h$ and $\textsf{q}_h\in \Q_h$ is computable from the DoFs $\mathbf{D}_{\mathbf{V}}\mathbf{1}, \mathbf{D}_{\mathbf{V}}\mathbf{2}, \mathbf{D}_{\mathbf{V}}\mathbf{4}$ because $\textsf{q}_h$ is a polynomial on each element $E \in \CT_h$.
\end{remark}

\subsection{Virtual element forcing}

From now on we will assume that $\boldsymbol{f} \in \mathbf{L}^2(\Omega)$ and $g \in \L^2(\Omega)$. Within this framework, we define the discrete sources $\boldsymbol{f}_h \in [\mathbb{P}_k(\CT_h)]^2$ and $g_h \in \mathbb{P}_k(\CT_h)$ to be such that, for each $E \in \CT_h$,
\begin{equation*}
\boldsymbol{f}_h|_{E} := \bPi^{0,E}_k\boldsymbol{f}, 
\qquad 
g_h|_{E} := \Pi^{0,E}_k g.
\end{equation*}

\subsection{The virtual element method}
With all the ingredients and definitions introduced in the previous sections, we finally design a virtual element discretization for the nonlinear system \eqref{def:weak_BFH}: Find  $(\bu_h,\textsf{p}_h,T_h)\in \bVh\times \Q_h\times\bVVh$ such that

\begin{equation}\label{def:weak_BFH_discrete}
\left\{\begin{array}{rcl}
a_h(T_h;\bu_h,\bv_h)
+
c_{N,h}^{\textit{skew}}(\bu_h;\bu_h,\bv_h)
+
c_{F,h}(\bu_h;\bu_h,\bv_h)
\\
+
d_h(\bu_h,\bv_h) + b(\bv_h,\textsf{p}_h)&=& (\boldsymbol{f}_h,\bv_h)_{0,\O},
\\
b(\bu_h,\textsf{q}_h) &=& 0, 
\\
\mathfrak{a}_h(T_h;T_h,S_h)
+
\mathfrak{c}_h^{\textit{skew}}(\bu_h;T_h,S_h) 
&=& (g_h,S_h)_{0,\O},
\end{array}\right.
\end{equation}
for all $(\bv_h,\textsf{q}_h,S_h) \in \bVh\times\Q_h\times\bVVh$.

\begin{remark}[reduced formulation]\rm
Using the space $\bZh$ defined in \eqref{eq:Z_h}, the problem \eqref{def:weak_BFH_discrete} can be reformulated as follows: Find  $(\bu_h,T_h)\in \bZh\times\bVVh$ such that
\begin{equation}\label{def:weak_BFH_kernel_discrete}
\left\{\begin{array}{rcl}
a_h(T_h;\bu_h,\bv_h)+c_{N,h}^{\textit{skew}}(\bu_h;\bu_h,\bv_h)+c_{F,h}(\bu_h;\bu_h,\bv_h)\\
+ d_h(\bu_h,\bv_h) &=& (\boldsymbol{f}_h,\bv_h)_{0,\O},
\\
\mathfrak{a}_h(T_h;T_h,S_h)+\mathfrak{c}_h^{\textit{skew}}(\bu_h;T_h,S_h) &=& (g_h,S_h)_{0,\O},
\end{array}\right.
\end{equation}
for all $(\bv_h,S_h) \in \bZh\times\bVVh$.
\end{remark}

To provide an analysis for the discrete problem \eqref{def:weak_BFH_discrete}, we list some properties that the discrete forms satisfy:

\begin{itemize}[leftmargin=*]
\item For any $X_h \in \bVV_h$, $a_h(X_h;\cdot,\cdot)$ is a coercive and continuous bilinear form: There exist $\alpha_*>0$ and $\alpha^*>0$ such that, for every $\bv_h,\bw_h \in \bVh$ we have
\begin{equation}\label{eq:ah_bound}
a_h(X_h;\bv_h,\bw_h) \leq \alpha^{*}\nu^*|\bv_h|_{1,\O}|\bw_h|_{1,\O},
\qquad
a_h(X_h;\bv_h,\bv_h) \geq \alpha_{*}\nu_{*}|\bv_h|_{1,\O}^2.
\end{equation}
These inequalities result from standard properties of $\boldsymbol{\Pi}^{0,E}_{n}$ and $\bXi_{k-1}^{0,E}$ in conjunction with basic inequalities and the properties \textbf{A1} and \eqref{eq:S_v} that $\nu$ and $S_V$ satisfy.

\item For any $X_h \in \bVV_h$, $\mathfrak{a}_h(X_h;\cdot,\cdot)$ is a coercive and continuous bilinear form: There exist $\beta_*>0$ and $\beta^*>0$ such that, for every $R_h, S_h \in \bVVh$, we have
\begin{equation}\label{eq:ahtemp_bound}
\mathfrak{a}_h(X_h;R_h,S_h) \leq \beta^{*}\kappa^*|R_h|_{1,\O}|S_h|_{1,\O},
\qquad
\mathfrak{a}_h(X_h;S_h,S_h) \geq \beta_{*}\kappa_{*}|S_h|_{1,\O}^2.
\end{equation}
These inequalities follow from very similar arguments to those that lead to \eqref{eq:ah_bound}.

\item $c_{N,h}^{\textit{skew}}(\cdot;\cdot,\cdot)$ is continuous: For every $\bz_h,\bv_h,\bw_h \in \bVh$, we have
\begin{equation}\label{eq:ch_bound}
c_{N,h}^{\textit{skew}}(\bz_h;\bv_h,\bw_h) \leq \widehat{C}_{N}|\bz_h|_{1,\O}|\bv_h|_{1,\O}|\bw_h|_{1,\O}.
\end{equation}
A proof of this estimate is essentially contained in \cite[Proposition 3.3]{MR3796371}.

\item $c_{F,h}(\cdot;\cdot,\cdot)$ is continuous: For every $\bz_h,\bv_h,\bw_h \in \bVh$, we have
\begin{equation}\label{eq:cFh_bound}
c_{F,h}(\bz_h;\bv_h,\bw_h) \leq \widehat{C}_{F}|\bz_h|_{1,\O}^{r-2}|\bv_h|_{1,\O}|\bw_h|_{1,\O}.
\end{equation}
A proof of this estimate can be found in \cite[Lemma 4.1]{NMTMA-17-210}.

\item $\mathfrak{c}_{h}^{\textit{skew}}(\cdot;\cdot,\cdot)$ is continuous: For every $\bv_h \in \bV_h$ and $R_h,S_h \in \bVV_h$, we have
\begin{equation}\label{eq:chtemp_bound}
\mathfrak{c}_h^{\textit{skew}}(\bv_h;R_h,S_h) \leq \widehat{\mathfrak{C}}|\bv_h|_{1,\O} |R_h |_{1,\O}|S_h|_{1,\O},
\end{equation}
This bound can be obtained with arguments similar to those in the proof of \cite[Proposition 3.3]{MR3796371}.

\item $d_h(\cdot,\cdot)$ is continuous: For every $\bv_h, \bw_h \in \bV_h$, we have
\begin{equation}\label{eq:dh_bound}
d_h(\bv_h,\bw_h) \leq \|\bv_h\|_{0,\O}\|\bw_h\|_{0,\O}.
\end{equation}
This bound follows directly from the Cauchy-Schwarz inequality in $\mathbf{L}^2(E)$, the $\mathbf{L}^2$-stability of $\bPi_k^{0,E}$, and the Cauchy-Schwarz inequality in $\mathbb{R}^{\# \CT_h}$.

\item $b(\cdot,\cdot)$ is continuous: For every $\textsf{q}_h \in \Q_h$ and $\bv_h \in \bV_h$, we have
\begin{equation}\label{eq:bh_bound}
b(\bv_h,\textsf{q}_h) \leq |\bv_h|_{1,\O}\|\textsf{q}_h\|_{0,\O}.
\end{equation} 

\end{itemize} 

\section{Existence and uniqueness of solutions for our VEM}
\label{sec:ex_uniq_sol_vem}
In this section, we derive an existence result for the discrete problem \eqref{def:weak_BFH_discrete} without restriction on the problem data by using a fixed point strategy. Moreover, we obtain a global uniqueness result when the problem data is suitably restricted.

\begin{theorem}[existence of discrete solutions]\label{eq:existence}
Under the data assumptions \textbf{A0}) and \textbf{A1}), the nonlinear system \eqref{def:weak_BFH_discrete} admits at least one solution $(\bu_h,\textsf{p}_h,T_h) \in \bVh\times\Q_h\times\bVVh$. Moreover, the following estimates hold uniformly in $h$:
\begin{equation}\label{exi1}
\begin{split}
|\bu_h|_{1,\O} &\leq \alpha_*^{-1}\nu_*^{-1} \textsf{C}\|\boldsymbol{f}_h\|_{0,\O},
\quad 
\|\textsf{p}_h\|_{0,\O} \leq \tilde{\beta}^{-1}\Gamma
\textsf{C}
\|\boldsymbol{f}_h\|_{0,\O}, \\
|T_h|_{1,\O} &\leq \beta_*^{-1}\kappa_*^{-1}\textsf{C}\|g_h\|_{0,\O},
\end{split}
\end{equation}
Here,
$\Gamma :=
1
+
(\alpha_*\nu_*)^{-1}(
\alpha^*\nu^*
+
\textsf{C}\widehat{C}_N\alpha_{*}^{-1}\nu_*^{-1}\|\boldsymbol{f}_h\|_{0,\O}
+
\textsf{C}^{r-2}\widehat{C}_{F}\alpha_*^{2-r}\nu_*^{2-r}\|\boldsymbol{f}_h\|_{0,\O}^{r-2}
+
\textsf{C}^2).
$
\label{thm:existence}
\end{theorem}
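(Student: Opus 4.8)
The plan is to decouple the system and recast existence as a fixed-point problem on the finite-dimensional space $\bZh \times \bVVh$, to which I would apply Brouwer's fixed-point theorem, recovering the pressure afterwards from the discrete inf-sup condition \eqref{eq:infsupdiscreta}. Concretely, given a frozen pair $(\widetilde{\bu}, \widetilde{T}) \in \bZh \times \bVVh$, I would define $\Phi(\widetilde{\bu}, \widetilde{T}) := (\bu_h, T_h)$ by solving the two linearized, decoupled problems: find $T_h \in \bVVh$ with $\mathfrak{a}_h(\widetilde{T}; T_h, S_h) + \mathfrak{c}_h^{\textit{skew}}(\widetilde{\bu}; T_h, S_h) = (g_h, S_h)_{0,\O}$ for all $S_h \in \bVVh$, and find $\bu_h \in \bZh$ with $a_h(\widetilde{T}; \bu_h, \bv_h) + c_{N,h}^{\textit{skew}}(\widetilde{\bu}; \bu_h, \bv_h) + c_{F,h}(\widetilde{\bu}; \bu_h, \bv_h) + d_h(\bu_h, \bv_h) = (\boldsymbol{f}_h, \bv_h)_{0,\O}$ for all $\bv_h \in \bZh$. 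Once the first slots are frozen each problem is linear in its unknown, and well-posedness (hence well-definedness of $\Phi$) follows from the Lax--Milgram lemma: $\mathfrak{a}_h(\widetilde{T}; \cdot, \cdot)$ is coercive by \eqref{eq:ahtemp_bound} and $\mathfrak{c}_h^{\textit{skew}}(\widetilde{\bu}; \cdot, \cdot)$ vanishes on the diagonal by skew-symmetry, while $a_h(\widetilde{T}; \cdot, \cdot)$ is coercive by \eqref{eq:ah_bound}, $c_{N,h}^{\textit{skew}}$ vanishes on the diagonal, and both $c_{F,h}(\widetilde{\bu}; \bv_h, \bv_h) = \sum_E \int_E |\bPi^{0,E}_k \widetilde{\bu}|^{r-2}\,|\bPi^{0,E}_k \bv_h|^2 \geq 0$ and $d_h(\bv_h, \bv_h) \geq 0$ are nonnegative.

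Next I would derive the a priori bounds, which simultaneously show that $\Phi$ maps a ball into itself and yield the stated stability estimates. Testing the temperature equation with $S_h = T_h$ annihilates $\mathfrak{c}_h^{\textit{skew}}$, so coercivity, Cauchy--Schwarz, and Poincar\'e \eqref{eq:Poincare} give $\beta_* \kappa_* |T_h|_{1,\O}^2 \leq (g_h, T_h)_{0,\O} \leq \textsf{C}\|g_h\|_{0,\O}|T_h|_{1,\O}$, hence $|T_h|_{1,\O} \leq \beta_*^{-1}\kappa_*^{-1}\textsf{C}\|g_h\|_{0,\O}$; testing the velocity equation with $\bv_h = \bu_h$ annihilates $c_{N,h}^{\textit{skew}}$ and discards the nonnegative $c_{F,h}$ and $d_h$ terms, giving $\alpha_*\nu_*|\bu_h|_{1,\O}^2 \leq \textsf{C}\|\boldsymbol{f}_h\|_{0,\O}|\bu_h|_{1,\O}$ and thus $|\bu_h|_{1,\O} \leq \alpha_*^{-1}\nu_*^{-1}\textsf{C}\|\boldsymbol{f}_h\|_{0,\O}$. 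Since these bounds do not depend on $(\widetilde{\bu}, \widetilde{T})$, the map $\Phi$ sends the closed ball $B := \{(\bv, S) : |\bv|_{1,\O} \leq \alpha_*^{-1}\nu_*^{-1}\textsf{C}\|\boldsymbol{f}_h\|_{0,\O}, \ |S|_{1,\O} \leq \beta_*^{-1}\kappa_*^{-1}\textsf{C}\|g_h\|_{0,\O}\}$ (convex and compact, as a product of balls in the norm $|\cdot|_{1,\O}$) into itself.

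To invoke Brouwer I still need $\Phi$ to be continuous, which I regard as the main technical point. As $\bZh \times \bVVh$ is finite-dimensional the choice of norm is immaterial, so it suffices to check that the coefficients of the two linear systems depend continuously on the frozen data: $\nu(\Pi^{0,E}_k \widetilde{T})$ and $\kappa(\Pi^{0,E}_k \widetilde{T})$ vary continuously with $\widetilde{T}$ by \textbf{A0}), \textbf{A1}) and the linearity of the projections, while the Forchheimer weight $|\bPi^{0,E}_k \widetilde{\bu}|^{r-2}$ varies continuously with $\widetilde{\bu}$ because $\bz \mapsto |\bz|^{r-2}$ is continuous for $r \in [3,4]$ (here $r-2 \geq 1 > 0$). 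Since the forms remain uniformly coercive, the two solution operators—and hence $\Phi$—depend continuously on $(\widetilde{\bu}, \widetilde{T})$. Brouwer's theorem then yields a fixed point $(\bu_h, T_h) \in B$, which solves exactly the reduced system \eqref{def:weak_BFH_kernel_discrete} and already satisfies the claimed velocity and temperature bounds.

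Finally I would recover the pressure. The residual $\bv_h \mapsto (\boldsymbol{f}_h, \bv_h)_{0,\O} - a_h(T_h; \bu_h, \bv_h) - c_{N,h}^{\textit{skew}}(\bu_h; \bu_h, \bv_h) - c_{F,h}(\bu_h; \bu_h, \bv_h) - d_h(\bu_h, \bv_h)$ vanishes on $\bZh$, so the discrete inf-sup condition \eqref{eq:infsupdiscreta} furnishes a unique $\textsf{p}_h \in \Q_h$ realizing it as $b(\cdot, \textsf{p}_h)$, completing a solution of \eqref{def:weak_BFH_discrete}. Bounding the residual by the continuity estimates \eqref{eq:ah_bound}, \eqref{eq:ch_bound}, \eqref{eq:cFh_bound}, \eqref{eq:dh_bound} together with Cauchy--Schwarz and Poincar\'e, and then inserting $|\bu_h|_{1,\O} \leq \alpha_*^{-1}\nu_*^{-1}\textsf{C}\|\boldsymbol{f}_h\|_{0,\O}$, factors out $\textsf{C}\|\boldsymbol{f}_h\|_{0,\O}$ and reproduces precisely the constant $\Gamma$, giving $\|\textsf{p}_h\|_{0,\O} \leq \tilde{\beta}^{-1}\Gamma\,\textsf{C}\|\boldsymbol{f}_h\|_{0,\O}$. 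Beyond this bookkeeping, the only genuine subtlety is the continuity of $\Phi$ through the Forchheimer nonlinearity; everything else rests on coercivity, skew-symmetry, and the inf-sup condition.
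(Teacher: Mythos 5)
Your proof is correct, but it follows a genuinely different route from the paper's. The paper neither linearizes nor decouples: it defines the full coupled nonlinear residual $\mathrm{A}_{(\bu_h,T_h)}(\bv_h,S_h)$ on $\bZh\times\bVVh$ (the sum of both equations of \eqref{def:weak_BFH_kernel_discrete} minus their right-hand sides), tests with $(\bv_h,S_h)=(\bu_h,T_h)$ --- using exactly the same skew-symmetry, coercivity, and nonnegativity facts you use --- to conclude that $\mathrm{A}_{(\bu_h,T_h)}(\bu_h,T_h)\geq 0$ whenever $|(\bu_h,T_h)|_{1,\O}$ equals a radius $\mu\lesssim \|(\boldsymbol{f}_h,g_h)\|_{0,\O}$, and then invokes a Brouwer-type corollary (Girault--Raviart, Chap.~IV, Corollary 1.1) to produce a zero of the residual inside that ball in a single step. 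You instead construct a decoupled, linearized solution operator $\Phi$ on $\bZh\times\bVVh$, justify its well-definedness by Lax--Milgram, show that it maps a ball into itself by the same testing argument, verify its continuity, and apply the classical Brouwer theorem to $\Phi$. Both routes rest on the same underlying topological tool and yield identical stability bounds and an identical pressure recovery via \eqref{eq:infsupdiscreta}; your bookkeeping reproducing $\Gamma$ matches the paper's. What your route buys: it is more self-contained, since the continuity hypothesis of the Brouwer-type result --- which the paper leaves implicit in the citation --- is precisely the point you isolate and prove (the Forchheimer weight being the only delicate term), and it parallels the fixed-point iteration (Algorithm 1) used in the numerical section, so it simultaneously justifies that each iterate of that algorithm is well defined. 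What the paper's route buys: brevity, since no auxiliary linear problems and no solution-operator continuity check are required.
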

\begin{proof} We proceed in two steps.

\emph{Step 1. Existence of solutions:} Let us first analyze the existence of solutions to the reduced problem \eqref{def:weak_BFH_kernel_discrete}. To this purpose, we
% introduce the space $Y_h := \bZh \times \bVVh$ and
define $\mathrm{A}_{(\bu_h,T_h)}: \bZh \times \bVVh \rightarrow \mathbb{R}$, for $(\bu_h,T_h) \in \bZh \times \bVVh$, as follows:
\begin{multline*}
\mathrm{A}_{(\bu_h,T_h)}(\bv_h,S_h) := a_h(T_h;\bu_h,\bv_h)+c_{N,h}^{\textit{skew}}(\bu_h;\bu_h,\bv_h) + c_{F,h}(\bu_h;\bu_h,\bv_h) + d_h(\bu_h,\bv_h)
\\
- (\boldsymbol{f}_h,\bv_h)_{0,\Omega} + \mathfrak{a}_h(T_h;T_h,S_h) + \mathfrak{c}_h^{\textit{skew}}(\bu_h;T_h,S_h) - (g_h,S_h)_{0,\Omega}.
\end{multline*}
Set $(\bv_h,S_h) = (\bu_h,T_h)$ and use the skew-symmetry of $c_{N,h}^{\textit{skew}}(\cdot;\cdot,\cdot)$ and $\mathfrak{c}_h^{\textit{skew}}(\cdot;\cdot,\cdot)$ and the coercivity properties in \eqref{eq:ah_bound} and \eqref{eq:ahtemp_bound} to obtain
\begin{multline*}
\mathrm{A}_{(\bu_h,T_h)}(\bu_h,T_h) \geq  \alpha_{*}\nu_*|\bu_h|_{1,\O}^2 + \beta_*\kappa_*|T_h|_{1,\O}^2 + c_{F,h}(\bu_h;\bu_h,\bu_h) + d_h(\bu_h,\bu_h) \\
- \|\boldsymbol{f}_h\|_{0,\O}\|\bu_h\|_{0,\O}-\|g_h\|_{0,\O}\|T_h\|_{0,\O},
\end{multline*}
Since $d_h(\bu_h,\bu_h)\geq 0$ and $c_{F,h}(\bu_h;\bu_h,\bu_h) \geq 0$, we thus obtain that 
\begin{multline*}
\mathrm{A}_{(\bu_h,T_h)}(\bu_h,T_h) \geq  \alpha_{*}\nu_*|\bu_h|_{1,\O}^2 + \beta_*\kappa_*|T_h|_{1,\O}^2 - \textsf{C}\left( \|\boldsymbol{f}_h\|_{0,\O}|\bu_h|_{1,\O} - \|g_h\|_{0,\O}|T_h|_{1,\O} \right)
\\
\geq 
(\min\{\alpha_*,\beta_*\}\min\{\nu_*,\kappa_*\})
|(\bu_h,T_h)|_{1,\O}^2 - \textsf{C} \|(\boldsymbol{f}_h,g_h)\|_{0,\O}
|(\bu_h,T_h)|_{1,\O},
\end{multline*}
where $|(\bv_h,S_h)|_{1,\O} := (|\bv_h|_{1,\O}^2 + |S_h|_{1,\O}^2)^{\frac{1}{2}}$ and $\|(\boldsymbol{f},g)\|_{0,\O} := (\|\boldsymbol{f}\|_{0,\O}^2 + \|g\|_{0,\O}^2)^{\frac{1}{2}}$ for every $(\bv_h,S_h) \in \bV_h \times \bVV_h$ and $(\boldsymbol{f},g) \in \mathbf{L}^{2}(\Omega) \times \L^2(\Omega)$. Hence, we obtain that $\mathrm{A}_{(\bu_h,T_h)}(\bu_h,T_h) \geq 0$ for every $(\bu_h,T_h) \in \bZh \times \bVVh$ such that
\begin{equation*}\label{condiexi1}
|(\bu_h,T_h)|_{1,\O} ^2 =\dfrac{\textsf{C}^2\|(\boldsymbol{f}_h,g_h)\|_{0,\O}^2}{(\min\{\alpha_*,\beta_*\}\min\{\nu_*,\kappa_*\})^2} =: \mu^2.
\end{equation*} 
With reference to \cite[Chap. IV, Corollary 1.1]{MR851383}, there thus exists $(\bu_h,T_h) \in \bZ_h \times \bVV_h$ such that $\mathrm{A}_{(\bu_h,T_h)}(\bv_h,S_h) = 0$ for all $(\bv_h,S_h) \in \bZ_h \times \bVV_h$ and $|(\bu_h,T_h)|_{1,\O} \leq \mu \lesssim \|(\boldsymbol{f}_h,g_h)\|_{0,\Omega}$; i.e., the nonlinear system \eqref{def:weak_BFH_kernel_discrete} has at least one solution $(\bu_h,T_h)\in\bZh\times\bVVh$. Finally, the existence of a solution for the system \eqref{def:weak_BFH_discrete} follows immediately from the inf-sup condition \eqref{eq:infsupdiscreta}.

\emph{Step 2. Stability bounds:} Let us first derive the bound for the velocity field $\bu_h$ in \eqref{exi1}. To to this, we set $(\bv_h,\textsf{q}_h)=(\bu_h,0)$ in \eqref{def:weak_BFH_discrete} and use the coercivity property in \eqref{eq:ah_bound}, the skew-symmetry of $c_{N,h}^{\textit{skew}}(\cdot;\cdot,\cdot)$, $d(\bu_h,\bu_h)\geq 0$, and $c_{F,h}(\bu_h;\bu_h,\bu_h)\geq 0$ to obtain the following bounds:
\begin{multline*}
\alpha_*\nu_*|\bu_h|_{1,\O}^2 \leq a_h(T_h;\bu_h,\bu_h)+c_{F,h}(\bu_h;\bu_h,\bu_h)+d_h(\bu_h,\bu_h) \\
= ( \boldsymbol{f}_h,\bu_h)_{0,\O}
\leq \textsf{C} \| \boldsymbol{f}_h \|_{0,\Omega} |\bu_h|_{1,\O}.
\end{multline*}
This immediately yields the the desire bound for the velocity field. The estimate for the temperature follows similar arguments. Finally, the estimate for the pressure can be obtained using \eqref{eq:infsupdiscreta}, \eqref{def:weak_BFH_discrete}, \eqref{eq:ah_bound}, \eqref{eq:ch_bound}, and \eqref{eq:cFh_bound}. This concludes the proof.
\end{proof}

In the following, we present two elementary results that are useful to show the uniqueness of the solutions of system \eqref{def:weak_BFH_discrete}.

\begin{lemma}[Lipschitz property]
\label{estimavel}
Let $X_h, S_h \in \bVVh$, and let $\bv_h \in \bVh$ such that $\bv_h \in \mtW_{q}^1(\CT_h)$ for some $q>2$. Then, there exists $C_{V}>0$ depending on $\O$, the polynomial degree $k$, and the shape regularity constant $\varrho$ such that, for any $\bw_h \in \bVh$,
\begin{equation*}
\left\lvert a_h(X_h;\bv_h,\bw_h) - a_h(S_h;\bv_h,\bw_h)\right\lvert \leq C_{V}\nu_{lip}|X_h-S_h|_{1,\O}|\bv_h|_{\mtW_q^1(\CT_h)}|\bw_h|_{1,\O}.
\end{equation*}
\end{lemma}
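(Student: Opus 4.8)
The plan is to write $\phi := X_h - S_h \in \bVVh$ and to observe that the two copies of $a_h$ differ \emph{only} through the arguments of $\nu$, so that elementwise
\begin{multline*}
a_h^E(X_h;\bv_h,\bw_h) - a_h^E(S_h;\bv_h,\bw_h)
= \int_E \big[\nu(\Pi_k^{0,E}X_h)-\nu(\Pi_k^{0,E}S_h)\big]\,\bXi^{0,E}_{k-1}\nabla\bv_h:\bXi^{0,E}_{k-1}\nabla\bw_h
\\
+ \big[\nu(\Pi_0^{0,E}X_h)-\nu(\Pi_0^{0,E}S_h)\big]\,S_{\textit{V}}^E\big((\mathbf{I}-\bPi^{0,E}_k)\bv_h,(\mathbf{I}-\bPi^{0,E}_k)\bw_h\big).
\end{multline*}
The Lipschitz bound in \textbf{A1}) together with the linearity of the projections then yields the pointwise estimates $|\nu(\Pi_k^{0,E}X_h)-\nu(\Pi_k^{0,E}S_h)| \leq \nu_{lip}|\Pi_k^{0,E}\phi|$ and $|\nu(\Pi_0^{0,E}X_h)-\nu(\Pi_0^{0,E}S_h)| \leq \nu_{lip}|\Pi_0^{0,E}\phi|$. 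I fix the exponent $p\in(2,\infty)$ by $\tfrac1p = \tfrac12 - \tfrac1q$, which is admissible precisely because $q>2$, so that $(p,q,2)$ is a Hölder triple.

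For the \emph{consistency part} I would apply Hölder with exponents $(p,q,2)$ directly inside the integral over $E$, placing $\Pi_k^{0,E}\phi$ in $\L^p(E)$, $\bXi^{0,E}_{k-1}\nabla\bv_h$ in $\L^q(E)$, and $\bXi^{0,E}_{k-1}\nabla\bw_h$ in $\L^2(E)$. Using the $\L^2$-stability of $\bXi^{0,E}_{k-1}$ on $\bw_h$, its $\L^q$-stability on $\bv_h$, and the $\L^p$-stability of $\Pi_k^{0,E}$ on $\phi$ — all valid on shape-regular elements — bounds this contribution by $\nu_{lip}\|\phi\|_{\L^p(E)}\,|\bv_h|_{\mtW_q^1(E)}\,|\bw_h|_{1,E}$, up to a constant depending on $k$ and $\varrho$.

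For the \emph{stabilization part} the crucial point is a cancellation of the elementwise area scalings. Since $\Pi_0^{0,E}\phi$ is constant, Jensen's inequality gives $|\Pi_0^{0,E}\phi| \leq |E|^{-1/p}\|\phi\|_{\L^p(E)}$, while a Cauchy–Schwarz in $S_{\textit{V}}^E$ combined with the norm equivalence \eqref{eq:S_v} (applicable since $(\mathbf{I}-\bPi^{0,E}_k)\bv_h\in\ker\bPi^{0,E}_k$) reduces $S_{\textit{V}}^E$ to the product $|(\mathbf{I}-\bPi^{0,E}_k)\bv_h|_{1,E}\,|(\mathbf{I}-\bPi^{0,E}_k)\bw_h|_{1,E}$. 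I then convert the $\bv_h$ factor to its $\mtW_q^1$ seminorm via $|(\mathbf{I}-\bPi^{0,E}_k)\bv_h|_{1,E} \leq |E|^{1/2-1/q}|(\mathbf{I}-\bPi^{0,E}_k)\bv_h|_{\mtW_q^1(E)}$ and use the $\L^q$- and $\L^2$-boundedness of $\mathbf{I}-\bPi^{0,E}_k$ (again from inverse estimates on shape-regular elements). Because $\tfrac1p=\tfrac12-\tfrac1q$, the factors $|E|^{-1/p}$ and $|E|^{1/2-1/q}$ cancel exactly, so this contribution is bounded by the same quantity $\nu_{lip}\|\phi\|_{\L^p(E)}\,|\bv_h|_{\mtW_q^1(E)}\,|\bw_h|_{1,E}$.

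Summing the two identical elementwise bounds over $E\in\CT_h$ and applying the discrete Hölder inequality in $\mathbb{R}^{\#\CT_h}$ with exponents $(p,q,2)$ produces $\nu_{lip}\|\phi\|_{\L^p(\O)}\,|\bv_h|_{\mtW_q^1(\CT_h)}\,|\bw_h|_{1,\O}$. A final application of the Sobolev embedding $\bVV\hookrightarrow\L^p(\O)$ (valid for every finite $p$ in two dimensions), together with the Poincaré inequality \eqref{eq:Poincare}, replaces $\|\phi\|_{\L^p(\O)}$ by $C|\phi|_{1,\O}=C|X_h-S_h|_{1,\O}$, and absorbing all stability and embedding constants into $C_V$ finishes the proof. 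I expect the main obstacle to be the stabilization part: one must track the $|E|^{\pm}$ scalings carefully and justify the $\L^p$- and $\mtW_q^1$-boundedness of the projection operators on general polygons, which is exactly where the shape-regularity assumptions \textbf{A2}) and \textbf{A3}) enter through the underlying inverse inequalities.
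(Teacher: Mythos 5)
Your argument should be judged on its own merits, because the paper does not actually prove this lemma: its entire ``proof'' is the citation \cite[Lemma 4.3]{AVV}. Measured against that, your proposal is a correct, self-contained argument, and it follows what is in substance the standard (and, in the cited reference, essentially the same) route: split the difference into the consistency and stabilization contributions, apply the Lipschitz bound of \textbf{A1}) together with linearity of the projections, use the H\"older triple $(p,q,2)$ with $1/p=1/2-1/q$, invoke elementwise stability of the polynomial projections, sum with the discrete H\"older inequality, and conclude with $\H_0^1(\O)\hookrightarrow\L^p(\O)$ and Poincar\'e. Your handling of the stabilization term is the nicest part: Jensen's bound $|\Pi_0^{0,E}(X_h-S_h)|\le |E|^{-1/p}\|X_h-S_h\|_{\L^p(E)}$, Cauchy--Schwarz for $S_V^E$ restricted to $\bVh(E)\cap\ker(\bPi_k^{0,E})$ (legitimate, since \eqref{eq:S_v} makes $S_V^E$ positive semidefinite on that subspace, and both arguments do lie in it), and the exact cancellation $|E|^{-1/p}\,|E|^{1/2-1/q}=1$ keep the estimate uniform over elements of different sizes.

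Two points deserve tightening. First, what you actually need for the two velocity factors is the \emph{seminorm} stability $|(\mathbf{I}-\bPi_k^{0,E})\bv_h|_{\mtW_q^1(E)}\lesssim|\bv_h|_{\mtW_q^1(E)}$ and $|(\mathbf{I}-\bPi_k^{0,E})\bw_h|_{1,E}\lesssim|\bw_h|_{1,E}$, not the ``$\L^q$- and $\L^2$-boundedness'' you name; this is a slip of terminology rather than of substance, since both estimates follow on star-shaped elements (assumptions \textbf{A2})--\textbf{A3})) from polynomial inverse inequalities combined with a Bramble--Hilbert argument, exactly as you anticipate. Second, the constant you produce necessarily depends on $q$ as well: the $\L^p$-stability constant of $\Pi_k^{0,E}$ and the embedding constant of $\H_0^1(\O)\hookrightarrow\L^p(\O)$ both degenerate as $q\downarrow 2$, i.e.\ as $p=2q/(q-2)\to\infty$. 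The lemma's statement lists only $\O$, $k$, and $\varrho$, so this dependence must be read as implicit (the same is true of the cited result); it is worth stating explicitly that $C_V=C_V(\O,k,\varrho,q)$.
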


\begin{proof}
See \cite[Lemma 4.3]{AVV}.
\end{proof}

\begin{lemma}[Lipschitz property]
\label{estimatemp}
Let $X_h, S_h \in \bVVh$, and let $R_h \in \bVVh$ such that $R_h \in \W_{q}^1(\CT_h)$ for some $q>2$. Then, there exists $C_{T}>0$ depending on $\O$, the polynomial degree $k$, and the shape regularity constant $\varrho$ such that, for any $P_h \in \bVVh$,
\begin{equation*}
\left\lvert \mathfrak{a}_h(X_h;R_h,P_h) - \mathfrak{a}_h(S_h;R_h,P_h)\right\lvert \leq C_{T}\kappa_{lip}|X_h-S_h|_{1,\O}|R_h|_{\W_q^1(\CT_h)}|P_h|_{1,\O}.
\end{equation*}
\end{lemma}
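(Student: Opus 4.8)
The plan is to mirror the argument for the velocity form in Lemma~\ref{estimavel}, exploiting that $\mathfrak{a}_h^E$ has exactly the same two-part structure: a \emph{consistency} part built from the polynomial projections of the gradients and a \emph{stabilization} part. First I would write the difference elementwise,
\[
\mathfrak{a}_h(X_h;R_h,P_h)-\mathfrak{a}_h(S_h;R_h,P_h)=\sum_{E\in\CT_h}\big(\mathrm{Cons}_E+\mathrm{Stab}_E\big),
\]
where $\mathrm{Cons}_E$ collects the projected-gradient part of $\mathfrak{a}_h^E$ and $\mathrm{Stab}_E$ the $S_{\textit{T}}^E$ part, and use assumption \textbf{A0}) to extract the Lipschitz factor pointwise, $|\kappa(\Pi^{0,E}_k X_h)-\kappa(\Pi^{0,E}_k S_h)|\le \kappa_{lip}|\Pi^{0,E}_k(X_h-S_h)|$ and likewise with $\Pi^{0,E}_0$. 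The guiding device is a three-factor H\"older inequality with exponents $(p_1,q,2)$, where $1/p_1=1/2-1/q$ (so $p_1=2q/(q-2)<\infty$ precisely because $q>2$): the factor coming from $X_h-S_h$ is measured in $\L^{p_1}$, the one from $R_h$ in $\L^q$, and the one from $P_h$ in $\L^2$.

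For $\mathrm{Cons}_E$ I would apply this H\"older inequality to $\int_E |\Pi^{0,E}_k(X_h-S_h)|\,|\bPi^{0,E}_{k-1}\nabla R_h|\,|\bPi^{0,E}_{k-1}\nabla P_h|$ and then invoke the $\L^p$-stability of the $\L^2$-orthogonal projections on star-shaped elements (a consequence of the mesh regularity \textbf{A2})--\textbf{A3})) to replace $\Pi^{0,E}_k(X_h-S_h)$, $\bPi^{0,E}_{k-1}\nabla R_h$, $\bPi^{0,E}_{k-1}\nabla P_h$ by $X_h-S_h$, $\nabla R_h$, $\nabla P_h$ in the respective norms. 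Summing over $E$ with the discrete $(p_1,q,2)$-H\"older inequality reassembles $\|X_h-S_h\|_{\L^{p_1}(\O)}$, $|R_h|_{\W_q^1(\CT_h)}$, and $|P_h|_{1,\O}$; the two-dimensional Sobolev embedding $\H_0^1(\O)\hookrightarrow\L^{p_1}(\O)$ (valid since $p_1<\infty$), together with Poincar\'e, then converts $\|X_h-S_h\|_{\L^{p_1}(\O)}$ into $|X_h-S_h|_{1,\O}$.

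For $\mathrm{Stab}_E$ I would bound $S_{\textit{T}}^E$ by Cauchy--Schwarz and \eqref{eq:S_T} to get $|S_{\textit{T}}^E((\I-\Pi^{0,E}_k)R_h,(\I-\Pi^{0,E}_k)P_h)|\lesssim |R_h|_{1,E}|P_h|_{1,E}$, using $\H^1$-seminorm stability of $\I-\Pi^{0,E}_k$. The step needing care is that this yields an $\L^2$ gradient of $R_h$, not an $\L^q$ one; I would recover the right global quantity by tracking element-size powers. Writing the constant $\Pi^{0,E}_0(X_h-S_h)$ as $|E|^{-1/p_1}\|\Pi^{0,E}_0(X_h-S_h)\|_{\L^{p_1}(E)}$ (Jensen), and using $|R_h|_{1,E}\le|E|^{1/2-1/q}|R_h|_{\W_q^1(E)}$ (H\"older on $E$), the net power of $|E|$ is $-1/p_1+1/2-1/q=0$ by the choice of $p_1$, so the element-size factors cancel exactly. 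A final discrete $(p_1,q,2)$-H\"older sum over $E$, followed by the same embedding, produces the three global norms. The constant $C_T$ then absorbs the Sobolev, projection-stability, and stabilization constants, all depending only on $\O$, $k$, and $\varrho$.

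The main obstacle is the uniform-in-$h$ $\L^p$-stability of the projections $\Pi^{0,E}_k$ and $\bPi^{0,E}_{k-1}$ for $p\neq2$, together with the exact cancellation of the $|E|$-powers in the stabilization term; both rest on the shape-regularity assumptions \textbf{A2})--\textbf{A3}), and the hypothesis $q>2$ is exactly what keeps $p_1$ finite so that the planar Sobolev embedding is available.
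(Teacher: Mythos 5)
Your proposal is correct and follows essentially the same route as the proof the paper relies on (it defers, via Lemma~\ref{estimavel}, to the argument of \cite[Lemma 4.3]{AVV}): split $\mathfrak{a}_h^E$ into its projected-gradient and stabilization parts, extract $\kappa_{lip}$ via \textbf{A0}), apply the three-factor H\"older inequality with exponents $(2q/(q-2),q,2)$, invoke the $\L^p$-stability of the $\L^2$-projections and the $\H^1$-stability of $\I-\Pi^{0,E}_k$ on shape-regular elements, and conclude with the planar Sobolev embedding $\H_0^1(\O)\hookrightarrow\L^{2q/(q-2)}(\O)$. The element-size bookkeeping you use for the stabilization term, with the exact cancellation of the $|E|$-powers, is the same device that makes the cited proof work, so the proposal supplies correctly the details the paper omits.
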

\begin{proof}
The proof is similar to that of Lemma \ref{estimavel}, so we skip the details.
\end{proof}

We are now in a position to show the uniqueness of solutions for system \eqref{def:weak_BFH_discrete}.

\begin{theorem}[uniqueness of discrete solutions]\label{eq:uniqueness} 
Let us assume that assumptions \textbf{A0}) and \textbf{A1}) hold. If
\begin{align}
\label{eq:uniq}
\dfrac{C_{V}\nu_{lip}\widehat{\mathfrak{C}}\textsf{C}\|g_h\|_{0,\O} |\bu_h|_{\mtW_q^1(\CT_h)} }{\alpha_*\nu_*\beta_*\kappa_*(\beta_*\kappa_{*}-C_T\kappa_{lip}|T_h|_{\W_q^1(\CT_h)})}
+
\dfrac{\widehat{C}_{N}\textsf{C}}{\alpha_*^2\nu_*^2}\|\boldsymbol{f}_h\|_{0,\O} & < 1,
\\
\label{eq:uniq_2}
C_T\kappa_{lip}|T_h|_{\W_q^1(\CT_h)} &< \beta_*\kappa_{*},
\end{align}
and the nonlinear system \eqref{def:weak_BFH_discrete} admits a solution $(\bu_h,\textsf{p}_h,T_h) \in \bZh \times \Q_h \times \bVV_h$ such that $\bu_h \in \mtW_q^1(\CT_h)$ and $T_h \in \W_q^1(\CT_h)$ for some $q>2$, then this solution is unique.
\label{thm:uniqueness}
\end{theorem}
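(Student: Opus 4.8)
The plan is to assume the existence of two solutions and show they must coincide, exploiting the smallness conditions \eqref{eq:uniq}--\eqref{eq:uniq_2}. Suppose $(\bu_h^1, T_h^1)$ and $(\bu_h^2, T_h^2)$ both solve the reduced system \eqref{def:weak_BFH_kernel_discrete}, with the extra regularity $\bu_h^1 \in \mtW_q^1(\CT_h)$ and $T_h^1 \in \W_q^1(\CT_h)$ assumed for one of them (say the first). Set $\eu := \bu_h^1 - \bu_h^2 \in \bZh$ and $e_T := T_h^1 - T_h^2 \in \bVVh$. I would first subtract the two temperature equations, then the two velocity equations, obtaining equations for the differences. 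The natural test functions are $\bv_h = \eu$ in the velocity equation and $S_h = e_T$ in the temperature equation.

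The crucial preliminary step is to decompose each nonlinear term so that the skew-symmetry kills the diagonal contributions and the Lipschitz lemmas control the coefficient-dependence. For the temperature equation, I would write
\[
\mathfrak{a}_h(T_h^1; T_h^1, e_T) - \mathfrak{a}_h(T_h^2; T_h^2, e_T)
= \bigl[\mathfrak{a}_h(T_h^1; e_T, e_T)\bigr] + \bigl[\mathfrak{a}_h(T_h^1; T_h^2, e_T) - \mathfrak{a}_h(T_h^2; T_h^2, e_T)\bigr],
\]
where the first bracket is bounded below by $\beta_* \kappa_* |e_T|_{1,\O}^2$ via \eqref{eq:ahtemp_bound} and the second is controlled by Lemma \ref{estimatemp} as $C_T \kappa_{lip} |e_T|_{1,\O} |T_h^2|_{\W_q^1(\CT_h)} |e_T|_{1,\O}$. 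The convective difference $\mathfrak{c}_h^{\textit{skew}}(\bu_h^1; T_h^1, e_T) - \mathfrak{c}_h^{\textit{skew}}(\bu_h^2; T_h^2, e_T)$ splits as $\mathfrak{c}_h^{\textit{skew}}(\eu; T_h^1, e_T) + \mathfrak{c}_h^{\textit{skew}}(\bu_h^2; e_T, e_T)$, where the second term vanishes by skew-symmetry and the first is bounded by $\widehat{\mathfrak{C}} |\eu|_{1,\O} |T_h^1|_{1,\O} |e_T|_{1,\O}$ using \eqref{eq:chtemp_bound} and the stability bound $|T_h^1|_{1,\O} \leq \beta_*^{-1}\kappa_*^{-1}\textsf{C}\|g_h\|_{0,\O}$. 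Combining these yields, after using \eqref{eq:uniq_2} to guarantee the coercivity constant stays positive, a bound of the shape
\[
(\beta_*\kappa_* - C_T\kappa_{lip}|T_h^1|_{\W_q^1(\CT_h)})\,|e_T|_{1,\O}
\lesssim \widehat{\mathfrak{C}}\,\beta_*^{-1}\kappa_*^{-1}\textsf{C}\|g_h\|_{0,\O}\,|\eu|_{1,\O},
\]
i.e. $|e_T|_{1,\O}$ is controlled by $|\eu|_{1,\O}$.

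The velocity equation is handled analogously. Testing with $\eu$ and decomposing, the viscous term gives $a_h(T_h^1; \eu, \eu) \geq \alpha_*\nu_* |\eu|_{1,\O}^2$ plus a coefficient-difference term $a_h(T_h^1; \bu_h^2, \eu) - a_h(T_h^2; \bu_h^2, \eu)$ handled by Lemma \ref{estimavel} as $C_V \nu_{lip} |e_T|_{1,\O} |\bu_h^1|_{\mtW_q^1(\CT_h)} |\eu|_{1,\O}$ (taking $\bu_h^1$ as the smooth argument). The Navier term splits via skew-symmetry so only $c_{N,h}^{\textit{skew}}(\eu; \bu_h^1, \eu)$ survives, bounded by $\widehat{C}_N |\eu|_{1,\O} |\bu_h^1|_{1,\O} |\eu|_{1,\O}$ with $|\bu_h^1|_{1,\O} \leq \alpha_*^{-1}\nu_*^{-1}\textsf{C}\|\boldsymbol{f}_h\|_{0,\O}$. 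For the Forchheimer and damping terms I would use the monotonicity of $\bu \mapsto |\bu|^{r-2}\bu$ and of the identity, so that $c_{F,h}(\bu_h^1;\bu_h^1,\eu) - c_{F,h}(\bu_h^2;\bu_h^2,\eu) \geq 0$ and $d_h(\eu,\eu)\geq 0$ contribute favorably (or are simply dropped). This produces $\alpha_*\nu_*|\eu|_{1,\O} \lesssim \widehat{C}_N \alpha_*^{-1}\nu_*^{-1}\textsf{C}\|\boldsymbol{f}_h\|_{0,\O}|\eu|_{1,\O} + C_V\nu_{lip}|\bu_h^1|_{\mtW_q^1(\CT_h)}|e_T|_{1,\O}$. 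Substituting the temperature estimate for $|e_T|_{1,\O}$ and dividing by $|\eu|_{1,\O}$ (assuming it is nonzero) yields exactly the left-hand side of \eqref{eq:uniq} bounded below by $1$, contradicting the hypothesis; hence $\eu = 0$, then $e_T = 0$, and finally $\textsf{p}_h^1 = \textsf{p}_h^2$ follows from the discrete inf-sup condition \eqref{eq:infsupdiscreta}. The main obstacle I anticipate is the sign handling of the Forchheimer and damping terms: one must verify the monotonicity inequality survives the projection $\bPi^{0,E}_k$ in the definition of $c_{F,h}^E$, since the difference $c_{F,h}(\bu_h^1;\bu_h^1,\eu)-c_{F,h}(\bu_h^2;\bu_h^2,\eu)$ is evaluated on projected arguments $\bPi^{0,E}_k\bu_h^i$ rather than on $\bu_h^i$ directly; the key is that $\bPi^{0,E}_k\eu = \bPi^{0,E}_k\bu_h^1 - \bPi^{0,E}_k\bu_h^2$, so the standard monotonicity estimate applies to the projected fields and the nonnegative contribution is preserved.
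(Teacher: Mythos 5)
Your overall strategy coincides with the paper's (subtract the two solutions, use skew-symmetry to cancel the diagonal convective contributions, the Lipschitz Lemmas \ref{estimavel} and \ref{estimatemp} for the coefficient differences, monotonicity of the projected Forchheimer and damping terms, the stability bounds, and the inf-sup condition \eqref{eq:infsupdiscreta} for the pressure), and your treatment of the Forchheimer term, including the observation that $\bPi^{0,E}_k$ commutes with the difference so monotonicity survives projection, is correct. However, there is a genuine flaw in the orientation of your add-and-subtract decompositions, and it hits exactly the delicate point of the theorem. You assume the extra regularity for the \emph{first} solution, $\bu_h^1 \in \mtW_q^1(\CT_h)$ and $T_h^1 \in \W_q^1(\CT_h)$, yet your decompositions produce the Lipschitz differences $\mathfrak{a}_h(T_h^1;T_h^2,e_T)-\mathfrak{a}_h(T_h^2;T_h^2,e_T)$ and $a_h(T_h^1;\bu_h^2,\eu)-a_h(T_h^2;\bu_h^2,\eu)$, whose \emph{middle} arguments are $T_h^2$ and $\bu_h^2$ --- the solution about which nothing extra is assumed. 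Lemmas \ref{estimatemp} and \ref{estimavel} bound such differences by the broken $\W_q^1$-norm of whatever function actually occupies that slot; you cannot simply ``take $\bu_h^1$ as the smooth argument.'' As written, the temperature bound should read $C_T\kappa_{lip}|e_T|_{1,\O}^2|T_h^2|_{\W_q^1(\CT_h)}$ (a quantity that hypotheses \eqref{eq:uniq}--\eqref{eq:uniq_2} do not control, since they constrain $|T_h^1|_{\W_q^1(\CT_h)}$), and your displayed inequality then silently replaces $|T_h^2|_{\W_q^1(\CT_h)}$ by $|T_h^1|_{\W_q^1(\CT_h)}$; the same unjustified swap occurs in the velocity step.

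The repair is precisely the paper's choice of decomposition: add and subtract so that the \emph{arbitrary} solution appears as the coefficient of the coercive term (coercivity in \eqref{eq:ahtemp_bound} and \eqref{eq:ah_bound} holds for any coefficient argument) and the \emph{regular} solution is frozen in the middle slot of the Lipschitz difference, i.e.
\begin{align*}
\mathfrak{a}_h(T_h^1;T_h^1,e_T)-\mathfrak{a}_h(T_h^2;T_h^2,e_T)
&=
\bigl[\mathfrak{a}_h(T_h^1;T_h^1,e_T)-\mathfrak{a}_h(T_h^2;T_h^1,e_T)\bigr]
+
\mathfrak{a}_h(T_h^2;e_T,e_T),
\\
a_h(T_h^1;\bu_h^1,\eu)-a_h(T_h^2;\bu_h^2,\eu)
&=
\bigl[a_h(T_h^1;\bu_h^1,\eu)-a_h(T_h^2;\bu_h^1,\eu)\bigr]
+
a_h(T_h^2;\eu,\eu),
\end{align*}
so that the lemmas are applied to differences whose middle arguments are $T_h^1$ and $\bu_h^1$ --- the functions assumed to lie in $\W_q^1(\CT_h)$ and $\mtW_q^1(\CT_h)$, whose norms are the ones appearing in \eqref{eq:uniq}--\eqref{eq:uniq_2}. (Equivalently, you could keep your algebra and instead declare the second solution to be the regular one; what you cannot do is mix the two labelings.) With this correction the remainder of your argument --- the surviving convective terms $\mathfrak{c}_h^{\textit{skew}}(\eu;T_h^1,e_T)$ and $c_{N,h}^{\textit{skew}}(\eu;\bu_h^1,\eu)$ estimated via \eqref{eq:chtemp_bound}, \eqref{eq:ch_bound} and the stability bounds of Theorem \ref{eq:existence} (which hold for every solution), the positivity of the Forchheimer and damping differences, the concluding smallness contradiction, and the inf-sup argument for the pressure --- goes through and reproduces the paper's proof.
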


\begin{proof}
We begin the proof with the assumption that there is another solution $(\widehat{\bu}_h,\widehat{\textsf{p}}_h,\widehat{T}_h) \in \bZh\times\Q_h\times\bVVh$ of system \eqref{def:weak_BFH_discrete}. Define
\begin{equation*}
\overline{\bu}_h := \bu_h-\widehat{\bu}_h \in \bZh,
\qquad
\overline{T}_h := T_h-\widehat{T}_h \in \bVVh,
\qquad
\overline{\textsf{p}}_h := \textsf{p}_h-\widehat{\textsf{p}}_h \in \Q_h.
\end{equation*}

\emph{Step 1}. A bound for $|\overline{T}_h|_{1,\O}$. We begin by noting that it follows directly from the definition of $\mathfrak{c}_h^{\textit{skew}}(\cdot;\cdot,\cdot)$ that
\begin{equation}\label{eq:skew1}
\mathfrak{c}_h^{\textit{skew}}(\widehat{\bu}_h;\overline{T}_h,\overline{T}_h) = 0 \implies \mathfrak{c}_h^{\textit{skew}}(\widehat{\bu}_h;T_h,\overline{T}_h) = \mathfrak{c}_h^{\textit{skew}}(\widehat{\bu}_h;\widehat{T}_h,\overline{T}_h).
\end{equation}
We now use the fact that $(\bu_h,T_h)$ and $(\widehat{\bu}_h,\widehat{T}_h)$ solve the system \eqref{def:weak_BFH_kernel_discrete} to set $\bv_h = \mathbf{0}$ and $S_h = \overline{T}_h \in \bVV_h$ in the corresponding systems and obtain
\begin{equation*}
\mathfrak{a}_h(T_h;T_h,\overline{T}_h) + \mathfrak{c}_h^{\textit{skew}}(\bu_h;T_h,\overline{T}_h) = \mathfrak{a}_h(\widehat{T}_h;\widehat{T}_h,\overline{T}_h) + \mathfrak{c}_h^{\textit{skew}}(\widehat{\bu}_h;\widehat{T}_h,\overline{T}_h).
\end{equation*}
We add and subtract the term $\mathfrak{a}_h(\widehat{T}_h;T_h,\overline{T}_h)$ and use \eqref{eq:skew1} to obtain
\begin{equation*}
\mathfrak{a}_h(\widehat{T}_h;\overline{T}_h,\overline{T}_h) = \left[\mathfrak{a}_h(\widehat{T}_h;T_h,\overline{T}_h)-\mathfrak{a}_h(T_h;T_h,\overline{T}_h)\right] - \mathfrak{c}_h^{\textit{skew}}(\overline{\bu}_h;T_h,\overline{T}_h).
\end{equation*}
We then use the coercivity property in \eqref{eq:ahtemp_bound}, the bound \eqref{eq:chtemp_bound}, and the estimate of Lemma \ref{estimatemp} to obtain
\begin{equation}\label{temperatura}
\beta_{*}\kappa_{*}|\overline{T}_h|_{1,\O}^{2} \leq C_{T}\kappa_{lip}|\overline{T}_h|_{1,\O}^{2}|T_h|_{\W_q^1(\CT_h)} + \widehat{\mathfrak{C}}|\overline{\bu}_h|_{1,\O}|T_{h}|_{1,\O}|\overline{T}_h|_{1,\O}.
\end{equation}
This yields
$
(\beta_{*}\kappa_* - C_{T}\kappa_{lip}|T_h|_{\W_q^1(\CT_h)})|\overline{T}_h|_{1,\O} \leq \widehat{\mathfrak{C}}|\overline{\bu}_h|_{1,\O}|T_h|_{1,\O}.
$
Note that $\beta_{*}\kappa_* - C_{T}\kappa_{lip}|T_h|_{\W_q^1(\CT_h)}>0$. We now invoke the stability bound for $T_h$ in \eqref{exi1} to obtain
\begin{equation}\label{uniq1}
|\overline{T}_h|_{1,\O} \leq \dfrac{\widehat{\mathfrak{C}}\textsf{C}\|g_h\|_{0,\Omega}}{\beta_*\kappa_*(\beta_{*}\kappa_* - C_T\kappa_{lip}|T_h|_{\W_q^1(\CT_h)})}|\overline{\bu}_h|_{1,\O}.
\end{equation}

\emph{Step 2. We now obtain a bound for $\overline{\bu}_h$.} To do this, we first note that, due to the skew-symmetry of $c_{N,h}^{\textit{skew}}(\cdot;\cdot,\cdot)$, we have the following
\begin{equation}\label{eq:skew2}
c_{N,h}^{\textit{skew}}(\widehat{\bu}_h;\overline{\bu}_h,\overline{\bu}_h) = 0 
\implies c_{N,h}^{\textit{skew}}(\widehat{\bu}_h;\bu_h,\overline{\bu}_h) = c_{N,h}^{\textit{skew}}(\widehat{\bu}_h;\widehat{\bu}_h,\overline{\bu}_h).
\end{equation}
We now use the fact that $(\bu_h,T_h)$ and $(\widehat{\bu}_h,\widehat{T}_h)$ solve system \eqref{def:weak_BFH_kernel_discrete} to set $\bv_h = \overline{\bu}_h \in \bV_h$ in the first equation of the corresponding systems and obtain
\begin{multline*}
a_h(T_h;\bu_h,\overline{\bu}_h) + c_{N,h}^{\textit{skew}}(\bu_h;\bu_h,\overline{\bu}_h) + c_{F,h}(\bu_h;\bu_h,\overline{\bu}_h) + d_h(\bu_h,\overline{\bu}_h) \\
= a_h(\widehat{T}_h;\widehat{\bu}_h,\overline{\bu}_h) + c_{N,h}^{\textit{skew}}(\widehat{\bu}_h;\widehat{\bu}_h,\overline{\bu}_h) + c_{F,h}(\widehat{\bu}_h;\widehat{\bu}_h,\overline{\bu}_h) + d_h(\widehat{\bu}_h,\overline{\bu}_h).
\end{multline*}
We add and subtract the term $a_h(\widehat{T}_h;\bu_h,\overline{\bu}_h)$ and use \eqref{eq:skew2} to obtain
\begin{multline*}
a_h(\widehat{T}_h;\overline{\bu}_h,\overline{\bu}_h) + \left[c_{F,h}(\bu_h;\bu_h,\overline{\bu}_h) - c_{F,h}(\widehat{\bu}_h;\widehat{\bu}_h,\overline{\bu}_h)\right] + d_h(\overline{\bu}_h,\overline{\bu}_h)\\
 = \left[a_h(\widehat{T}_h;\bu_h,\overline{\bu}_h) - a_h(T_h;\bu_h,\overline{\bu}_h)\right] - c_{N,h}^{\textit{skew}}(\overline{\bu}_h;\bu_h,\overline{\bu}_h). 
\end{multline*}
We then use the left-hand side bound in \eqref{eq:ah_bound}, \cite[Chapter I, Lemma 4.4]{dibenedetto2012degenerate}, the bound in Lemma \ref{estimavel}, \eqref{eq:ch_bound}, and the fact that $d_h(\overline{\bu}_h,\overline{\bu}_h)\geq 0$ to obtain
\begin{equation}\label{uniq2}
\alpha_{*}\nu_*|\overline{\bu}_h|_{1,\O}^{2} \leq C_{V}\nu_{lip}|\overline{T}_h|_{1,\O}|\bu_h|_{\mtW_q^1(\CT_h)}|\overline{\bu}_h|_{1,\O} + \widehat{C}_{N}|\overline{\bu}_h|_{1,\O}^{2}|\bu_h|_{1,\O}.
\end{equation}
We now use the bound for $\bu_h$ in Theorem \ref{eq:existence} and \eqref{uniq1} to arrive at
\begin{align*}
\alpha_*\nu_{*}|\overline{\bu}_h|^2_{1,\O} &\leq \dfrac{C_{V}\nu_{lip}\widehat{\mathfrak{C}}\textsf{C}\|g_h\|_{0,\O}|\bu_h|_{\mtW_q^1(\CT_h)}}{\beta_*\kappa_*(\beta_*\kappa_{*}-C_T\kappa_{lip}|T_h|_{\W_q^1(\CT_h)})}|\overline{\bu}_h|^2_{1,\O} + \dfrac{\widehat{C}_{N}\textsf{C}}{\alpha_*\nu_*}\|\boldsymbol{f}_h\|_{0,\O}|\overline{\bu}_h|^2_{1,\O}.
% &\leq \left(\dfrac{C_{V}\widehat{\mathfrak{C}}\|g_h\|_{-1}}{\beta_*\kappa_*(\beta_*\kappa_{*}-C_T\kappa_{lip}|T_h|_{\W_q^1(\CT_h)})}+\dfrac{\widehat{C}_{N}}{\alpha_*\nu_*}\|\boldsymbol{f}_h\|_{-1}\right)|\overline{\bu}_h|_{1,\O}.
\end{align*}
The previous inequality allows us to conclude that 
\begin{equation*}
\left(1-\dfrac{C_{V}\nu_{lip}\widehat{\mathfrak{C}}\textsf{C}\|g_h\|_{0,\O} |\bu_h|_{\mtW_q^1(\CT_h)} }{\alpha_*\nu_*\beta_*\kappa_*(\beta_*\kappa_{*}-C_T\kappa_{lip}|T_h|_{\W_q^1(\CT_h)})}
-
\dfrac{\widehat{C}_{N}\textsf{C}}{\alpha_*^2\nu_*^2}\|\boldsymbol{f}_h\|_{0,\O}\right)|\overline{\bu}_h|^2_{1,\O} \leq 0.
\end{equation*}

\emph{Step 3}. In view of \eqref{eq:uniq}, we immediately conclude that $\overline{\bu}_h = 0$. We now invoke \eqref{uniq1} to obtain that $\overline{T}_h = 0$. Finally, the inf-sup condition \eqref{eq:infsupdiscreta} and the arguments developed in Step 2 show that $\overline{\textsf{p}}_h = 0$. This concludes the proof.
\end{proof}

%%%%%%%%%%%%%%%%%%%%%%%%%%%%%%%%%%%%%%%%%%%%%%%%%%%%%%%%%%%%
%%%%%%%%%%%%%%%%%%%%%%%%%%%%%%%%%%%%%%%%%%%%%%%%%%%%%%%%%%%%
%%%%%%%%%%%%%%%%%%%%%%%%%%%%%%%%%%%%%%%%%%%%%%%%%%%%%%%%%%%%
%%%%%%%%%%%%%%%%%%%%%%%%%%%%%%%%%%%%%%%%%%%%%%%%%%%%%%%%%%%%
%%%%%%%%%%%%%%%%%%%%%%%%%%%%%%%%%%%%%%%%%%%%%%%%%%%%%%%%%%%%
%%%%%%%%%%%%%%%%%%%%%%%%%%%%%%%%%%%%%%%%%%%%%%%%%%%%%%%%%%%%
%%%%%%%%%%%%%%%%%%%%%%%%%%%%%%%%%%%%%%%%%%%%%%%%%%%%%%%%%%%%

\section{Error estimates for our VEM}
\label{sec:error}
In the following, we derive error bounds for the formulation with virtual elements \eqref{def:weak_BFH_discrete}. For this purpose, we will make the following regularity assumptions:

\begin{itemize}
\item[\textbf{A4})] The solutions $(\bu,\textsf{p},T) \in \bV\times\Q\times\bVV$ and the data $\boldsymbol{f},g,\kappa,\nu$ of system \eqref{def:weak_BFH_discrete} satisfy the following regularity properties for some $0<s\leq k$:
\begin{itemize}
\item[i)] $\bu \in \mathbf{H}^{s+1}(\O)$, $\textsf{p} \in \H^s(\O)$, and $T\in \H^{s+1}(\O)$.
\item[ii)] $\boldsymbol{f} \in \mathbf{H}^{s+1}(\O)$ and $g \in \H^{s+1}(\O)$.
\item[iii)] $\nu(T)$ and $\kappa(T)$ belong to $\W^{s}_{\infty}(\O)$.
\end{itemize}
\end{itemize}

Before deriving a priori error estimates, we recall some preliminary approximation properties.
% , while a proof of the second result can be found in \cite[Theorem 11]{CETS}.
% \cite{MR3796371,CETS,MR3340705}.

\begin{lemma}[an approximation property for $\bVh$]
\label{eq:interpolantvel}
Assume that \textbf{A2}) and \textbf{A3}) hold. Let $\bv \in \bV \cap \mathbf{H}^{s+1}(\O)$. Then, there exists $\bv_{I} \in \bVh$ such that
\begin{equation*}
\|\bv - \bv_{I}\|_{0,E} + h|\bv - \bv_{I}|_{1,E} \lesssim h_E^{s+1}|\bv|_{s+1,D(E)}.
\end{equation*}
for all $E \in \CT_h$. Here, $D(E)$ denote the union of the polygons in $\CT_h$ intersecting $E$ and $0<s\leq k$. Moreover, if $\bv\in\bZ$, then $\bv_I \in \bZh$.
\end{lemma}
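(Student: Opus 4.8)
The plan is to build $\bv_I$ element by element from the canonical degrees of freedom and then combine polynomial preservation with a Bramble--Hilbert/scaling argument. First I would define, for each $E\in\CT_h$, the local interpolation operator $\mathcal{I}_h^E\colon \mathbf{H}^{s+1}(E)\to\bVh(E)$ by imposing that $\mathbf{D}_{\mathbf{V}}(\mathcal{I}_h^E\bv)=\mathbf{D}_{\mathbf{V}}(\bv)$ for the full set of DoFs $\mathbf{D}_{\mathbf{V}}\mathbf{1}$--$\mathbf{D}_{\mathbf{V}}\mathbf{4}$ of \textbf{(P2)}. This is well posed by the unisolvence of $\mathbf{D}_{\mathbf{V}}$ on $\bVh(E)$; since $s>0$ gives $\mathbf{H}^{s+1}(E)\hookrightarrow[\mathcal{C}^0(\overline{E})]^2$ in two dimensions, the vertex and edge evaluations $\mathbf{D}_{\mathbf{V}}\mathbf{1}$, $\mathbf{D}_{\mathbf{V}}\mathbf{2}$ are meaningful. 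Gluing the local pieces across shared edges yields a global $\bv_I\in\bVh$, the interelement continuity being guaranteed by the common edge DoFs. Because $[\mathbb{P}_k(E)]^2\subseteq\bVh(E)$ by \textbf{(P1)} and the DoFs are unisolvent, $\mathcal{I}_h^E$ reproduces vector polynomials of degree at most $k$, i.e. $\mathcal{I}_h^E\mathbf{q}=\mathbf{q}$ for all $\mathbf{q}\in[\mathbb{P}_k(E)]^2$.

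Next I would establish the local bound. Let $\bv_\pi\in[\mathbb{P}_k(E)]^2$ be the polynomial approximant supplied by standard polynomial approximation on domains star-shaped with respect to a ball (assumption \textbf{A2})), so that $|\bv-\bv_\pi|_{m,E}\lesssim h_E^{s+1-m}|\bv|_{s+1,E}$ for $m\in\{0,1\}$ with constants depending only on $k$ and $\varrho$. Writing $\bv-\bv_I=\bw-\mathcal{I}_h^E\bw$ with $\bw:=\bv-\bv_\pi$ (using $\mathcal{I}_h^E\bv_\pi=\bv_\pi$) reduces matters to bounding $\|\mathcal{I}_h^E\bw\|_{0,E}+h_E|\mathcal{I}_h^E\bw|_{1,E}$. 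The key ingredient is the stability of $\mathcal{I}_h^E$: mapping $E$ to a reference configuration and using \textbf{A2})--\textbf{A3}) to keep all constants shape-uniform, the $H^1$ norm of the virtual function $\mathcal{I}_h^E\bw$ is equivalent to a suitably scaled $\ell^2$ norm of its DoFs, which coincide with the DoFs of $\bw$; each such value is controlled by $h_E^{-1}\|\bw\|_{0,E}+|\bw|_{1,E}$ (edge evaluations via a scaled trace inequality and the embedding, the $\mathbf{m}^\perp$-weighted interior moments $\mathbf{D}_{\mathbf{V}}\mathbf{3}$ and the divergence moments $\mathbf{D}_{\mathbf{V}}\mathbf{4}$ by Cauchy--Schwarz). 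Inserting the polynomial estimates gives the elementwise bound with $|\bv|_{s+1,E}$ on the right; since the interpolant is built purely from the DoFs of $\bv$ on $E$, this estimate is in fact local, and the stated patch follows from $E\subseteq D(E)$, i.e. $|\bv|_{s+1,E}\le|\bv|_{s+1,D(E)}$ (and $h_E\le h$).

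For the divergence-free assertion I would use the commuting property intrinsic to these spaces. By construction of $\bWh(E)$ one has $\div\bv_h\in\mathbb{P}_{k-1}(E)$, and the divergence moments $\mathbf{D}_{\mathbf{V}}\mathbf{4}$ together with the boundary DoFs $\mathbf{D}_{\mathbf{V}}\mathbf{1}$--$\mathbf{D}_{\mathbf{V}}\mathbf{2}$ fix $\int_E\div\bv_h\,\mathbf{m}_{\boldsymbol{\alpha}}$ for every $\mathbf{m}_{\boldsymbol{\alpha}}\in\mathbb{M}_{k-1}(E)$, the constant mode being determined through $\int_E\div\bv_h=\int_{\partial E}\bv_h\cdot\bn$. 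Hence $\div\mathcal{I}_h^E\bv=\Pi_{k-1}^{0,E}\div\bv$; see \cite{MR3626409,MR3796371}. If $\bv\in\bZ$ then $\div\bv=0$, so $\div\bv_I=0$ on each $E$, and therefore $b(\bv_I,\textsf{q}_h)=0$ for all $\textsf{q}_h\in\Q_h$, i.e. $\bv_I\in\bZh$ by the definition \eqref{eq:Z_h}.

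The main obstacle I expect is the stability estimate for $\mathcal{I}_h^E$: unlike in the finite element setting, the interpolant is a non-explicit virtual function, so bounding $\|\mathcal{I}_h^E\bw\|_{1,E}$ requires the shape-uniform norm equivalence between a virtual function and its DoF vector, which is precisely where \textbf{A2})--\textbf{A3}) and the careful scaling of the edge evaluations and the weighted interior moments enter. This is exactly the content already developed for this family of spaces in \cite{MR3626409,MR3796371,AVV}, from which the estimate can be imported.
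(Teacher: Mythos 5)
Your overall scaffolding (DoF-based interpolation, polynomial reproduction via \textbf{(P1)}, Bramble--Hilbert, shape-uniform stability) is the classical route, and since the paper's own proof is only a citation to \cite{MR3796371} and \cite{MR3626409}, a self-contained sketch is in principle welcome. However, two steps of your argument fail as stated, and the first one is fatal for precisely the claim this paper needs most.

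The divergence-free assertion does not follow from matching the DoFs of \textbf{(P2)}. Because $\mathbf{D}_{\mathbf{V}}\mathbf{1}$--$\mathbf{D}_{\mathbf{V}}\mathbf{2}$ are \emph{point values}, your $\bv_I$ restricted to an edge $e$ is the Lagrange interpolant of $\bv$ at $k+1$ points, and Lagrange interpolation does not preserve edge integrals: in general $\int_{e}(\bv-\bv_I)\cdot\bn \neq 0$ (for $k=2$ this is exactly the Simpson-rule error of $\bv\cdot\bn$). Now observe that $\mathbf{D}_{\mathbf{V}}\mathbf{4}$ fixes only the moments of $\div\bv_I$ against \emph{non-constant} monomials ($|\boldsymbol{\alpha}|>0$); the constant mode is determined by the flux, $\int_E \div\bv_I = \int_{\partial E}\bv_I\cdot\bn$, which differs from $\int_{\partial E}\bv\cdot\bn=\int_E\div\bv$. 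So for $\bv\in\bZ$ your interpolant satisfies $\div\bv_I = \gamma_E/|E|$ on each $E$ with $\gamma_E=\int_{\partial E}\bv_I\cdot\bn$ generically nonzero, hence $\div \bv_I\neq 0$ and $\bv_I\notin\bZh$ (testing $b(\bv_I,\cdot)$ against piecewise constants in $\Q_h$ detects exactly these fluxes). Consequently $\div\bv_I=\Pi^{0,E}_{k-1}\div\bv$ is \emph{not} a consequence of DoF matching; in the cited references this commuting property holds because the interpolant is built differently --- the boundary datum is taken so that edge moments (in particular the flux) of $\bv$ are preserved, e.g. through edge $\L^2$-projections or a local Stokes-type problem with prescribed divergence --- and that is the construction the paper implicitly relies on. This matters concretely: $\bu_I\in\bZh$ (so that $\boldsymbol{e}_h\in\bZh\subseteq\bZ$) is used in Steps 1 and 3 of the proof of Theorem \ref{thm:error_estimates}.

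Separately, your stability step claims each DoF value of $\bw=\bv-\bv_\pi$ is controlled by $h_E^{-1}\|\bw\|_{0,E}+|\bw|_{1,E}$. For vertex and edge point evaluations this is false in two dimensions, since $\H^1(E)\not\hookrightarrow\L^\infty(E)$; point values are only controlled if you retain the fractional regularity, e.g. $\|\bw\|_{\L^\infty(E)}\lesssim h_E^{-1}\|\bw\|_{0,E}+|\bw|_{1,E}+h_E^{s}|\bw|_{1+s,E}$ after scaling. This gap is fixable (it is also why the literature routes the argument through a Cl\'ement-type quasi-interpolant, which is the origin of the patch $D(E)$ in the statement), but as written the step fails, and together with the point above it shows that the ``import from \cite{MR3626409,MR3796371}'' you invoke at the end cannot be applied to the operator you actually defined.
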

\begin{proof}
A proof of the error bound follows from the arguments in the proof of \cite[Theorem 4.1]{MR3796371} in combination with the error bound of \cite[Proposition 4.2]{MR3626409}.
\end{proof}

\begin{lemma}[an approximation property for $\bVVh$]
\label{eq:interpolanttemp}
Assume that \textbf{A2)} and \textbf{A3)} hold. Let $S \in \bVV \cap \H^{s+1}(\O)$. Then, there exists $S_{I} \in \bVVh$ such that
\begin{equation*}
\|S - S_{I}\|_{0,E} + h|S - S_{I}|_{1,E} \lesssim h_E^{s+1}|S|_{s+1,D(E)}.
\end{equation*}
for all $E \in \CT_h$. Here, $D(E)$ denote the union of the polygons in $\CT_h$ intersecting $E$ and $0<s\leq k$.
\end{lemma}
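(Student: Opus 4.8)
The plan is to mirror the argument for the velocity interpolant in Lemma~\ref{eq:interpolantvel}, adapting each ingredient to the scalar virtual element space $\bVVh(E)$, and then to sum the local estimates using the shape-regularity assumptions \textbf{A2)} and \textbf{A3)}. First, I would construct a local interpolation operator $I_h^E : \H^{s+1}(E) \to \bVVh(E)$ by prescribing that $I_h^E S$ share with $S$ all the degrees of freedom $D_V1$, $D_V2$, $D_V3$. Because $\bVVh(E)$ is unisolvent with respect to this set of DoFs, $I_h^E S$ is well defined; the pointwise evaluations in $D_V1$ and $D_V2$ are meaningful since $\H^{s+1}(E) \hookrightarrow \mathcal{C}^0(\overline{E})$ in two dimensions for $s>0$. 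Gluing the local interpolants produces a global $S_I$, and conformity $S_I \in \bVV$ follows from the matching of the vertex and edge DoFs across shared edges.

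Next, I would invoke the classical Bramble--Hilbert/Dupont--Scott theory: under the star-shapedness assumption \textbf{A2)}, there is a polynomial $S_\pi \in \mathbb{P}_k(E)$ with
\[
\|S - S_\pi\|_{0,E} + h_E |S - S_\pi|_{1,E} \lesssim h_E^{s+1} |S|_{s+1,E}.
\]
I would then split $S - S_I = (S - S_\pi) + (S_\pi - S_I)$. The polynomial inclusion property \textbf{(P3)} gives $S_\pi \in \bVVh(E)$, so the interpolant reproduces it, $I_h^E S_\pi = S_\pi$, whence $S_\pi - S_I = I_h^E(S_\pi - S)$ on $E$. It therefore suffices to bound $\|I_h^E w\|_{0,E}$ and $|I_h^E w|_{1,E}$ for $w := S_\pi - S$.

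The main obstacle --- indeed the only nonroutine step --- is the scaled stability estimate
\[
\|I_h^E w\|_{0,E} + h_E |I_h^E w|_{1,E} \lesssim \|w\|_{0,E} + h_E|w|_{1,E} + h_E^{s+1}|w|_{s+1,E},
\]
with constants depending only on $k$ and $\varrho$. This requires controlling every DoF of $w$ by scaled Sobolev norms: the interior moments $D_V3$ are handled by Cauchy--Schwarz in $\L^2(E)$, while the vertex and edge values $D_V1$, $D_V2$ are controlled through a scaled trace inequality combined with an inverse inequality on the edges, both valid under \textbf{A2)} and \textbf{A3)}. Once this bound is in hand, inserting $w = S_\pi - S$, using the Bramble--Hilbert estimate above, and summing over $E \in \CT_h$ --- with the finite overlap of the patches $D(E)$ guaranteed by shape regularity --- delivers the claimed estimate. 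Since this is exactly the scalar counterpart of the reasoning for $\bVh$, in the write-up I would follow the proof of \cite[Theorem 4.1]{MR3796371} together with the approximation results in \cite{MR4567234}.
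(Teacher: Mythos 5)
For calibration: the paper does not actually prove this lemma --- its entire proof is the citation ``A proof of this error bound can be found in \cite[Theorem 11]{CETS}.'' So the question is whether your from-scratch sketch would survive on its own. The skeleton you propose (DoF interpolant $I_h^E$, triangle inequality through a Bramble--Hilbert polynomial $S_\pi$, polynomial reproduction via \textbf{(P3)}, reduction to a scaled stability estimate for $I_h^E$) is the standard Ciarlet-type frame and is the right one; the gap sits exactly in the step you yourself call the crux.

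In a finite element space, bounding the DoF functionals $\mathrm{dof}_i(w)$ finishes the argument because $I_h^E w=\sum_i \mathrm{dof}_i(w)\,\phi_i$ with \emph{explicit} shape functions $\phi_i$ whose norms are controlled by scaling. In $\bVVh(E)$ the functions dual to $D_V1$--$D_V3$ are virtual: they solve a local elliptic problem inherited from $\bUh(E)$ and are further constrained by the enhancement condition $(S_h-\Pi^{\nabla,E}_k S_h,q_k)_{\L^2(E)}=0$ for $q_k\in\mathbb{P}_{k\setminus k-2}(E)$. Consequently, Cauchy--Schwarz for the moments plus trace-type bounds controls the \emph{numbers} $\mathrm{dof}_i(w)$, but by itself it does not control $\|I_h^E w\|_{0,E}+h_E|I_h^E w|_{1,E}$. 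What is missing is a norm-versus-DoF equivalence for virtual functions (equivalently, a stability/inverse estimate for the virtual basis), with constants depending only on $k$ and $\varrho$ --- the same nontrivial fact that underlies the stabilization bounds \eqref{eq:S_T}. Its proof goes through energy arguments on the PDE characterization of $\bUh(E)$ (integration by parts against the polynomial Laplacian, control of the degree $k-1,k$ moments that the enhancement constraint only determines implicitly), and is precisely the content of the references the paper and you both lean on. Two smaller slips in the same step: an ``inverse inequality on the edges'' cannot be applied to $w=S_\pi-S$, which is not an edge polynomial (what you need there is a scaled embedding $\H^{1+s}(E)\hookrightarrow\mathcal{C}^0(\overline{E})$, $s>0$); and for the enhanced space the $\L^2(E)$ bound on $I_h^E w$ involves moments that are not DoFs at all. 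Your closing sentence --- deferring to \cite[Theorem 4.1]{MR3796371} and \cite{MR4567234} --- is in effect what the paper does by citing \cite[Theorem 11]{CETS}; the honest write-up should invoke that literature specifically for the DoF-to-norm stability of the virtual interpolant, rather than present it as a consequence of trace and inverse inequalities.
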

\begin{proof}
A proof of this error bound can be found in \cite[Theorem 11]{CETS}.
\end{proof}

We will also make use of a Bramble--Hilbert Lemma \cite[Lemma 4.3.8]{MR2373954}: Let $0 \leq t \leq s \leq \ell+1$ and $1 \leq q,p \leq \infty$ such that $s-2/p > t-2/q$. Then, for any $E\in\CT_h$,
\begin{equation}\label{eq:Bramblehilbert}
\underset{q_{\ell} \in \mathbb{P}_\ell(E) }{\inf} |S - q_{\ell} |_{W_{q}^{t}(E)} \lesssim h_E^{s-t+2/q-2/p}|S|_{W_{p}^s(E)} \qquad \forall S \in W_p^s(E).
\end{equation}

% \DA{It follows from \eqref{eq:Bramblehilbert} that, for $p=q\geq 2$ and $s=t$ we have
% \begin{equation*}
% \|v-\Pi^{0,E}_k v\|_{\L^p(E)} \lesssim \|v\|_{\L^p(E)} \quad \forall E\in\CT_h.
% \end{equation*}
% As a consequence of the previous estimate, we obtain the continuity of the projection operator $\Pi^{0,E}_k$ in the $\L^p$-norm (see \cite[Lemmas 3.7 and 3.8]{GMS}):
% \begin{equation*}
% \|\Pi^{0,E}_k v\|_{\L^p(E)} \lesssim \|v\|_{\L^p(E)} \quad \forall E\in\CT_h.
% \end{equation*}}

% From now on, the subscript s in the constant ${C}_s$ will denote the dependence of $C$ on the scalar or vectorial $\H^{s}$ seminorm (resp. scalar or vectorial $\H^{1+s}$ seminorm) of the solutions $\bu,\textsf{p}$ and $T$.

To simplify the presentation of the material, we will write the continuous and discrete stability bounds for $(\bu,T)$ and $(\bu_h,T_h)$ as in \cite{AVV}. This is
\begin{equation}
\label{eq:stability_bounds_continuous_discrete}
|\bu|^2_{1,\O}+|T|^2_{1,\O} \leq C^2_{\mathrm{est}}C^{-2}_{\mathrm{data}},
\qquad
|\bu_h|^2_{1,\O}+|T_h|^2_{1,\O} \leq \widehat{C}^2_{\mathrm{est}}\widehat{C}^{-2}_{\mathrm{data}}.
\end{equation}
As in \cite{AVV},  we will also use the following notation here: We denote by $\mathfrak{A}(\cdot)$, $\mathfrak{B}(\cdot)$, $\mathfrak{C}(\cdot)$, $\mathfrak{D}(\cdot)$, etc. generic constants that are independent of $h$, but may depend on appropriate norms of $\bu$, $p$, $T$, $\nu$, $\kappa$, $\boldsymbol{f}$, $g$, $\Omega$, or the polynomial degree $k$.

We are now ready to state and prove the main result of this section.

\begin{theorem}[error estimates]
Let us assume that the assumptions \textbf{A0}), \textbf{A1}), \textbf{A2}), \textbf{A3}), and \textbf{A4}) hold. Let us also assume that the smallness assumptions of Proposition \ref{pro:existence3} and Theorem \ref{eq:uniqueness} hold. Let $(\bu,\textsf{p},T) \in \bV \times \Q \times \bVV$ and $(\bu_h,\textsf{p}_h,T_h) \in \bV_h\times\Q_h\times\bVV_h$ be the unique solutions of problems \eqref{def:weak_BFH} and \eqref{def:weak_BFH_discrete}, respectively. If
\begin{equation*}
\label{eq:condicionerror}
\begin{split}
C_{\mathrm{final},T} := \frac{\widehat{\frak{C}}C_{\mathrm{est}}}{C_{\mathrm{data}}}
\left(
\beta_{*}\kappa_{*} - \frac{C_{\mathrm{lip}}(T)}{C_{\mathrm{sol}}(T)}
\right
)^{-1} > 0,\\
C_{\mathrm{final},\bu}^{-1}:= \left( \alpha_*\nu_* - \frac{\widehat{C}_N C_{\mathrm{est}}}{C_{\mathrm{data}}} - C_{\mathrm{for}} -  \textsf{C}^2 - \frac{C_{\mathrm{lip}}(\bu)}{C_{\mathrm{sol}}(\bu)}C_{\mathrm{final},T} \right) > 0,
\end{split}
\end{equation*}
then the following a priori error estimates hold
\begin{equation*}
\begin{aligned}
|\bu-\bu_h|_{1,\O}+|T-T_h|_{1,\O}
& \leq
\mathfrak{M}(\nu,\kappa,\bu,T)h^s
+
\mathfrak{N}(\boldsymbol{f},g)h^{s+2}
\\
\| \textsf{p}- \textsf{p}_h\|_{0,\O}
& \leq
\mathfrak{O}(\nu,\kappa,\bu,T,p)h^s
+
\mathfrak{N}(\boldsymbol{f},g)h^{s+2}.
\end{aligned}
\end{equation*}
\label{thm:error_estimates}
\end{theorem}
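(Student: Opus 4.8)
The plan is to follow the standard VEM error-analysis paradigm adapted to the coupled nonlinear setting: combine the interpolation estimates of Lemmas~\ref{eq:interpolantvel} and~\ref{eq:interpolanttemp} with the polynomial-consistency errors controlled through the Bramble--Hilbert bound~\eqref{eq:Bramblehilbert}, and close the resulting mutually coupled system of inequalities using the positivity of $C_{\mathrm{final},T}$ and $C_{\mathrm{final},\bu}^{-1}$. \emph{Step 1 (error splitting).} First I would introduce the interpolant $\bu_I \in \bZh$, which lies in the discrete kernel because $\bu \in \bZ$ by the last assertion of Lemma~\ref{eq:interpolantvel}, together with $T_I \in \bVVh$ and suitable elementwise $\L^2$-projection polynomials. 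Writing $\bu - \bu_h = (\bu - \bu_I) + (\bu_I - \bu_h)$ and $T - T_h = (T - T_I) + (T_I - T_h)$, I set $\eu := \bu_I - \bu_h \in \bZh$ and $e_T := T_I - T_h \in \bVVh$. The interpolation pieces are bounded by $h^s$ through the two lemmas, so the entire task reduces to estimating the discrete parts $\eu$ and $e_T$.

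\emph{Step 2 (temperature error).} I would estimate the temperature first. Testing the heat equation of the reduced formulation~\eqref{def:weak_BFH_kernel_discrete} with $S_h = e_T$ and inserting the exact solution of~\eqref{def:weak_BFH}, coercivity~\eqref{eq:ahtemp_bound} puts $\beta_*\kappa_*|e_T|_{1,\O}^2$ on the left. On the right, adding and subtracting $\mathfrak{a}_h(T_h;T_I,e_T)$ and invoking the Lipschitz Lemma~\ref{estimatemp} produces a term $C_T\kappa_{lip}|e_T|_{1,\O}^2|T|_{\W_q^1(\CT_h)}$ that is moved to the left (this is where condition~\eqref{eq:uniq_2}, equivalently the positivity of $C_{\mathrm{final},T}$, is used); the convective contribution is handled by continuity~\eqref{eq:chtemp_bound}, yielding a factor $|\eu|_{1,\O}$; and the genuine approximation residuals are estimated by~\eqref{eq:Bramblehilbert}, giving $O(h^s)$, together with an $O(h^{s+2})$ term from $g_h = \Pi^{0,E}_k g$ with $g \in \H^{s+1}(\O)$. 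The outcome is a bound of the form $|e_T|_{1,\O} \lesssim C_{\mathrm{final},T}|\eu|_{1,\O} + \mathfrak{A}(\cdot)h^s + \mathfrak{N}(g)h^{s+2}$.

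\emph{Step 3 (velocity error) and conclusion.} Testing the momentum equation of~\eqref{def:weak_BFH_kernel_discrete} with $\bv_h = \eu \in \bZh$, the pressure and divergence terms vanish since $\eu$ is exactly divergence-free (Remark~\ref{rem:divergence_free}), the decisive simplification of the divergence-free VEM. Coercivity~\eqref{eq:ah_bound} gives $\alpha_*\nu_*|\eu|_{1,\O}^2$; on the right I would treat, in order, the viscosity--temperature coupling via Lemma~\ref{estimavel} (producing a factor $|T - T_h|_{1,\O}$, hence via Step~2 a factor $|\eu|_{1,\O}$ plus temperature-approximation terms), the convective term via~\eqref{eq:ch_bound}, the Forchheimer term via~\eqref{eq:cFh_bound} together with the monotonicity inequality \cite[Chapter I, Lemma 4.4]{dibenedetto2012degenerate} already used in Theorem~\ref{eq:uniqueness} (whose continuity constant furnishes the $C_{\mathrm{for}}$ contribution), the lower-order term via~\eqref{eq:dh_bound}, and the remaining polynomial residuals via~\eqref{eq:Bramblehilbert}. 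After inserting the stability bounds~\eqref{eq:stability_bounds_continuous_discrete}, the coefficient multiplying $|\eu|_{1,\O}$ on the left becomes exactly $C_{\mathrm{final},\bu}^{-1}$, so its positivity lets me solve for $|\eu|_{1,\O} \lesssim \mathfrak{M}h^s + \mathfrak{N}h^{s+2}$; back-substitution into Step~2 bounds $|e_T|_{1,\O}$, and the triangle inequality with the interpolation estimates yields the stated velocity-plus-temperature estimate. For the pressure, subtracting the discrete and continuous momentum equations and isolating $b(\bv_h,\textsf{p}-\textsf{p}_h)$, the discrete inf-sup condition~\eqref{eq:infsupdiscreta} gives $\tilde{\beta}\|\textsf{p}-\textsf{p}_h\|_{0,\O} \le \sup_{\bv_h}b(\bv_h,\textsf{p}-\textsf{p}_h)/|\bv_h|_{1,\O}$, whose numerator is controlled by the now-bounded nonlinear and consistency residuals, producing the $h^s + h^{s+2}$ rate.

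The main obstacle I expect is the bookkeeping of the coupled system: the viscosity depends on $T$ while the convective heat term depends on $\bu$, so the two discrete error bounds are mutually referential and must be ordered (temperature first, velocity second) with the two smallness conditions invoked precisely so that the self-referential $|\eu|_{1,\O}$ and $|e_T|_{1,\O}$ contributions can be absorbed on the left. A secondary difficulty is the uniform control of the nonlinear Forchheimer consistency term, which hinges on the embedding estimates behind~\eqref{eq:cFh_bound} and on the monotonicity structure of $|\cdot|^{r-2}$ in order to avoid losing powers of $h$.
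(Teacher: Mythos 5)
Your proposal is correct in outline and reproduces the architecture of the paper's proof: the same splitting with $\boldsymbol{e}_h=\bu_I-\bu_h\in\bZh$ and $E_h=T_I-T_h$, the same ordering (temperature bound in terms of $|\boldsymbol{e}_h|_{1,\O}$ first, then velocity with absorption of the self-referential terms, then pressure via inf-sup), and the same closing mechanism in which the two positivity conditions allow the coupled system of inequalities to be solved. Within this shared skeleton, however, you substitute different lemmas at two points, and this changes the constants. \emph{(i)} For the $\kappa$- and $\nu$-Lipschitz couplings you invoke the discrete Lemmas \ref{estimatemp} and \ref{estimavel} together with \eqref{eq:uniq_2}, i.e.\ smallness of $|T_h|_{\W_q^1(\CT_h)}$ (and, implicitly, boundedness of $|T_I|_{\W_q^1(\CT_h)}$ or $|\bu_I|_{\mtW_q^1(\CT_h)}$, whichever function occupies the middle slot). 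The paper instead keeps the \emph{continuous} solution inside the Lipschitz difference (the term $\kappa(\Pi_k^{0,E}T_h)-\kappa(T)$), bounds it with H\"older's inequality and the $\L^p$-stability of the projection, and absorbs it using $|T|_{\W_q^1(\O)}<C_{\mathrm{sol}}(T)^{-1}$ from Proposition \ref{pro:existence3}. Both devices are available under the theorem's hypotheses, but they are \emph{not} equivalent as you assert: \eqref{eq:uniq_2} constrains the discrete solution while the positivity of $C_{\mathrm{final},T}$ constrains the continuous one, and with your choice the factor multiplying $|\boldsymbol{e}_h|_{1,\O}$ in the temperature bound is not exactly $C_{\mathrm{final},T}$, so your claim that the velocity coefficient ``becomes exactly $C_{\mathrm{final},\bu}^{-1}$'' only holds if you run the absorption the paper's way (or redefine the constants accordingly). \emph{(ii)} For the Forchheimer difference the paper does not use monotonicity at all; it bounds both pieces of $c_{F,h}(\bu_h;\bu_h,\boldsymbol{e}_h)-c_F(\bu;\bu,\boldsymbol{e}_h)$ with the Lipschitz/consistency estimates of \cite[Lemmas 5.2 and 5.3]{NMTMA-17-210}, which is precisely where the term $C_{\mathrm{for}}|\boldsymbol{e}_h|_{1,\O}^2$, and hence $C_{\mathrm{for}}$ inside $C_{\mathrm{final},\bu}^{-1}$, originates. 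Your monotonicity idea does work, and is in fact cleaner: since $\bPi_k^{0,E}\boldsymbol{e}_h=\bPi_k^{0,E}\bu_I-\bPi_k^{0,E}\bu_h$, the quantity $c_{F,h}(\bu_h;\bu_h,\boldsymbol{e}_h)-c_{F,h}(\bu_I;\bu_I,\boldsymbol{e}_h)$ is nonpositive by \cite[Chapter I, Lemma 4.4]{dibenedetto2012degenerate} and can simply be discarded, leaving a pure $O(h^s)$ consistency piece. But then monotonicity does not ``furnish the $C_{\mathrm{for}}$ contribution'' as you write; you should commit to one of the two treatments rather than conflate them.

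Two small repairs are also needed. First, the discrete inf-sup condition \eqref{eq:infsupdiscreta} holds only for $\textsf{q}_h\in\Q_h$, so it must be applied to $\textsf{e}_h=\textsf{p}_I-\textsf{p}_h\in\Q_h$ rather than to $\textsf{p}-\textsf{p}_h$; the stated pressure estimate then follows by the triangle inequality together with $\|\textsf{p}-\textsf{p}_I\|_{0,\O}\lesssim h^s|\textsf{p}|_{s,\O}$. Second, the convective consistency terms (the differences $c_{N,h}^{\textit{skew}}-c_N^{\textit{skew}}$ and $\mathfrak{c}_h^{\textit{skew}}-\mathfrak{c}^{\textit{skew}}$ evaluated at the exact solution) are not covered by \eqref{eq:Bramblehilbert} alone; they require the projection-based arguments of \cite[Lemma 4.3]{MR3796371}, as the paper does, so these should be cited explicitly in your Steps 2 and 3. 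Neither issue breaks the strategy: with these adjustments your plan yields the stated rates.
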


\begin{proof}
We follow the proof of \cite[Proposition 5.3]{AVV} and proceed in several steps.

\emph{Step 1.} \emph{Interpolation error estimates.}
We introduce $\bu_I \in \bV_h$ and $T_I \in \bVV_h$ as the interpolants of $\bu \in \bV$ and $T \in \bVV$ given by Lemmas \ref{eq:interpolantvel} and \ref{eq:interpolanttemp}, respectively. We also introduce $ \textsf{p}_I \in \Q_h$ as follows: for each $E \in \CT_h$, $ \textsf{p}_I|_{E} := \Pi^{0,E}_{k-1} \textsf{p}$. As a direct consequence of Lemmas \ref{eq:interpolantvel} and \ref{eq:interpolanttemp} and the Bramble-Hilbert bound \eqref{eq:Bramblehilbert}, we obtain
\begin{equation}\label{eq:int1}
|\bu-\bu_I|_{1,\O} \lesssim h^{s}|\bu|_{s+1,\O}, \quad |T-T_I|_{1,\O} \lesssim h^{s}|T|_{s+1,\O}, \quad \| \textsf{p}- \textsf{p}_I\|_{0,\O} \lesssim h^{s}| \textsf{p}|_{s,\O}.
\end{equation}

Define $\boldsymbol{e}_h := \bu_I-\bu_h$, $E_h := T_I-T_h$, and $\textsf{e}_h :=  \textsf{p}_I- \textsf{p}_h$. Since $\bu \in \bZ$, we have that $\bu_I \in \bZ_h$ and thus that $\boldsymbol{e}_h \in \bZ_h$. In the following, we bound $\boldsymbol{e}_h$, $E_h$, and $\textsf{e}_h$.

\emph{Step 2. An estimate for $E_h$.} We start with the coercivity bound in \eqref{eq:ahtemp_bound}, the definition of $E_h$, namely $E_h := T_I-T_h$, and add and subtract $\mathfrak{a}(T;T,E_h)$ to obtain
\begin{equation}
 \begin{aligned}
\label{eqtemp1}
\beta_{*}\kappa_{*}|E_h|_{1,\O}^{2} & \leq \mathfrak{a}_h(T_h;E_h,E_h) = \mathfrak{a}_h(T_h;T_I,E_h) - \mathfrak{a}_h(T_h;T_h,E_h)
\\
& = \mathfrak{a}_h(T_h;T_I,E_h) - \mathfrak{a}(T;T,E_h) + \mathfrak{a}(T;T,E_h) - \mathfrak{a}_h(T_h;T_h,E_h).
\end{aligned}
\end{equation}
We now use the third equations of the continuous and discrete systems, \eqref{def:weak_BFH} and \eqref{def:weak_BFH_discrete}, respectively, to arrive at the following estimate:
\begin{multline}
 \beta_{*}\kappa_{*}|E_h|_{1,\O}^{2} \leq \left[\mathfrak{a}_h(T_h;T_I,E_h) - \mathfrak{a}(T;T,E_h)\right]
\\
+
\left[ \mathfrak{c}_h^{\textit{skew}}(\bu_h;T_h,E_h) - \mathfrak{c}(\bu;T,E_h)\right]
+ ( g-g_h,E_h)_{0,\Omega} =: \I + \I\I + \I\I\I.
\label{eq:I+II+III}
\end{multline}

\emph{Step 2.1.} We estimate $\I$. To do so, we first rewrite $\I$ using the definitions of $\mathfrak{a}$ and $\mathfrak{a}_h$, given in \eqref{eq:bilinear_form_fraka} and \eqref{eq:bilinear_form_fraka_h}, respectively, and a localization argument and obtain
\begin{multline*}
\I
=
\displaystyle
\sum_{E\in\CT_h} \bigg\{ \int_{E} \kappa(\Pi^{0,E}_k T_h)
(\bPi^{0,E}_{k-1} \nabla T_I - \nabla T)
\cdot
\bPi^{0,E}_{k-1}\nabla E_h
-  \int_{E}\kappa(T)\nabla T\cdot\nabla E_h
\\
+ \int_{E} \kappa(\Pi^{0,E}_{k}T_h)\nabla T \cdot \bPi^{0,E}_{k-1}\nabla E_h
+
\displaystyle
\kappa(\Pi^{0,E}_0 T_h) S_T^{E}((\I-\Pi^{0,E}_k)T_I,(\I-\Pi^{0,E}_k) E_h)\bigg\}.
\end{multline*}
We construct further differences as follows:
\begin{equation*}
\begin{aligned}
\I
& = \displaystyle
\sum_{E\in\CT_h}
\bigg\{
\int_{E} \kappa(\Pi^{0,E}_k T_h)(\bPi^{0,E}_{k-1} \nabla T_I - \nabla T)\cdot \bPi^{0,E}_{k-1}\nabla E_h
\\
& \quad + \displaystyle  \int_E
\left(
\kappa(\Pi^{0,E}_k T_h) - \kappa(T)
\right)\nabla T
\cdot \bPi^{0,E}_{k-1} \nabla E_h
-
\displaystyle  \int_E \kappa(T)\nabla T \cdot ( \nabla E_h - \bPi^{0,E}_{k-1} \nabla E_h)
\\
& \quad
+
\displaystyle \kappa(\Pi^{0,E}_0 T_h) S_T^{E}((\I-\Pi^{0,E}_k)T_I,(\I-\Pi^{0,E}_k) E_h)
\bigg\}
=: \sum_{E\in\CT_h} ( \I_1^E + \I_2^E - \I_3^E + \I_4^E).
\end{aligned}
\end{equation*}
Using the definition of $\bPi^{0,E}_{k-1}$, $\I_3^E$ can be rewritten as $\I_3^E = \int_E (\mathbf{I} - \bPi^{0,E}_{k-1})(\kappa(T)\nabla T) \cdot \nabla E_h$. As in \cite[page 3417]{AVV}, the terms $\I_1^E$, $\I_3^E$, and $\I_4^E$ can be controlled simultaneously under the assumption \textbf{A0}) and \eqref{eq:S_T}. In fact, we have
\begin{multline}
\label{eq:aux_temp_0}
 \sum_{E\in\CT_h}  (\I_1^E - \I_3^E + \I_4^E)
 \leq C
 \sum_{E\in\CT_h}
 \left( \kappa^{*}
 \| \bPi^{0,E}_{k-1} \nabla T_I - \nabla T \|_{0,E}
 \right.
 \\
 \left.
 +
 \| (\mathbf{I} - \bPi^{0,E}_{k-1})(\kappa(T)\nabla T) \|_{0,E}
 +
 \kappa^{*} \| \nabla (\I - \Pi^{0,E}_{k})T_I \|_{0,E}
 \right)
 \|  \nabla E_h \|_{0,E},
\end{multline}
where we used $\| \bPi^{0,E}_{k-1} \nabla E_h \|_{0,E} \leq \| \nabla E_h \|_{0,E}$ and $\| \nabla ( \I  - \Pi^{0,E}_{k}) E_h \|_{0,E} \lesssim \| \nabla E_h \|_{0,E}$. We now use the triangle inequality, Lemma \ref{eq:interpolanttemp}, and a basic error bound for the $\mathbf{L}^2(E)$-projection (see, e.g. \cite[Theorem 7]{CETS}) to obtain
\begin{equation}
 \| \bPi^{0,E}_{k-1} \nabla T_I - \nabla T \|_{0,E}
 \leq
 \| \nabla (T - T_I) \|_{0,E}
 +
 \| (\mathbf{I} - \bPi^{0,E}_{k-1}) \nabla T \|_{0,E}
 \lesssim h_E^s |T|_{1+s,D(E)}.
 \label{eq:aux_temp_1}
\end{equation}
An estimate for $\| \nabla (\I - \Pi^{0,E}_{k})T_I \|_{0,E}$ can be derived in view of similar arguments:
\begin{multline}
 \| \nabla (\I - \Pi^{0,E}_{k})T_I \|_{0,E}
 \leq
 \| \nabla (\I - \Pi^{0,E}_{k}) (T_I-T)  \|_{0,E}
 +
 \| \nabla (\I - \Pi^{0,E}_{k}) T \|_{0,E}
 \\
 \lesssim
 \| \nabla (T_I-T)  \|_{0,E}
 +
\| \nabla (\I - \Pi^{0,E}_{k}) T \|_{0,E}
\lesssim h_E^s |T|_{1+s,D(E)}.
\label{eq:aux_temp_2}
\end{multline}
Finally, we bound $\| (\mathbf{I} - \bPi^{0,E}_{k-1})(\kappa(T)\nabla T) \|_{0,E}$ as follows:
% \cite[Theorem 7]{CETS}:
\begin{equation}
\| (\mathbf{I} - \bPi^{0,E}_{k-1})(\kappa(T)\nabla T) \|_{0,E}
\lesssim
h_E^s | \kappa(T)\nabla T |_{s,E}
\lesssim
h_E^s \| \kappa(T) \|_{W_{\infty}^s(E)} | T |_{s+1,E}.
\label{eq:aux_temp_3}
\end{equation}
If we substitute the estimates \eqref{eq:aux_temp_1}, \eqref{eq:aux_temp_2}, and \eqref{eq:aux_temp_3} into \eqref{eq:aux_temp_0} we arrive at the bound
\begin{equation}
\label{eq:aux_temp_4}
 \sum_{E\in\CT_h}  (\I_1^E - \I_3^E + \I_4^E)
 \leq \mathfrak{A}(\kappa,T) h^s \|  \nabla E_h \|_{0,\Omega},
 \end{equation}
 where $\mathfrak{A} = \mathfrak{A}(\kappa,T)$ is a constant that depends on $\kappa$ and $T$.

 We now turn to the derivation of a bound for $\I_2^E$. For this purpose, we invoke H\"{o}lder's inequality ($1/p + 1/q +1/2 =1 $, where $q = 2 + \varepsilon >2$ is given by the uniqueness assumptions of Proposition \ref{pro:existence3}) and the stability of the $\L^2(E)$-projection in $\L^2(E)$ and $\L^p(E)$ to obtain
 \begin{multline}
\I_2^E
\leq
\kappa_{lip} \|T-\Pi^{0,E}_{k}T_h\|_{\L^p(E)}|T|_{\W_q^1(E)}\|\nabla E_h\|_{0,E}
\leq
\kappa_{lip} ( \|T- \Pi^{0,E}_{k}T \|_{\L^p(E)}
\\
+ \| \Pi^{0,E}_{k}(T - T_h) \|_{\L^p(E)})|T|_{\W_q^1(E)}\|\nabla E_h\|_{0,E}
\lesssim
\kappa_{lip}
( \|T- \Pi^{0,E}_{k}T \|_{\L^p(E)}
\\
+ \| T - T_h \|_{\L^p(E)})|T|_{\W_q^1(E)}\|\nabla E_h\|_{0,E}.
\label{eq:aux_temp_5}
\end{multline}
A standard error bound yields $\|T- \Pi^{0,E}_{k}T \|_{\L^p(E)} \lesssim h_E^s |T|_{\W_s^p(E)}$. On the other hand, we have $\| T - T_h \|_{\L^p(E)} \leq \| T - T_I \|_{\L^p(E)} + \| T_I - T_h \|_{\L^p(E)}$. Substituting these estimates into \eqref{eq:aux_temp_5}, we obtain
\[
\I_2^E \lesssim
\kappa_{lip}
(
h_E^s |T|_{\W_s^p(E)} + \| T - T_I \|_{\L^p(E)} + \| E_h \|_{\L^p(E)}
)|T|_{\W_q^1(E)}\|\nabla E_h\|_{0,E},
\]
where we also used that $E_h = T_I - T_h$. If we sum over all the elements in $E$ in $\CT_h$ and apply H\"{o}lder's inequality ($1/p + 1/q +1/2 =1$), we obtain
\begin{multline*}
\I_2 := \sum_{E\in\CT_h} \I_2^E \lesssim
\kappa_{lip}
(
h^s |T|_{\W_s^p(\Omega)} +  h^s| T |_{s+1,\Omega}+ \| E_h \|_{\L^p(\Omega)}
)|T|_{\W_q^1(\Omega)}\|\nabla E_h\|_{0,\Omega}
\end{multline*}
where we have used that $\| T - T_I \|_{\L^p(\Omega)} \lesssim \| \nabla(T - T_I) \|_{0,\Omega} \lesssim h^s  | T |_{s+1,\Omega}$, which follows from a basic Sobolev embedding and the error bound in Lemma \ref{eq:interpolanttemp}. As a result,
\begin{equation}
\I_2 \leq \frac{C_{\mathrm{lip}}(T)}{C_{\mathrm{sol}}(T)}\|\nabla E_h\|^2_{0,\Omega} + \mathfrak{A}(\kappa,T)h^s\|\nabla E_h\|_{0,\Omega}.
\label{eq:aux_temp_6}
\end{equation}
$C_{\mathrm{sol}}(T)$ comes from Prop. \ref{pro:existence3}: $|T|_{\W_q^1(\Omega)} < C_{\mathrm{sol}}(T)^{-1} := (\kappa_{lip} \mathcal{C}_q/\kappa_{*})^{-1}$,
% i.e., $C_{\mathrm{sol}}(T) = \kappa_{lip} \mathcal{C}_q/\kappa_{*}$.
and $C_{\mathrm{lip}}(T)>0$ is a suitable constant. In a final step, we combine \eqref{eq:aux_temp_4} with \eqref{eq:aux_temp_6} and obtain
\begin{equation}
\label{eq:IT}
\I
=
\sum_{E\in\CT_h}  (\I_1^E + \I_2^E - \I_3^E + \I_4^E) \leq
\frac{C_{\mathrm{lip}}(T)}{C_{\mathrm{sol}}(T)}\|\nabla E_h\|^2_{0,\Omega} + \mathfrak{A}(\kappa,T)h^s\|\nabla E_h\|_{0,\Omega}.
\end{equation}

\emph{Step 2.2.} Let us now control $\I\I$ in \eqref{eq:I+II+III}. As a first step, we note that according to Remark \ref{rem:skew-symmetry}, $\mathfrak{c}(\bu,T;E_h) = \mathfrak{c}^{skew}(\bu,T;E_h)$ because $\bu \in \bZ$. We use this property, the fact that $\mathfrak{c}_{h}^{\textit{skew}}(\bu_h;T_h,E_h) = \mathfrak{c}_{h}^{\textit{skew}}(\bu_h;T_h -T_I,E_h) + \mathfrak{c}_{h}^{\textit{skew}}(\bu_h;T_I,E_h) = \mathfrak{c}_{h}^{\textit{skew}}(\bu_h;T_I,E_h)$, and add subtract $\mathfrak{c}_{h}^{\textit{skew}}(\bu;T,E_h)$ to rewrite $\I\I$ as follows:
\begin{equation*}
 \I \I
 = \mathfrak{c}_{h}^{\textit{skew}}(\bu_h;T_I - T,E_h)
 +
 \mathfrak{c}_{h}^{\textit{skew}}(\bu_h - \bu;T,E_h)
 +
 \mathfrak{c}_{h}^{\textit{skew}}(\bu;T,E_h)
 -
 \mathfrak{c}^{\textit{skew}}(\bu;T,E_h).
\end{equation*}
Define $\I\I_1:= \mathfrak{c}_{h}^{\textit{skew}}(\bu_h;T_I - T,E_h)
 +
 \mathfrak{c}_{h}^{\textit{skew}}(\bu_h - \bu;T,E_h)$. To bound $\I\I_1$, we first use the estimate \eqref{eq:chtemp_bound} and obtain
\begin{equation*}
\textrm{II}_1
\leq \widehat{\mathfrak{C}}
\left(
|\bu_h|_{1,\O}|T-T_I|_{1,\O} + |\bu-\bu_h|_{1,\O}|T|_{1,\O}
\right)
|E_h|_{1,\O}.
\end{equation*}
We now use the interpolation error bounds of Lemmas \ref{eq:interpolantvel} and \ref{eq:interpolanttemp} to arrive at
\begin{equation}
\label{eq:II_1}
\textrm{II}_1
\leq
\frac{\widehat{\mathfrak{C}} C_{\mathrm{est}}}{C_{\mathrm{data}}}
|\boldsymbol{e}_h|_{1,\O}|E_h|_{1,\O}
+
\mathfrak{B}(\boldsymbol{u},T)h^s|E_h|_{1,\O},
\end{equation}
where we also used \eqref{eq:stability_bounds_continuous_discrete}. Define $\I\I_2^E:= \mathfrak{c}_h^{\textit{skew},E}(\bu;T,E_h)-\mathfrak{c}^{\textit{skew},E}(\bu;T,E_h)$, for $E \in \CT_h$, and $\I\I_2:= \mathfrak{c}_{h}^{\textit{skew}}(\bu;T,E_h)
 - \mathfrak{c}^{\textit{skew}}(\bu;T,E_h)$. We note that
\begin{multline}
\I\I_2^{E}
=
\frac{1}{2} \left[
\int_E ( \bPi^{0,E}_k \bu \cdot \bPi^{0,E}_{k-1}\nabla T ) \Pi^{0,E}_k E_h
-
\int_E  ( \bu \cdot \nabla T ) E_h
\right]
\\
-
\frac{1}{2}
\left[
\int_E ( \bPi^{0,E}_k \bu \cdot \bPi^{0,E}_{k-1} \nabla E_h ) \Pi^{0,E}_k T
-
\int_E  ( \bu \cdot \nabla E_h ) T
\right]
=:
\frac{1}{2} \I\I_{2,a}^{E} - \frac{1}{2} \I\I_{2,b}^{E}.
\end{multline}
The control of $\I\I_{2,a}^{E}$ and $\I\I_{2,b}^{E}$ follow from the arguments given in the proof of \cite[Lemma 4.3]{MR3796371}. If we sum the obtained bounds over all elements in $E$ in $\CT_h$ and apply a suitable H\"{o}lder's inequality, we arrive at
\begin{equation}
\textrm{II}_2 \leq \mathfrak{B}(\bu,T)h^s|E_h|_{1,\O}.
\label{eq:II_2}
\end{equation}
A collection of the bounds \eqref{eq:II_1} and \eqref{eq:II_2} shows that
\begin{equation}
% \label{T2}
\label{eq:IIT}
\textrm{II}
\leq
\frac{\widehat{\mathfrak{C}} C_{\mathrm{est}}}{C_{\mathrm{data}}}
|\boldsymbol{e}_h|_{1,\O}|E_h|_{1,\O}
+
\mathfrak{B}(\boldsymbol{u},T)h^s|E_h|_{1,\O}
\end{equation}

\emph{Step 2.3.} We have now arrived at the estimation of term $\I\I\I$. The bound for this term follows from the arguments in \cite[Section 4.7]{MR2997471}:
% \cite[Theorem 7]{CETS}:
\begin{equation}\label{eq:IIIT}
\I\I\I
% \|g-g_h\|_{0,\O}\| E_h\|_{0,\O}
\lesssim
% \textsf{A}(g)
\mathfrak{G}(g) h^{s+2}|E_h|_{1,\O},
% \quad \textsf{A}(g)=|g|_{1+s,\O}.
\end{equation}

\emph{Step 2.4.} Finally, if we substitute the bounds for $\I$, $\I\I$, and $\I\I\I$ obtained in \eqref{eq:IT}, \eqref{eq:IIT} and \eqref{eq:IIIT}, respectively, into \eqref{eq:I+II+III} we obtain
\begin{equation}\label{eq:semifinal_estimate_T}
|E_h|_{1,\O} \leq
C_{\mathrm{final},T}
|\boldsymbol{e}_h|_{1,\O}
+
\mathfrak{C}(\kappa,\bu,T)h^s
+
\mathfrak{G}(g) h^{s+2}.
\end{equation}

\emph{Step 3. An estimate for $\boldsymbol{e}_h$.} We start with the coercivity bound in \eqref{eq:ah_bound} for $a_h(\cdot;\cdot,\cdot)$, the definition of $\boldsymbol{e}_h$, namely $\boldsymbol{e}_h = \boldsymbol{u}_I - \boldsymbol{u}_h$, and add and subtract the term $a(T;\bu,\boldsymbol{e}_h)$ to obtain
\begin{equation}
\begin{aligned}
\alpha_*\nu_*|\boldsymbol{e}_h|_{1,\O}^2
&
\leq a_h(T_h;\boldsymbol{e}_h,\boldsymbol{e}_h) = a_h(T_h;\bu_I,\boldsymbol{e}_h)-a_h(T_h;\bu_h,\boldsymbol{e}_h)
\\
&
=
a_h(T_h;\bu_I,\boldsymbol{e}_h)
-
a(T;\bu,\boldsymbol{e}_h)
+
a(T;\bu,\boldsymbol{e}_h)
-
a_h(T_h;\bu_h,\boldsymbol{e}_h).
\end{aligned}
\end{equation}
We now use the first equations of the continuous and discrete systems, \eqref{def:weak_BFH} and \eqref{def:weak_BFH_discrete}, respectively, to obtain
\begin{multline*}
\alpha_*\nu_*|\boldsymbol{e}_h|_{1,\O}^2
\leq
[
a_h(T_h;\bu_I,\boldsymbol{e}_h)
-
a(T;\bu,\boldsymbol{e}_h)
]
+
[
c_{N,h}^{\textit{skew}}(\bu_h;\bu_h,\boldsymbol{e}_h)
-
c_N(\bu;\bu,\boldsymbol{e}_h)
]
\\
+
[
c_{F,h}(\bu_h;\bu_h,\boldsymbol{e}_h)
-
c_{F}(\bu;\bu,\boldsymbol{e}_h)
]
+
[d_h(\bu_h,\boldsymbol{e}_h)-d(\bu,\boldsymbol{e}_h)]
+
(\boldsymbol{f}-\boldsymbol{f}_h,\boldsymbol{e}_h)_{0,\Omega},
\end{multline*}
where we have also used that $\boldsymbol{e}_h = \boldsymbol{u}_I - \boldsymbol{u}_h \in \bZ_h \subseteq \mathbf{Z}$; see Remark \ref{rem:divergence_free}.

\emph{Step 3.1.} Define $\mathfrak{I}_1 := [
a_h(T_h;\bu_I,\boldsymbol{e}_h)
-
a(T;\bu,\boldsymbol{e}_h)
]$. The control of $\mathfrak{I}_1$ follows from \cite[estimate (5.16)]{AVV}:
\begin{equation}
\label{eq:mathfrakI_1}
\mathfrak{I}_1
\leq
\frac{C_{\mathrm{lip}}(\bu)}{C_{\mathrm{sol}}(\bu)} |E_h|_{1,\O}|\boldsymbol{e}_h|_{1,\O}
+
\mathfrak{D}(\nu,\bu,T)h^s|\boldsymbol{e}_h|_{1,\O}.
\end{equation}
Here, the constant $C_{\mathrm{sol}}(\bu)$ comes from Prop. \ref{pro:existence3}: $C_{\mathrm{sol}}(\bu)|\boldsymbol{u}|_{\mathbf{W}_q^1(\Omega)} < 1$, and $C_{\mathrm{lip}}(\bu)$ denotes a suitable positive constant.

\emph{Step 3.2.} Define $\mathfrak{I}_2:= c_{N,h}^{\textit{skew}}(\bu_h;\bu_h,\boldsymbol{e}_h) - c_N(\bu;\bu,\boldsymbol{e}_h)$.
Note that $c^{skew}_N(\bu;\bu,\boldsymbol{e}_h) = c_N(\bu;\bu,\boldsymbol{e}_h)$ because $\bu \in \bZ$; see Remark \ref{rem:skew-symmetry}. So we rewrite the term $\mathfrak{I}_2$ as follows:
\begin{equation*}
 \mathfrak{I}_2 =
 [c_{N,h}^{\textit{skew}}(\bu_h;\bu_h,\boldsymbol{e}_h)
 -
 c_{N,h}^{\textit{skew}}(\bu;\bu,\boldsymbol{e}_h)]
 +
 [c_{N,h}^{\textit{skew}}(\bu;\bu,\boldsymbol{e}_h)
 -
 c^{skew}_N(\bu;\bu,\boldsymbol{e}_h)].
\end{equation*}
Since $\bu \in \bV \cap \mathbf{H}^{s+1}(\Omega)$ and $\boldsymbol{e}_h \in \bV$, we can apply  \cite[Lemma 4.3]{MR3796371} to obtain
\[
  \mathfrak{I}_{2,b}
 :=
 c_{N,h}^{\textit{skew}}(\bu;\bu,\boldsymbol{e}_h)
 -
 c^{skew}_N(\bu;\bu,\boldsymbol{e}_h) \lesssim \mathfrak{E}(\bu)h^s|\boldsymbol{e}_h|_{1,\O}.
\]
Define $\mathfrak{I}_{2,a}:= c_{N,h}^{\textit{skew}}(\bu_h;\bu_h,\boldsymbol{e}_h)
 -
 c_{N,h}^{\textit{skew}}(\bu;\bu,\boldsymbol{e}_h)$. Apply \cite[Lemma 4.4]{MR3796371} to obtain
\begin{equation*}
 \mathfrak{I}_{2,a}
 \leq
 \frac{\widehat{C}_N C_{\mathrm{est}}}{C_{\mathrm{data}}}|\boldsymbol{e}_h|^2_{1,\O}
 +
 \widehat{C}_N
 \left(
 \frac{C_{\mathrm{est}}}{C_{\mathrm{data}}}
 +
 \frac{\widehat{C}_{\mathrm{est}}}{\widehat{C}_{\mathrm{data}}}
 \right)
 \mathfrak{E}(\bu)h^s|\boldsymbol{e}_h|_{1,\O}.
\end{equation*}
If we combine the previously derived bounds, we arrive at
\begin{equation}
\mathfrak{I}_2
\leq
\frac{\widehat{C}_N C_{\mathrm{est}}}{C_{\mathrm{data}}}
|\boldsymbol{e}_h|^2_{1,\O}
 +
 \mathfrak{E}(\bu)h^s|\boldsymbol{e}_h|_{1,\O}.
\label{eq:mathfrakI_2}
\end{equation}

\emph{Step 3.3.} Define $\mathfrak{I}_3:= c_{F,h}(\bu_h;\bu_h,\boldsymbol{e}_h)
-
c_{F}(\bu;\bu,\boldsymbol{e}_h)$. We add and subtract $c_{F,h}(\bu;\bu,\boldsymbol{e}_h)$ to obtain $\mathfrak{I}_3 = [c_{F,h}(\bu_h;\bu_h,\boldsymbol{e}_h)- c_{F,h}(\bu;\bu,\boldsymbol{e}_h)] + [c_{F,h}(\bu;\bu,\boldsymbol{e}_h) - c_{F}(\bu;\bu,\boldsymbol{e}_h)] =: \mathfrak{I}_{3,a} + \mathfrak{I}_{3,b}$. We bound $\mathfrak{I}_{3,a}$ with the help of \cite[Lemma 5.3]{NMTMA-17-210}:
\begin{equation}
\label{eq:I_3a}
\mathfrak{I}_{3,a}
\leq
C_{\textrm{for}}\left(\mathfrak{E}(\bu)h^s|\boldsymbol{e}_h|_{1,\O} + |\boldsymbol{e}_h|^2_{1,\O} \right),
\end{equation}
where $C_{\textrm{for}} = C [ (C_{\textrm{est}}/{C_{\textrm{data}}})^{r-2} +  (\widehat{C}_{\textrm{est}}/{\widehat{C}_{\textrm{data}}})^{r-2}]$ and $C$ is the constant in \cite[Lemma 5.3]{NMTMA-17-210}. Since $\boldsymbol{u} \in \bV \cap \mathbf{H}^{s+1}(\Omega)$, a bound for $\mathfrak{I}_{3,b}$ follows from \cite[Lemma 5.2]{NMTMA-17-210}:
\begin{equation}
\label{eq:I_3b}
\mathfrak{I}_{3,b}
\leq
Ch^s (|\boldsymbol{u}|_{s+1,\Omega} + |\boldsymbol{u}|_{s,\Omega}) |\boldsymbol{u}|^{r-2}_{1,\O}|\boldsymbol{e}_h|_{1,\O}
\leq
\mathfrak{E}(\bu)h^s|\boldsymbol{e}_h|_{1,\O}.
\end{equation}
If we combine the bounds \eqref{eq:I_3a} and \eqref{eq:I_3b}, we obtain
\begin{equation}
\label{eq:mathfrakI_3}
\mathfrak{I}_{3} \leq C_{\textrm{for}} |\boldsymbol{e}_h|^2_{1,\O} + \mathfrak{E}(\bu)h^s|\boldsymbol{e}_h|_{1,\O}.
\end{equation}

\emph{Step 3.4.} Define $\mathfrak{I}_4 := d_h(\bu_h,\boldsymbol{e}_h)-d(\bu,\boldsymbol{e}_h)$. The control of $\mathfrak{I}_4$ is standard. To derive an estimate, we first analyze the local term $\mathfrak{I}_4^E := d_h^E(\bu_h,\boldsymbol{e}_h)-d^E(\bu,\boldsymbol{e}_h)$. In view of $(\bPi^{0,E}_k\bu_h, \bPi^{0,E}_k\boldsymbol{e}_h - \boldsymbol{e}_h)_{0,E} = 0$, we rewrite $\mathfrak{I}_4^E$ as follows:
\begin{multline}
\mathfrak{I}_4^E
 =
 (\bPi^{0,E}_k\bu_h - \bu, \bPi^{0,E}_k\boldsymbol{e}_h)_{0,E}
 +
 (\bu, \bPi^{0,E}_k\boldsymbol{e}_h - \boldsymbol{e}_h)_{0,E}
 \\
 =
 (\bPi^{0,E}_k\bu_h - \bu, \bPi^{0,E}_k\boldsymbol{e}_h)_{0,E}
 +
 (\bu - \bPi^{0,E}_k\bu_h, \bPi^{0,E}_k\boldsymbol{e}_h - \boldsymbol{e}_h)_{0,E}
 =
 (\bPi^{0,E}_k\bu_h - \bu,\boldsymbol{e}_h)_{0,E}.
 \label{eq:I_4E}
\end{multline}
As a result,
$
\mathfrak{I}_4^E
\leq
\|\bPi^{0,E}_k\bu_h - \bu \|_{0,E} \|\boldsymbol{e}_h \|_{0,E}.
$
We control $\|\bPi^{0,E}_k\bu_h - \bu \|_{0,E}$ as follows:
\begin{multline}
\|\bPi^{0,E}_k\bu_h - \bu \|_{0,E}
\leq
\|\bu_h - \bu \|_{0,E} + \|\bPi^{0,E}_k\bu - \bu \|_{0,E}
\\
\leq
\|\bu - \bu_I \|_{0,E} + \|\boldsymbol{e}_h \|_{0,E}
+
\|\bPi^{0,E}_k\bu - \bu \|_{0,E}.
\end{multline}
A basic bound for $\|\bPi^{0,E}_k\bu - \bu \|_{0,E}$ and an application of Lemma \ref{eq:interpolantvel} thus show that
\begin{equation*}
\mathfrak{I}_4^E
\
\leq
\|\boldsymbol{e}_h \|^2_{0,E}
+
Ch_E^{s+1} |\boldsymbol{u} |_{s+1,D(E)} \|\boldsymbol{e}_h \|_{0,E}.
\end{equation*}
If we sum over all the elements in $E$ in $\CT_h$ and apply H\"{o}lder's inequality for sums, we obtain an estimate for $ \mathfrak{I}_4 $:
\begin{equation}
\mathfrak{I}_4
\leq
\textsf{C}^2 |\boldsymbol{e}_h|_{1,\Omega}^2
 +
\mathfrak{E}(\bu)h^{s+1}|\boldsymbol{e}_h|_{1,\Omega},
\label{eq:mathfrakI_4}
\end{equation}
where we have also used the Poincar\'e inequality \eqref{eq:Poincare}.

\emph{Step 3.5.} Define $\mathfrak{I}_5 := (\boldsymbol{f}-\boldsymbol{f}_h,\boldsymbol{e}_h)_{0,\Omega}$. An estimate for $\mathfrak{I}_5$ is direct:
\begin{equation}
 \label{eq:mathfrakI_5}
\mathfrak{I}_5
\lesssim \mathfrak{F}(\boldsymbol{f}) h^{s+2} | \boldsymbol{e}_h |_{1,\Omega}.
\end{equation}

\emph{Step 3.6.} \emph{A final estimate for $|\boldsymbol{e}_h|_{1,\Omega}$.} Replace the estimates \eqref{eq:mathfrakI_1}, \eqref{eq:mathfrakI_2}, \eqref{eq:mathfrakI_3}, \eqref{eq:mathfrakI_4}, and \eqref{eq:mathfrakI_5} into the bounded derived for $|\boldsymbol{e}_h|_{1,\O}^2$ to obtain
\begin{multline*}
\left( \alpha_*\nu_* - \frac{\widehat{C}_N C_{\mathrm{est}}}{C_{\mathrm{data}}} - C_{\textrm{for}} - \textsf{C}^2 \right)
|\boldsymbol{e}_h|^2_{1,\O}
\leq
\frac{C_{\mathrm{lip}}(\bu)}{C_{\mathrm{sol}}(\bu)} |E_h|_{1,\O}|\boldsymbol{e}_h|_{1,\O}
\\
+
\mathfrak{D}(\nu,\bu,T) h^{s}|\boldsymbol{e}_h|_{1,\O} +
\mathfrak{F}(\boldsymbol{f})h^{s+2} |\boldsymbol{e}_h|_{1,\O},
\end{multline*}
We now replace the bound \eqref{eq:semifinal_estimate_T} for $|E_h|_{1,\O}$ into the previous estimate to obtain
\begin{equation}
|\boldsymbol{e}_h|_{1,\O} \leq C_{\mathrm{final},\bu} \left( \mathfrak{M}(\nu,\kappa,\bu,T) h^{s} + \mathfrak{N}(\boldsymbol{f},g)h^{s+2} \right).
 \label{eq:final_estimate_u}
\end{equation}

\emph{Step 4.} \emph{A final estimate for $|E_h|_{1,\Omega}$.} We replace \eqref{eq:final_estimate_u} into the bound \eqref{eq:semifinal_estimate_T} to finally arrive at
\begin{multline}
|E_h|_{1,\Omega} \leq C_{\mathrm{final},T}C_{\mathrm{final},u}\left( \mathfrak{M}(\nu,\kappa,\bu,T) h^{s} + \mathfrak{N}(\boldsymbol{f},g)h^{s+2} \right)
\\
+
\mathfrak{C}(\kappa,\bu,T)h^s
+
\mathfrak{G}(g) h^{s+2}
\leq  \mathfrak{M}(\nu,\kappa,\bu,T) h^{s} + \mathfrak{N}(\boldsymbol{f},g)h^{s+2}.
\label{eq:final_estimate_T}
\end{multline}

\emph{Step 5}. \emph{An estimate for $\textsf{e}_h$.} In this last step, we derive an error estimate for the pressure error $\|\textsf{e}_h\|_{L^2(\Omega)}$. To do so, we let $\bv_h \in \bV_h$ and use the definition of $\textsf{e}_h$, namely $\textsf{e}_h = \textsf{p}_I - \textsf{p}_h$ and the first equations of the continuous and discrete systems \eqref{def:weak_BFH} and \eqref{def:weak_BFH_discrete}, respectively, to obtain
\begin{multline*}
b(\bv_h,\textsf{e}_h)=b(\bv_h, \textsf{p}_I)-b(\bv_h, \textsf{p}_h)
=
b(\bv_h, \textsf{p}_I -\textsf{p})
+
b(\bv_h, \textsf{p})-b(\bv_h, \textsf{p}_h)
=
b(\bv_h, \textsf{p}_I- \textsf{p})
\\
+
\left[a_h(T_h;\bu_h,\bv_h)-a(T;\bu,\bv_h)\right]
+
\left[c_{N,h}^{\textit{skew}}(\bu_h;\bu_h,\bv_h)-c_{N}^{\textit{skew}}(\bu;\bu,\bv_h)\right]
\\
+
\left[c_{F,h}(\bu_h;\bu_h,\bv_h)-c_{F}(\bu;\bu,\bv_h)\right]
+
\left[d_h(\bu_h,\bv_h)-d(\bu,\bv_h)\right]
+
(\boldsymbol{f}-\boldsymbol{f}_h),\bv_h)_{0,\O}.
\end{multline*}

\emph{Step 5.1}. Define $\mathfrak{K}_1:=\left[a_h(T_h;\bu_h,\bv_h)-a(T;\bu,\bv_h)\right]$. The control of the term $\mathfrak{K}_1$ can be found in \cite[page 3419]{AVV}:
\begin{multline}
\mathfrak{K}_1 \lesssim |\bu-\bu_h|_{1,\O}|\bv_h|_{1,\O}
+
|T-T_h|_{1,\O}|\bv_h|_{1,\O}
+
\mathfrak{D}(\nu,\boldsymbol{u},T)h^s|\bv_h|_{1,\O}
\\
\leq
\mathfrak{M}(\nu,\kappa,\bu,T)h^{s}|\bv_h|_{1,\O}
+
\mathfrak{N}(\boldsymbol{f},g)h^{s+2}|\bv_h|_{1,\O},
\label{eq:K_1}
\end{multline}
where we have used \eqref{eq:final_estimate_u} and \eqref{eq:final_estimate_T} to obtain the last bound.

\emph{Step 5.2.} Define $\mathfrak{K}_2:= c_{N,h}^{\textit{skew}}(\bu_h;\bu_h,\bv_h)-c_{N}^{\textit{skew}}(\bu;\bu,\bv_h)$. An estimate for the term $\mathfrak{K}_2$ can be obtained as follows. First, we rewrite $\mathfrak{K}_2$ as
\begin{multline*}
\mathfrak{K}_2
=
[
c_{N,h}^{\textit{skew}}(\bu_h;\bu_h,\bv_h)
-
c_{N,h}^{\textit{skew}}(\bu;\bu,\bv_h)
]
+
[
c_{N,h}^{\textit{skew}}(\bu;\bu,\bv_h)
-
c_{N}^{\textit{skew}}(\bu;\bu,\bv_h)]
\\
=
[
c_{N,h}^{\textit{skew}}(\bu_h - \bu;\bu_h,\bv_h)
-
c_{N,h}^{\textit{skew}}(\bu;\bu - \bu_h,\bv_h)
]
+
[
c_{N,h}^{\textit{skew}}(\bu;\bu,\bv_h)
-
c_{N}^{\textit{skew}}(\bu;\bu,\bv_h)].
\end{multline*}
Define
$\mathfrak{K}_{2,a}:=
c_{N,h}^{\textit{skew}}(\bu_h - \bu;\bu_h,\bv_h)
-
c_{N,h}^{\textit{skew}}(\bu;\bu - \bu_h,\bv_h)
$. A bound for $\mathfrak{K}_{2,a}$ follows from the use of the bound \eqref{eq:ch_bound} and the error estimate \eqref{eq:final_estimate_u}. In fact, we have
\begin{equation}
\begin{aligned}
\mathfrak{K}_{2,a}
& \leq
\widehat{C}_N( |\bu_h|_{1,\O} + |\bu|_{1,\O} )
|\bu - \bu_h|_{1,\O}
|\bv_h|_{1,\O}
\\
& \leq
\mathfrak{M}(\nu,\kappa,\bu,T)h^{s}|\bv_h|_{1,\O}
+
\mathfrak{N}(\boldsymbol{f},g)h^{s+2}|\bv_h|_{1,\O}.
\end{aligned}
\label{eq:K_2a}
\end{equation}
It thus suffices to bound $\mathfrak{K}_{2,b}:= c_{N,h}^{\textit{skew}}(\bu;\bu,\bv_h) - c_{N}^{\textit{skew}}(\bu;\bu,\bv_h)$. Since $\bu \in \mathbf{H}^{s+1}(\Omega) \cap \bV$, we can use the bound in \cite[Lemma 4.3]{MR3796371} and obtain
$
\mathfrak{K}_{2,b}
\leq \mathfrak{E}(\bu)h^s|\bv_h|_{1,\O}.
$
This bound and \eqref{eq:K_2a} yield the control of $\mathfrak{K}_{2}:$
\begin{equation}
 \mathfrak{K}_{2} \leq \mathfrak{M}(\nu,\kappa,\bu,T)h^{s}|\bv_h|_{1,\O}
+
\mathfrak{N}(\boldsymbol{f},g)h^{s+2}|\bv_h|_{1,\O}.
\label{eq:K_2}
\end{equation}

\emph{Step 5.3.} Define $\mathfrak{K}_3:= c_{F,h}(\bu_h;\bu_h,\bv_h)-c_{F}(\bu;\bu,\bv_h)$. We add and subtract the term $c_{F,h}(\bu;\bu,\bv_h)$ and rewrite $\mathfrak{K}_3$ as follows:
\begin{equation*}
\mathfrak{K}_3 = [c_{F,h}(\bu_h;\bu_h,\bv_h)-c_{F,h}(\bu;\bu,\bv_h)]
+
[c_{F,h}(\bu;\bu,\bv_h)-c_{F}(\bu;\bu,\bv_h)].
\end{equation*}
Define $\mathfrak{K}_{3,a} = c_{F,h}(\bu_h;\bu_h,\bv_h)-c_{F,h}(\bu;\bu,\bv_h)$. A direct application of the bound in \cite[Lemma 5.3]{NMTMA-17-210} shows that
\begin{multline*}
\mathfrak{K}_{3,a}
\leq
C(|\bu_h|_{1,\O}^{r-2} + |\bu|_{1,\O}^{r-2})|\bu-\bu_h|_{1,\O}|\bv_h|_{1,\O}
\leq
C_{\mathrm{for}}|\bu-\bu_h|_{1,\O}|\bv_h|_{1,\O}
\\
\leq
\mathfrak{M}(\nu,\kappa,\bu,T)h^{s}|\bv_h|_{1,\O}
+
\mathfrak{N}(\boldsymbol{f},g)h^{s+2}|\bv_h|_{1,\O}.
\end{multline*}
Define $\mathfrak{K}_{3,b} :=c_{F,h}(\bu;\bu,\bv_h)-c_{F}(\bu;\bu,\bv_h)$. Since $\bu \in \bV \cap \mathbf{H}^{s+1}(\Omega)$, we can directly apply the bound from \cite[Lemma 5.2]{NMTMA-17-210} to obtain
\begin{equation*}
\mathfrak{K}_{3,b} \leq C h^{s}(|\bu|_{s,\O}+|\bu|_{1+s,\O})|\bu|_{1,\O}^{r-2}|\bv_h|_{1,\O}
\leq
\mathfrak{E}(\bu) h^{s} |\bv_h|_{1,\O}.
\end{equation*}
The two estimates derived above allow us to control the term $\mathfrak{K}_3$:
\begin{equation}
\mathfrak{K}_3
\leq
\mathfrak{M}(\nu,\kappa,\bu,T)h^{s}|\bv_h|_{1,\O}
+
\mathfrak{N}(\boldsymbol{f},g)h^{s+2}|\bv_h|_{1,\O}.
\label{eq:K_3}
\end{equation}

\emph{Step 5.4.} Define $\mathfrak{K}_4 := d_h(\bu_h,\bv_h)-d(\bu,\bv_h)$ and $\mathfrak{K}_4^E := d_h^E(\bu_h,\bv_h)-d^E(\bu,\bv_h)$ for $E \in \CT_h$. Following the arguments that lead to \eqref{eq:I_4E}, we deduce that
\begin{equation*}
\begin{aligned}
\mathfrak{K}_4^E =
(\bPi^{0,E}_k\bu_h - \bu,\boldsymbol{v}_h)_{0,E}
&
\leq
\textsf{C}
\| \bPi^{0,E}_k\bu_h - \bu \|_{0,E} |\boldsymbol{v}_h|_{1,E}
\\
&
\leq
\textsf{C}
(
\| \bu_h - \bu \|_{0,E}
+
\| \bPi^{0,E}_k \bu - \bu \|_{0,E}
)
|\boldsymbol{v}_h|_{1,E}.
\end{aligned}
\end{equation*}
If we sum over all the elements $E \in \CT_h$ and apply H\"{o}lder's inequality, we obtain
\begin{equation}
\begin{aligned}
\mathfrak{K}_4
& \lesssim
 (
| \bu_h - \bu |_{1,\Omega}
+
\| \bPi^{0,E}_k \bu - \bu \|_{0,\Omega}
)
|\boldsymbol{v}_h|_{1,\Omega}
\\
& \leq
\mathfrak{M}(\nu,\kappa,\bu,T)h^{s}|\bv_h|_{1,\O}
+
\mathfrak{N}(\boldsymbol{f},g)h^{s+2}|\bv_h|_{1,\O}.
\end{aligned}
\label{eq:K_4}
\end{equation}

\emph{Step 5.5.} Define $\mathfrak{K}_5 := (\boldsymbol{f}-\boldsymbol{f}_h,\bv_h)_{0,\O}$ and $\mathfrak{K}_6 := b(\bv_h, \textsf{p}_I - \textsf{p})$. The term $\mathfrak{K}_5$ was already estimated in \eqref{eq:mathfrakI_5}. A bound for the term $\mathfrak{K}_6$ follows from the definition of $\textsf{p}_I$ and the Bramble--Hilbert bound \eqref{eq:Bramblehilbert}: $\mathfrak{K}_6 \lesssim \mathfrak{P}(\textsf{p}) h^s |\bv_h|_{1,\O}$.

\emph{Step 5.6.} In view of the discrete inf-sup condition \eqref{eq:infsupdiscreta}, the bounds \eqref{eq:K_1}, \eqref{eq:K_2}, \eqref{eq:K_3}, and \eqref{eq:K_4}, we conclude
\begin{equation}
\| \textsf{p} - \textsf{p}_h\|_{0,\O} \leq \mathfrak{O}(\nu,\kappa,\bu,T,\textsf{p})h^{s}
+
\mathfrak{N}(\boldsymbol{f},g)h^{s+2}.
\end{equation}
This concludes the proof.
\end{proof}

\section{Numerical experiments}
\label{sec:numericos}

In this section, we report some numerical experiments to evaluate the performance of the proposed VEM. Our goal is to compute the experimental convergence rates in the norms used in the theoretical analysis. The results of this section were obtained using a MATLAB code with $k=2$, where the nonlinear problem \eqref{def:weak_BFH_discrete} was solved using the fixed-point iteration described in \textbf{Algorithm 1}. The refinement parameter $N$ used to characterize each mesh is the number of elements on each edge of $\Omega$. In Figure \ref{FIG:meshes} we present examples of the meshes we will use for our tests.

\subsection{Fixed-point iteration}
Let us describe the fixed-point iteration used to solve the coupled problem \eqref{def:weak_BFH_discrete}. \\

\noindent \textbf{Algorithm 1: Fixed-point iteration}

\noindent\rule{13cm}{0.4pt} \\
\noindent Input: Initial mesh $\CT_h$, initial guess $(\bu_h^0,\textsf{p}_h^0,T_h^0)\in \bV_h \times\Q_h\times\bVV_h$, the coefficients $\nu(\cdot)$ and $\kappa(\cdot)$, $\boldsymbol{f}_h \in [\L^2(\O)]^2$, $g_h\in\L^{2}(\O)$, $r\in[3,4]$, and $\textsf{tol}=10^{-6}$.
\begin{enumerate}
\item[1:] For $n\geq0$, find $(\bu_h^{n+1},\textsf{p}_{h}^{n+1})\in\bV_h\times\Q_h$ such that
\begin{equation*}
\begin{split}
a_h(T_h^n;\bu_h^{n+1},\bv_h) + c_{N,h}^{\textit{skew}}(\bu_h^n;\bu_h^{n+1},\bv_h)
+
c_{F,h}(\bu_h^n;\bu_h^{n+1},\bv_h) \\
+
d_h(\bu_h^{n+1},\bv_h)
+
b(\bv_h,\textsf{p}_h^{n+1})&= (\boldsymbol{f}_h,\bv_h)_{0,\O},
\\
b(\bu_h^{n+1},\textsf{q}_h) &= 0,
\end{split}
\end{equation*}
for every $(\bv_h,\textsf{q}_h)\in\bV_h\times\Q_h$. Then, $T_h^{n+1} \in \bVV_h$ is found as the solution of
\begin{equation*}
\mathfrak{a}_h(T_h^{n};T_h^{n+1},S_h)+\mathfrak{c}_h^{\textit{skew}}(\bu_h^{n+1};T_h^{n+1},S_h)=(g_h,S_h)_{0,\O},
\end{equation*}
for every $S_h\in\bVV_h$.
\item[2:] If $|(\bu_h^{n+1},\textsf{p}_h^{n+1},T_h^{n+1})-(\bu_h^{n},\textsf{p}_h^n,T_h^n)|>\textsf{tol}$, set $n\leftarrow n+1$ and go to step 1. Otherwise, return $(\bu_h,\textsf{p}_h,T_h)=(\bu_h^{n+1},\textsf{p}_h^{n+1},T_h^{n+1})$. Here, $|\cdot|$ denotes the Euclidean norm.
\end{enumerate}
\noindent\rule{13cm}{0.4pt}

\begin{figure}[H]
	\begin{center}
		\begin{minipage}{13cm}
			\centering\includegraphics[height=3.7cm, width=3.7cm]{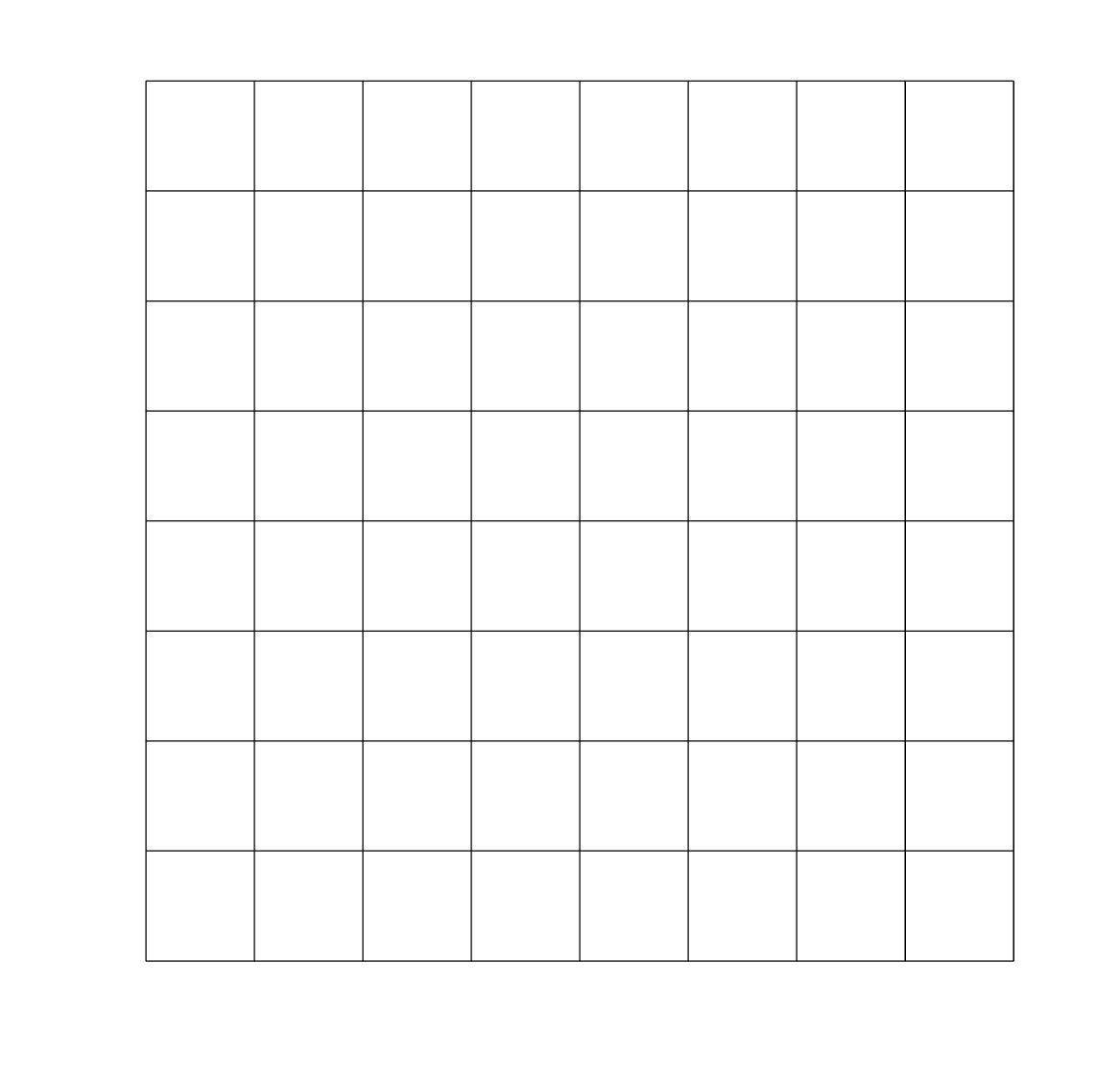}
                          \centering\includegraphics[height=3.7cm, width=3.7cm]{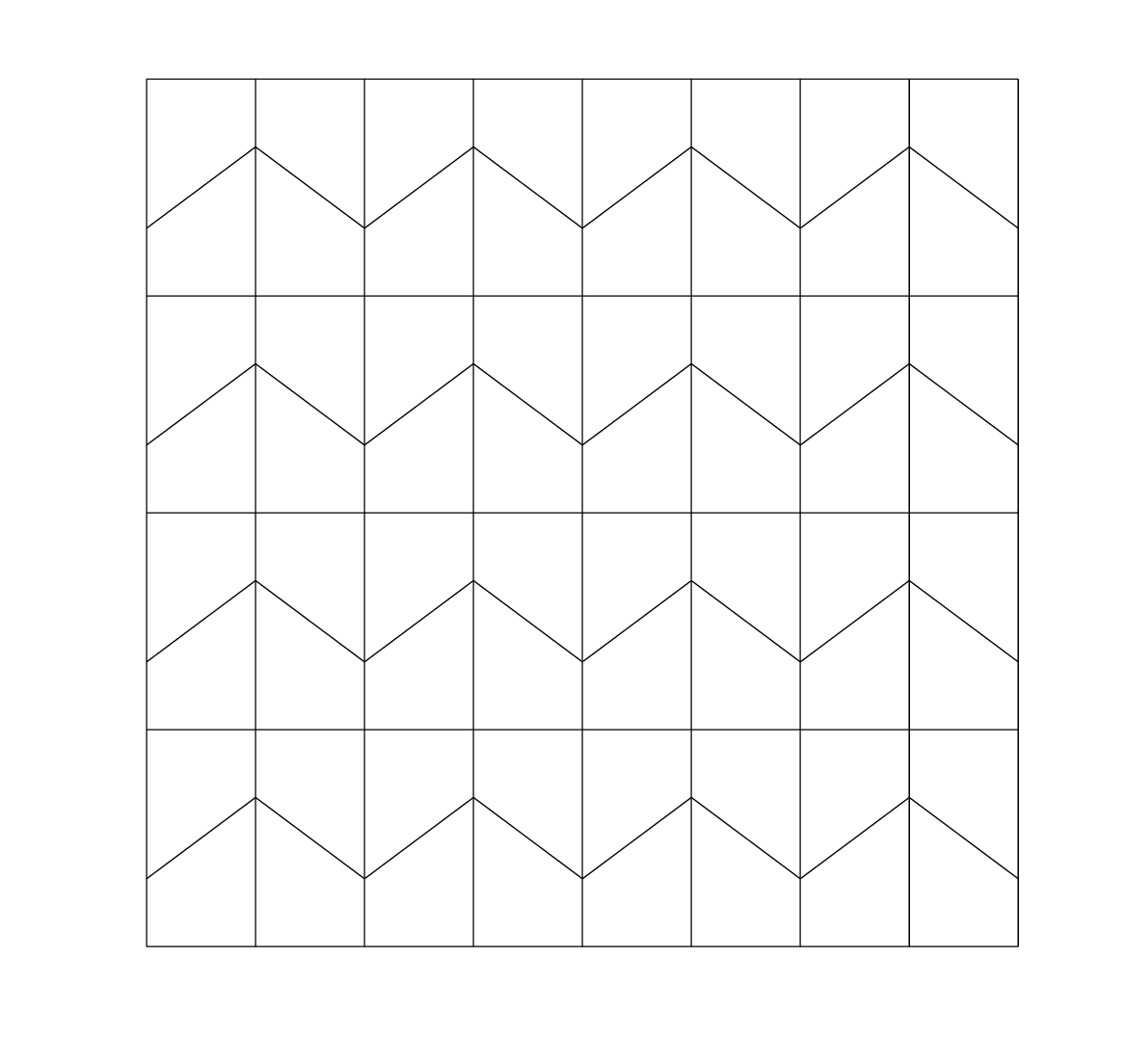}
                         \centering\includegraphics[height=3.7cm, width=3.7cm]{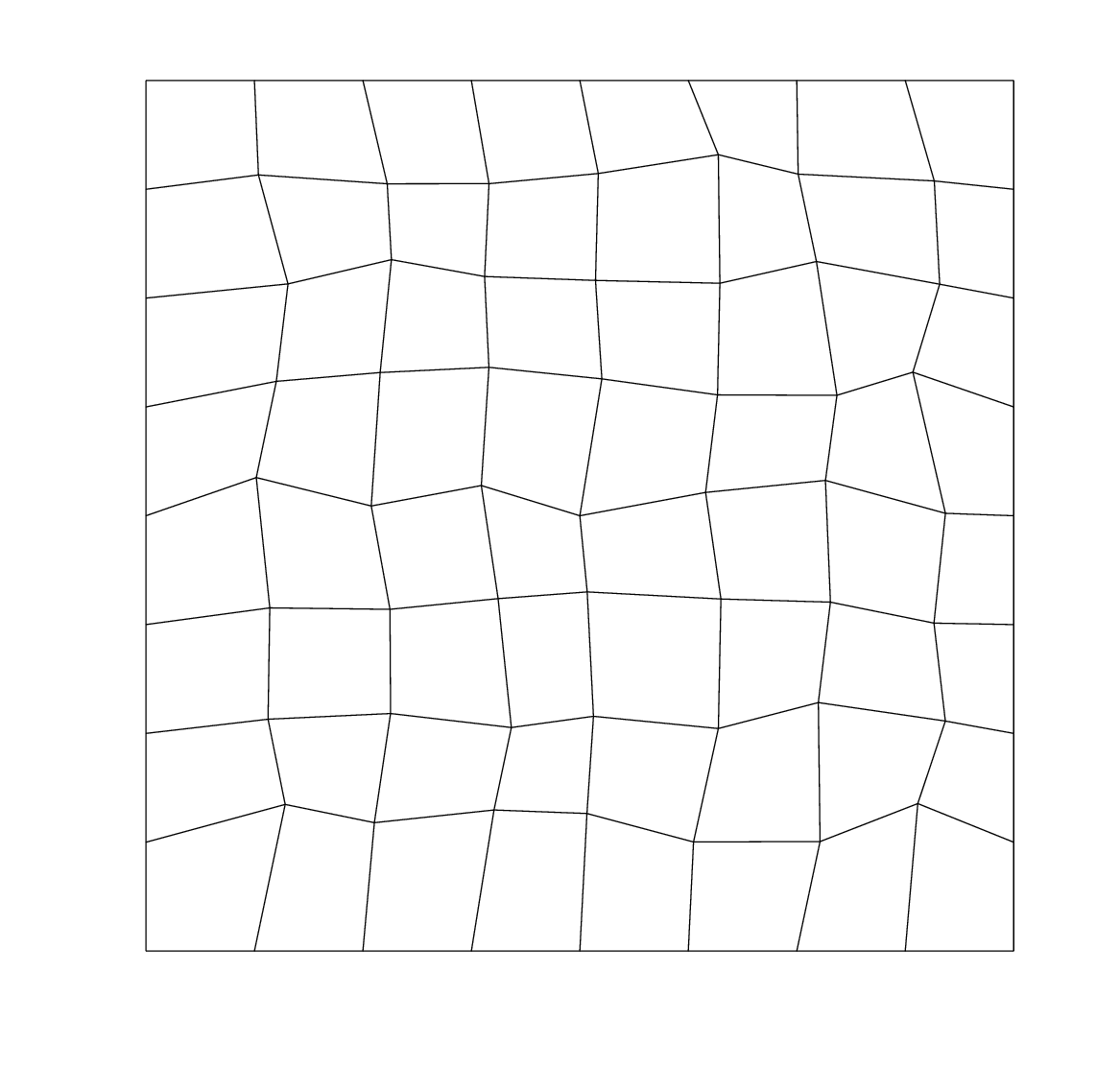}\\
                          \centering\includegraphics[height=3.7cm, width=3.7cm]{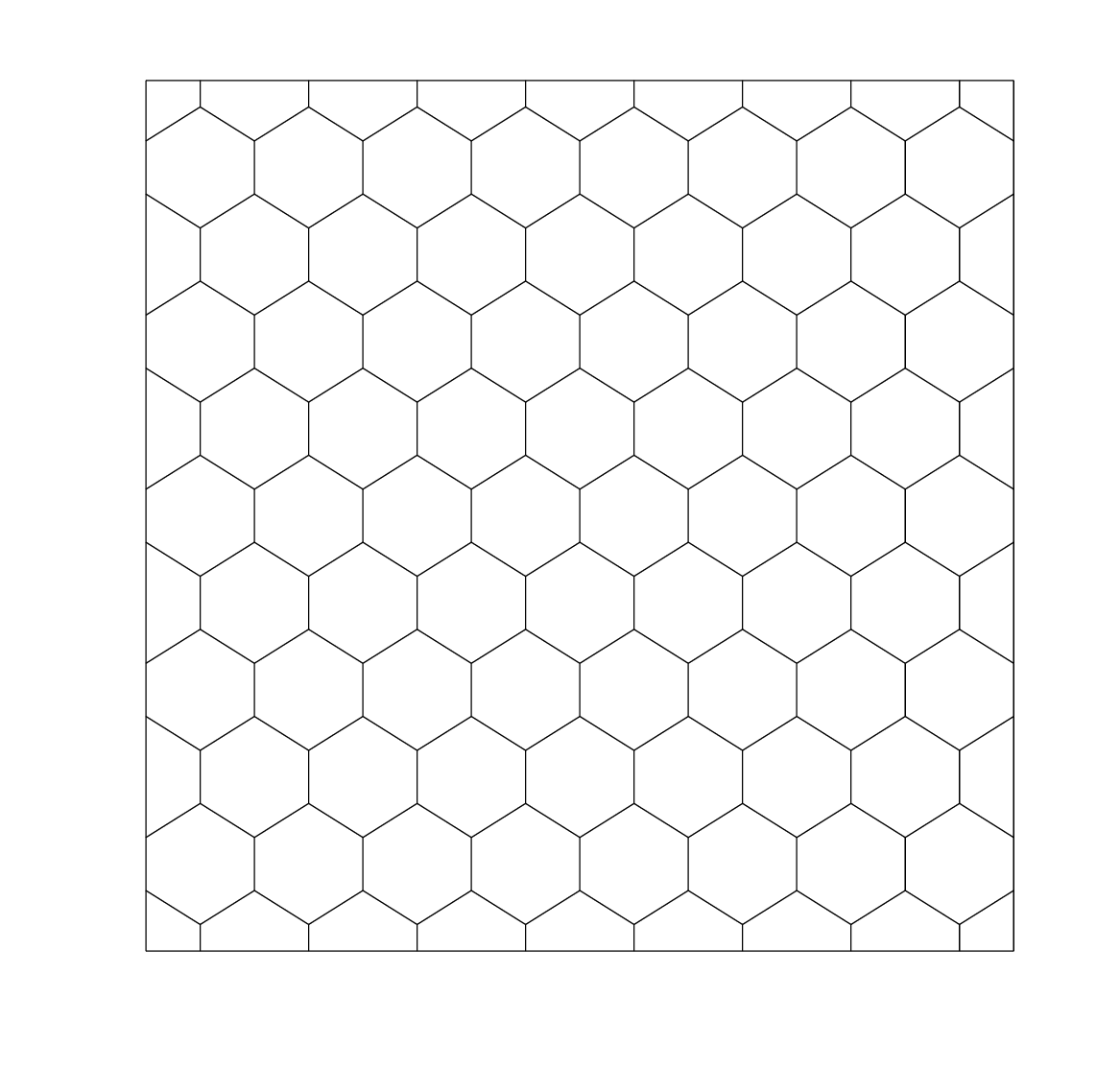}
                          \centering\includegraphics[height=3.7cm, width=3.7cm]{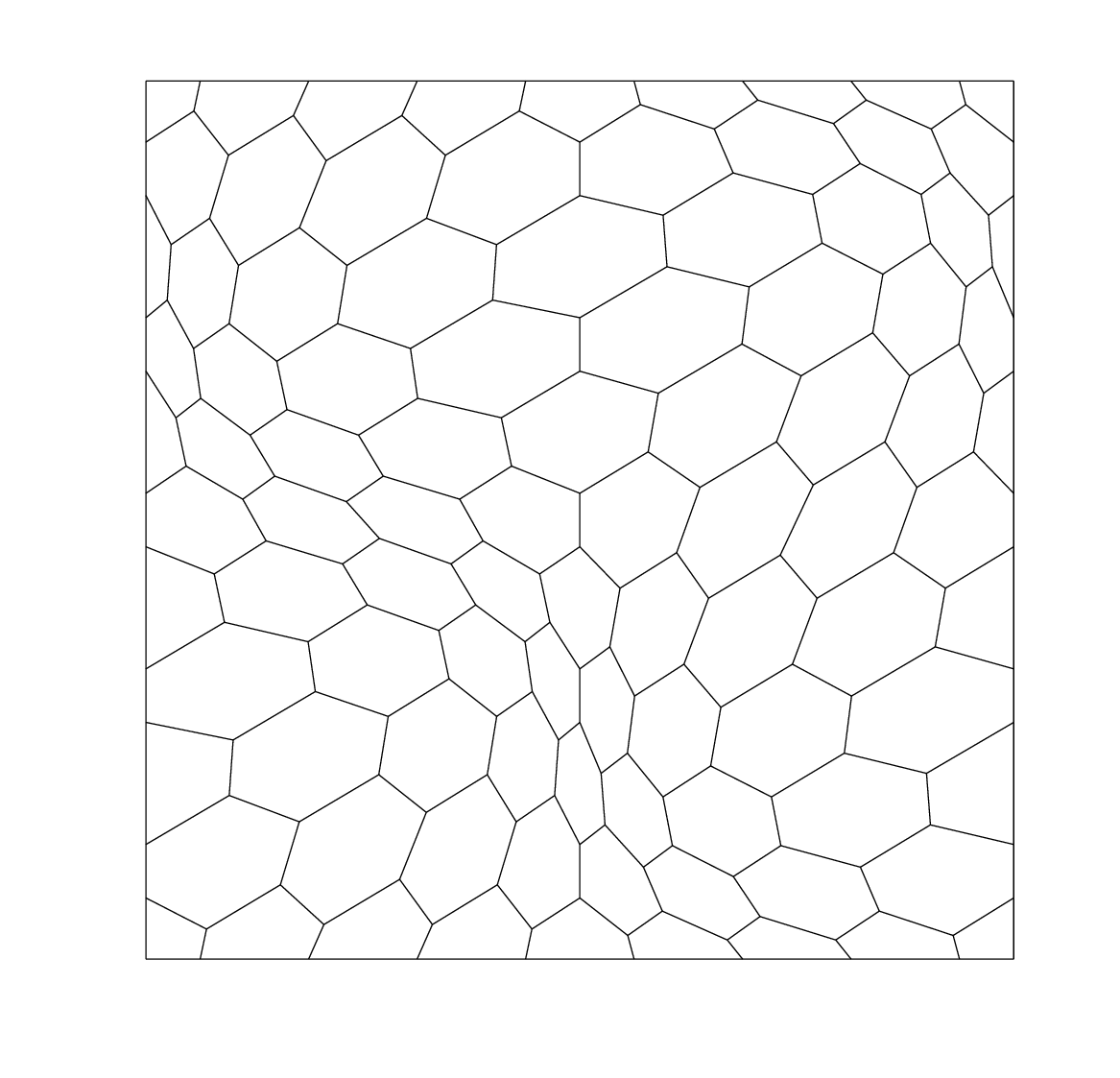}
                           \centering\includegraphics[height=3.7cm, width=3.7cm]{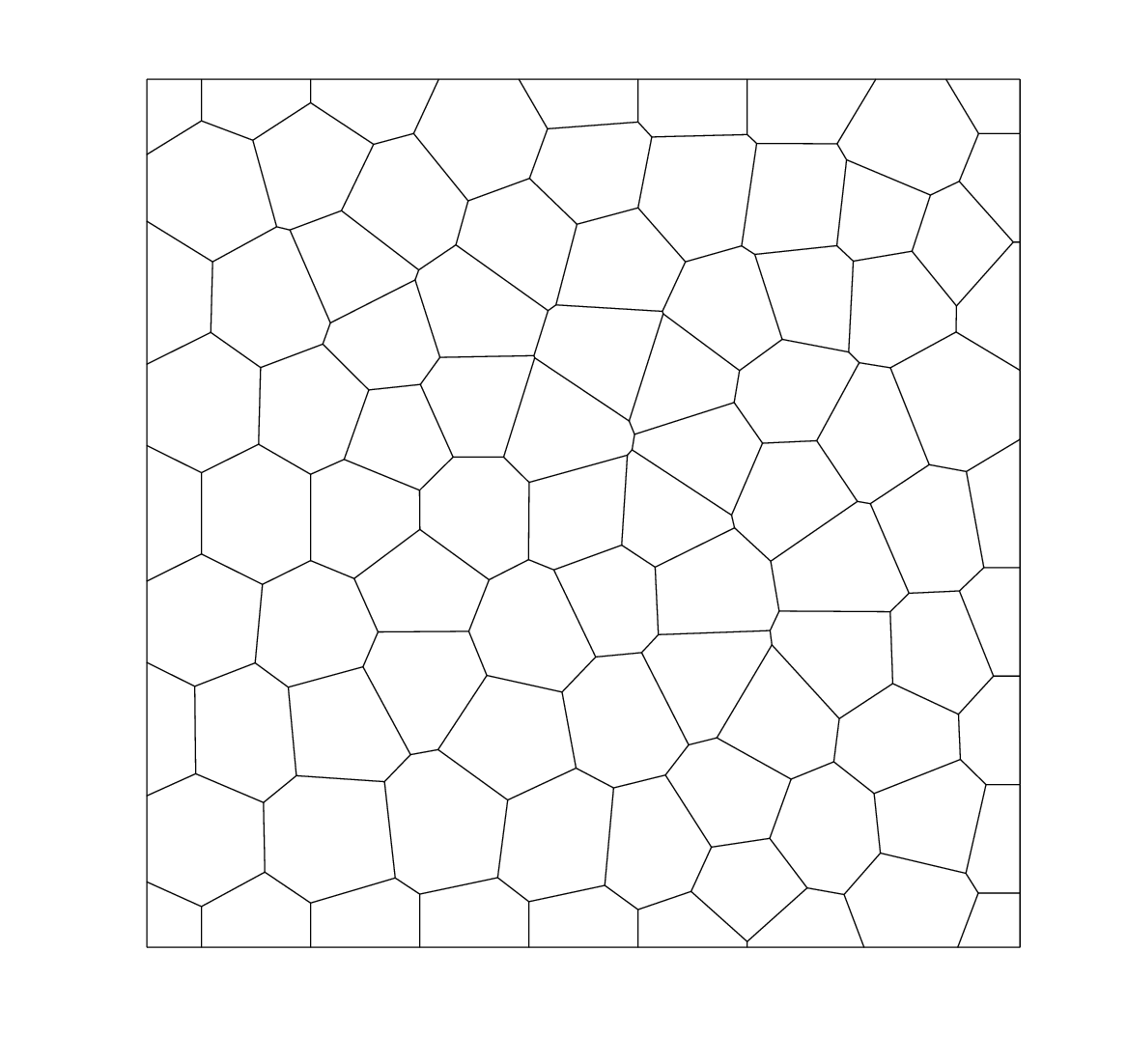}
                   \end{minipage}
		\caption{Initial meshes. From top left to bottom right: $\CT_h^1$, $\CT_h^2$, $\CT_h^3$, $\CT_h^4$, $\CT_h^5$, and $\CT_h^6$, with $N=8$.
}
		\label{FIG:meshes}
	\end{center}
\end{figure}

To complete the proposed VEM \eqref{def:weak_BFH_discrete}, we need to describe the bilinear forms $S_{\textit{V}}^{E}(\cdot,\cdot)$ and $S_{\textit{T}}^{E}(\cdot,\cdot)$ that satisfy \eqref{eq:S_v} and \eqref{eq:S_T}, respectively. For this purpose, we proceed as in \cite{AVV,MR2997471} and use the so-called  dofi--dofi stabilization, which is defined as follows: for each $E$, we denote by $\vec{\bu}_h$, $\vec{\bv}_h$ and $\vec{T}_h$, $\vec{R}_h$, the real-valued vectors containg the values of the local degrees of freedom associated with $\bu_h$, $\bv_h$ in $\bVh$ and with $T_h$, $R_h$ in $\V_h$, respectively. Then, we set
\[
S_{\textit{V}}^{E}(\bu_h,\bv_h):=\vec{\bu}_h\cdot \vec{\bv_h},
\qquad
S_{\textit{T}}^{E}(T_h,R_h):=\vec{T_h}\cdot\vec{R_h}.
\]

To calculate the error between the exact velocity component $\bu$ of the solution and the corresponding approximation obtained with our VEM, namely, $\bu_h$, we will use the following computable quantity
\[
|e_{\bu}|_{1,\O}:=\displaystyle\left(\displaystyle\sum_{E\in\CT_h}|\bu-\bPi_2^{0,E}\bu_h|_{1,E}^2\right)^{\frac{1}{2}}.
\]
A similar expression is used to calculate the error between $T$ and $T_h$. Finally, to calculate the pressure error we consider $\|e_\textsf{p}\|_{0,\O}:=\|\textsf{p}-\textsf{p}_h\|_{0,\O}$.

\subsection{Unit square} Let us first test our method on the unit square $\O=(0,1)^2$. The viscosity coefficient $\nu$, the thermal diffusivity coefficient $\kappa$, and the parameter $r$ are as follows \cite{campana2024finite}:
\begin{enumerate}
\item Test 1: $\nu(T)=1+T$, $\kappa(T)=1+\sin(T)$, and $r=3$.
\item Test 2: $\nu(T)=1+e^{-T}$, $\kappa(T)=2+\sin(T)$, and $r=4$.
\end{enumerate}
The data $\boldsymbol{f}$ and $g$ are chosen so that the exact solution to problem \eqref{def:BFH} is
\[
 \bu(x_1,x_2)=(-x_1^2(x_1-1)^2x_2(x_2-1)(2x_2-1),x_2^2(x_2-1)^2x_1(x_1-1)(2x_1-1)),
\]
$\textsf{p}(x_1,x_2)=x_1x_2(1-x_1)(1-x_2)-1/36$, and $T(x_1,x_2)=x_1^2x_2^2(1-x_1)^2(1-x_2)^2$.¨

In Figures \ref{FIG:error_test1} and \ref{FIG:error_test2}, we present the experimental convergence rates in the semi-norm for the velocity error and the temperature error and in the $\L^2$-norm for the pressure error for tests 1 and 2. We have calculated these errors for the six families of meshes shown in Figure \ref{FIG:meshes} with different levels of refinement. The plots also include a reference line with slope $-2$, which indicates the optimal convergence rate of the method.
\begin{figure}[h]
	\begin{center}
		%\begin{minipage}{15cm}
			\centering\includegraphics[height=4.7cm, width=4cm]{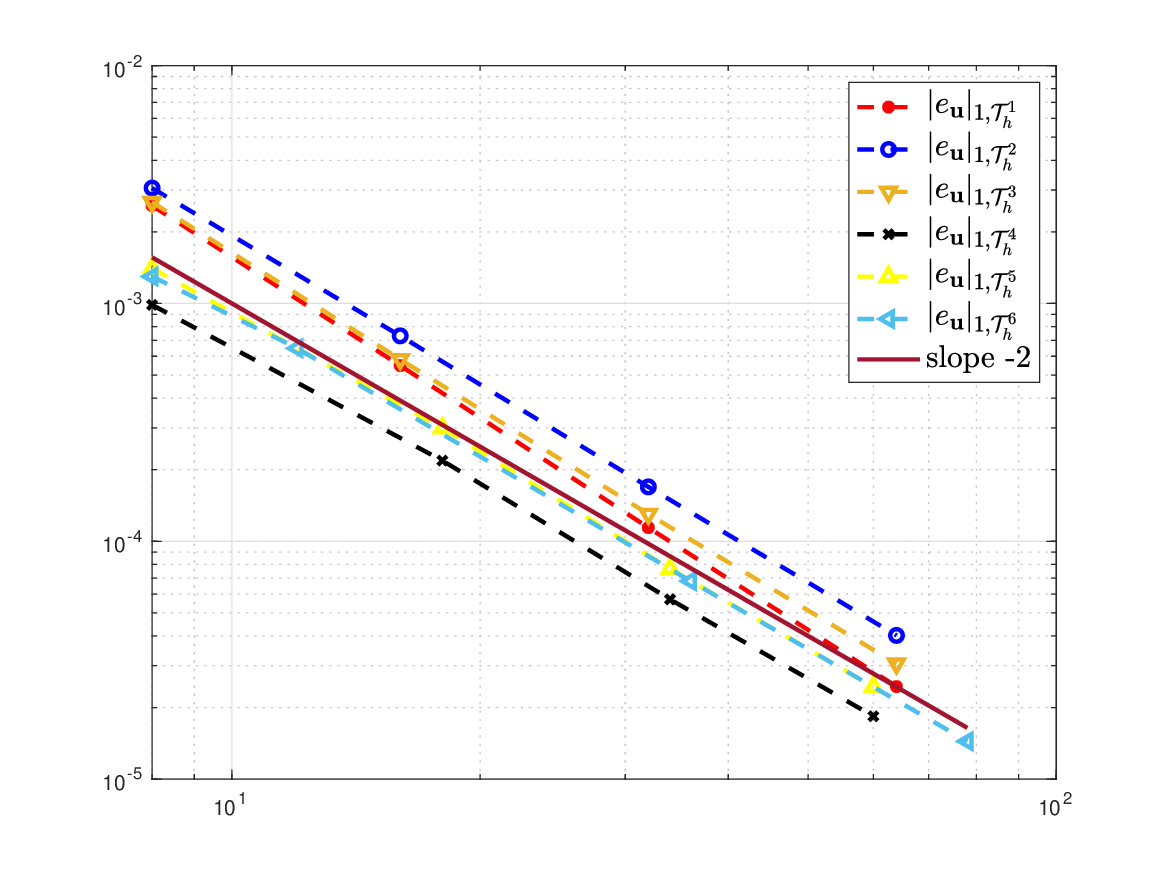}
                          \centering\includegraphics[height=4.7cm, width=4cm]{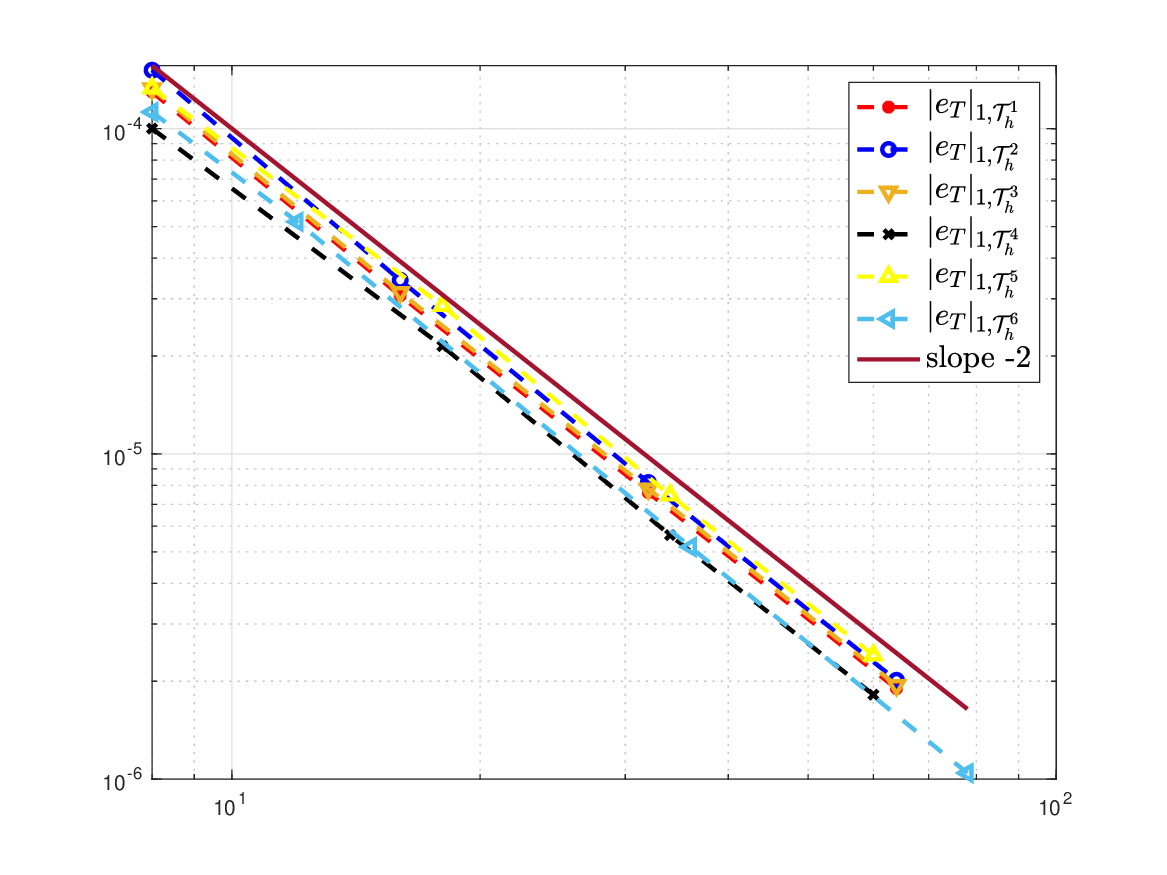}
                         \centering\includegraphics[height=4.7cm, width=4cm]{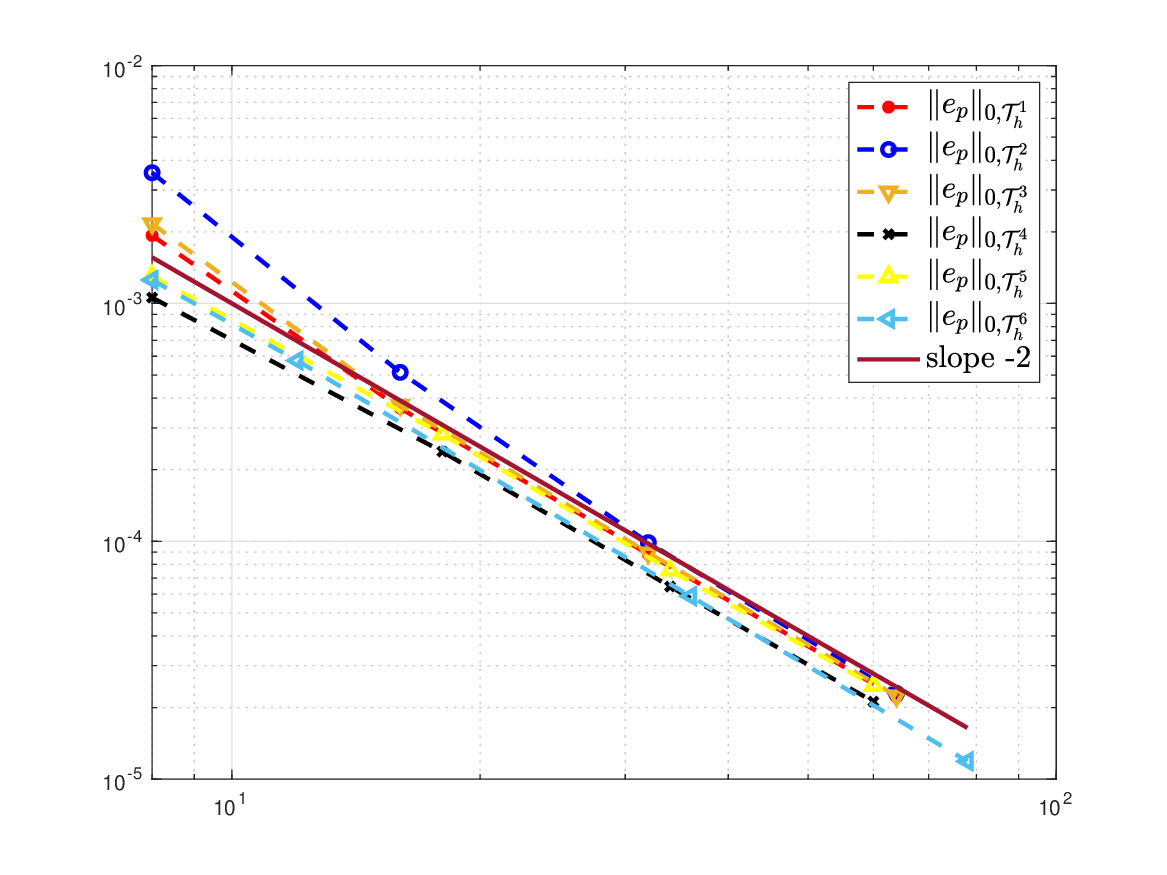}\\
                      %    \centering\includegraphics[height=3.7cm, width=3.7cm]{graphics/thao5.eps}
                       %   \centering\includegraphics[height=3.7cm, width=3.7cm]{graphics/thao6.eps}
                 %  \end{minipage}ERP_Test2
		\caption{Test 1: Experimental convergence rates for $|e_{\bu}|_{1,\O}$, $|e_{T}|_{1,\O}$, and $\|e_{p}\|_{0,\O}$ on different polygonal meshes.}
		\label{FIG:error_test1}
	\end{center}
\end{figure}

\begin{figure}[h]
	\begin{center}
		%\begin{minipage}{15cm}
			\centering\includegraphics[height=4.7cm, width=4cm]{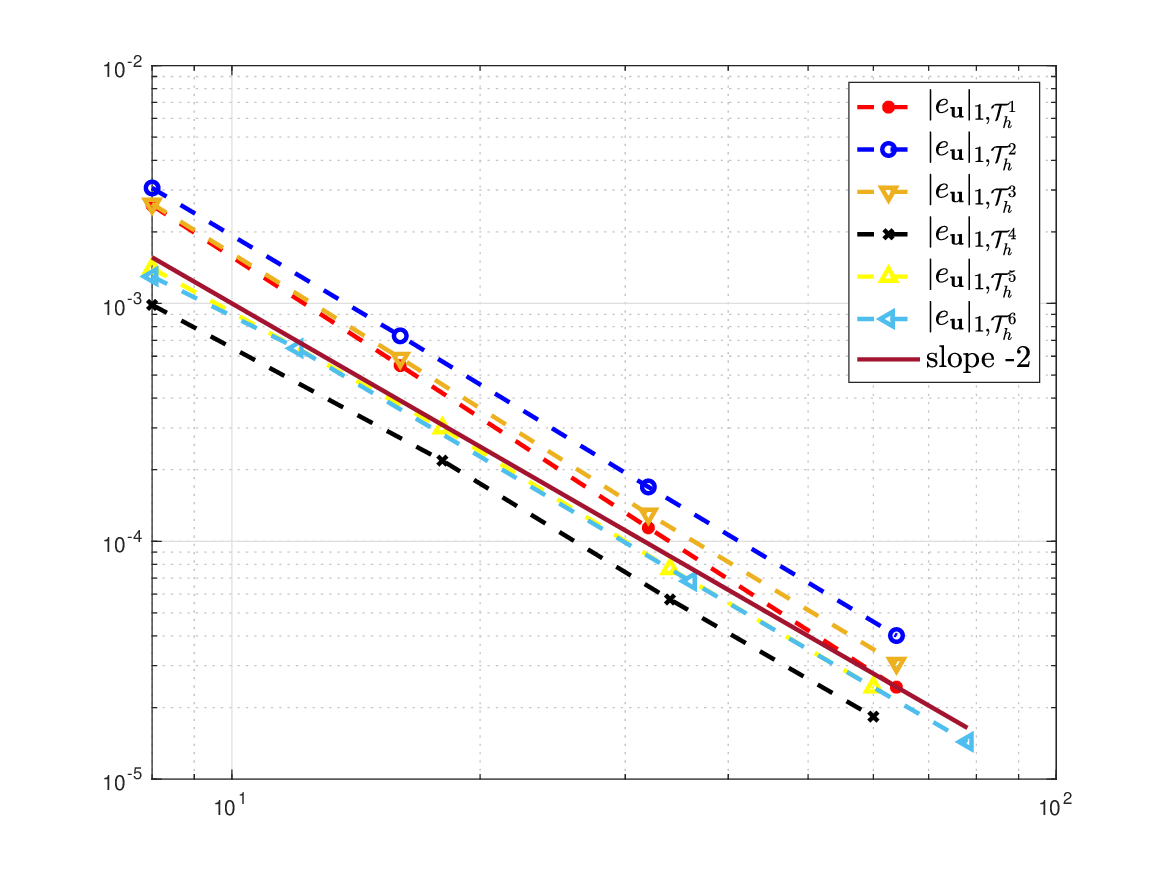}
                          \centering\includegraphics[height=4.7cm, width=4cm]{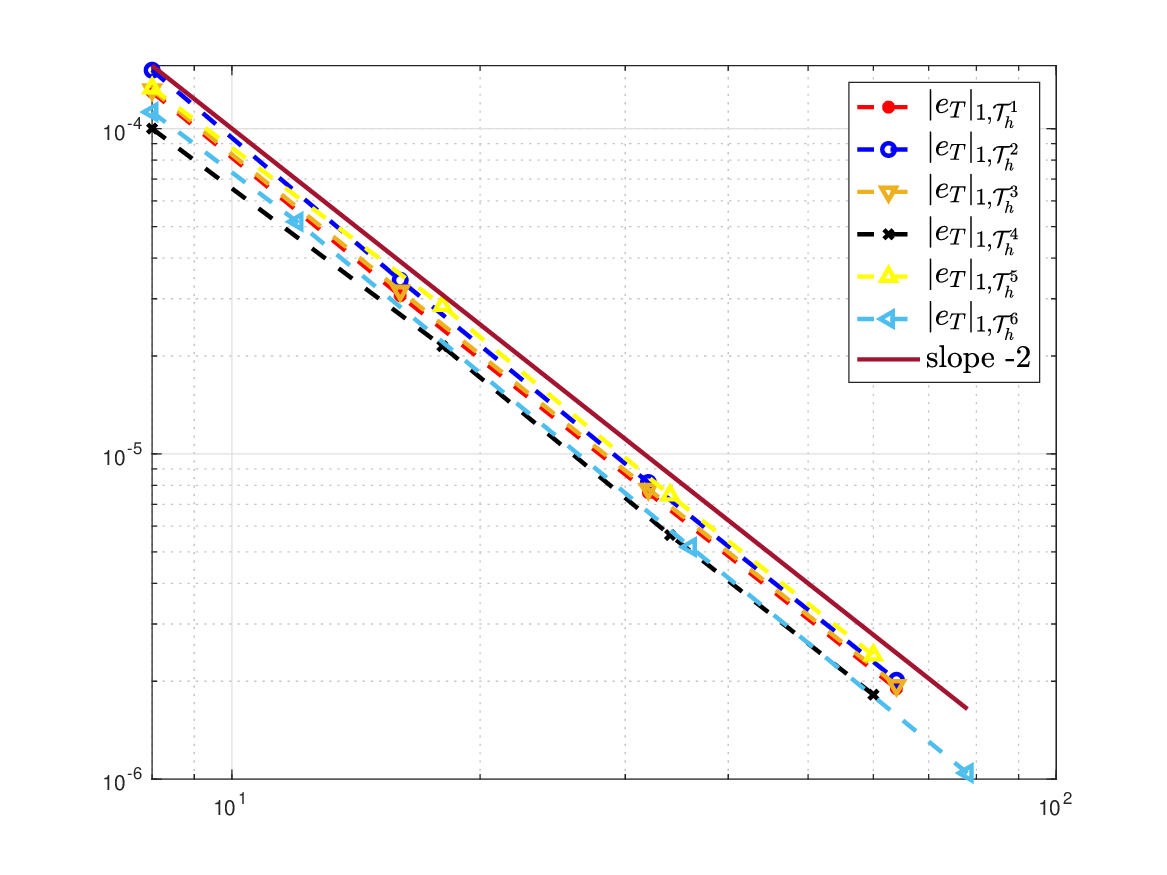}
                         \centering\includegraphics[height=4.7cm, width=4cm]{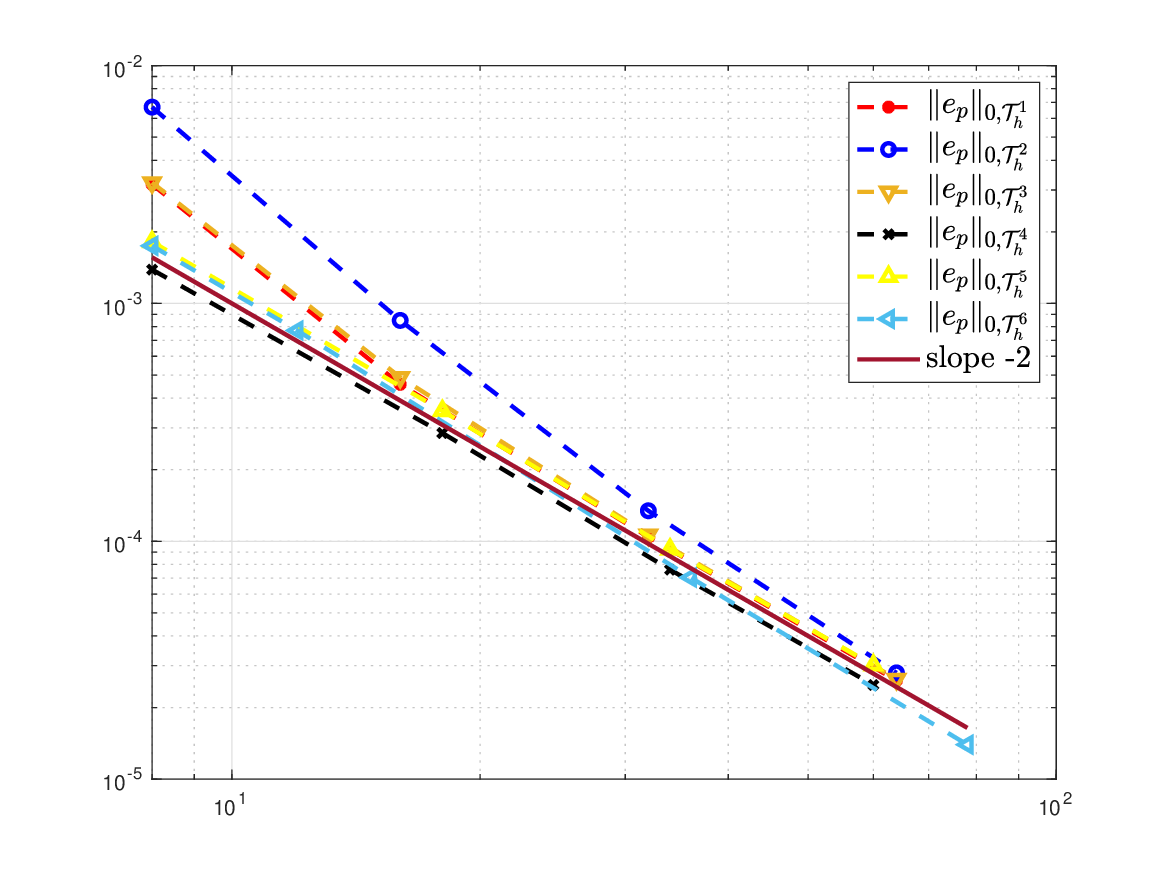}\\
                      %    \centering\includegraphics[height=3.7cm, width=3.7cm]{graphics/thao5.eps}
                       %   \centering\includegraphics[height=3.7cm, width=3.7cm]{graphics/thao6.eps}
                %   \end{minipage}
		\caption{Test 2: Experimental convergence rates for $|e_{\bu}|_{1,\O}$, $|e_{T}|_{1,\O}$, and $\|e_{p}\|_{0,\O}$ on different polygonal meshes.}
		\label{FIG:error_test2}
	\end{center}
\end{figure}

Figures \ref{FIG:error_test1} and \ref{FIG:error_test2} show that the theoretical predictions of Theorem \ref{thm:error_estimates} are confirmed. We also note that optimal experimental convergence rates are obtained for each of the mesh families considered. This is computational evidence of the robustness of the VEM with respect to the geometry of the mesh. Figure \ref{FIG:solution} shows the solution obtained with our method using a particular family of polygonal meshes.
\begin{figure}[h]
	\begin{center}
		\begin{minipage}{15cm}
		  \centering\includegraphics[height=5cm, width=6cm]{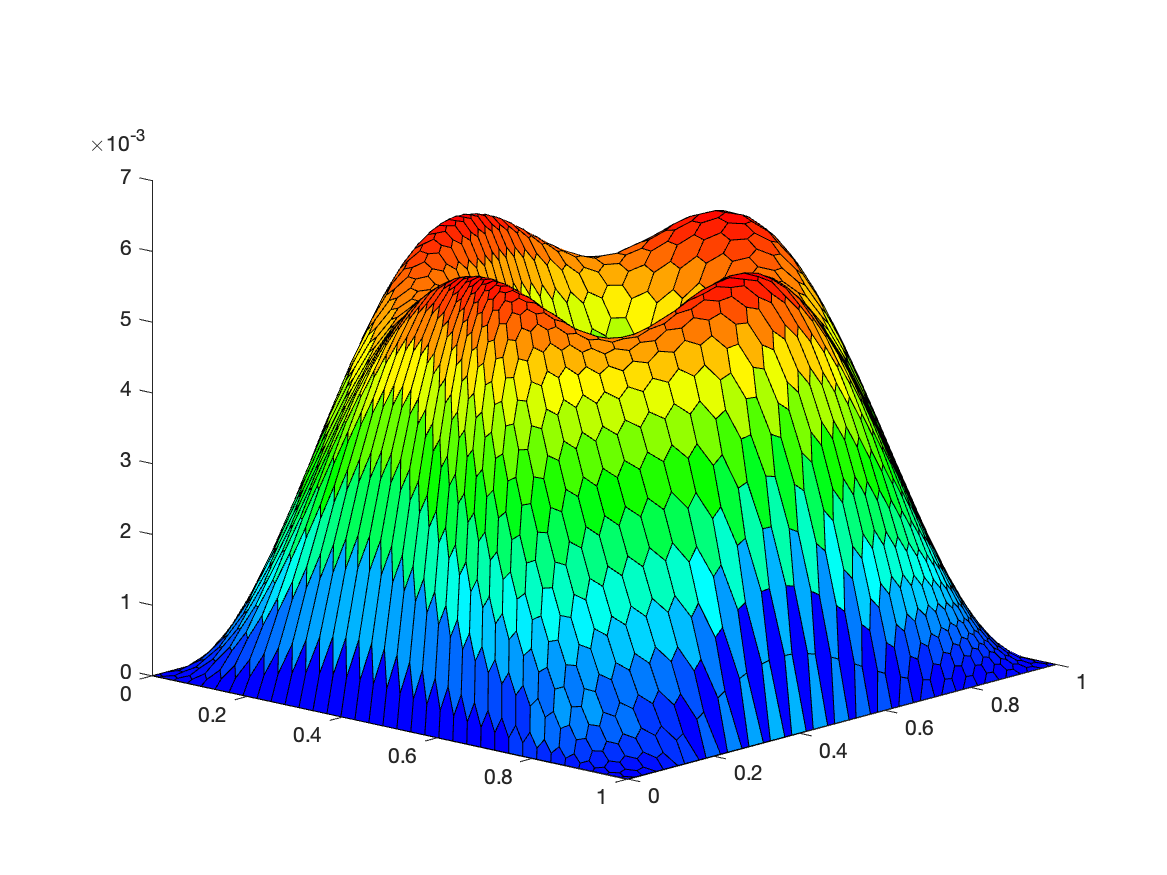}
                         \centering\includegraphics[height=5cm, width=6cm]{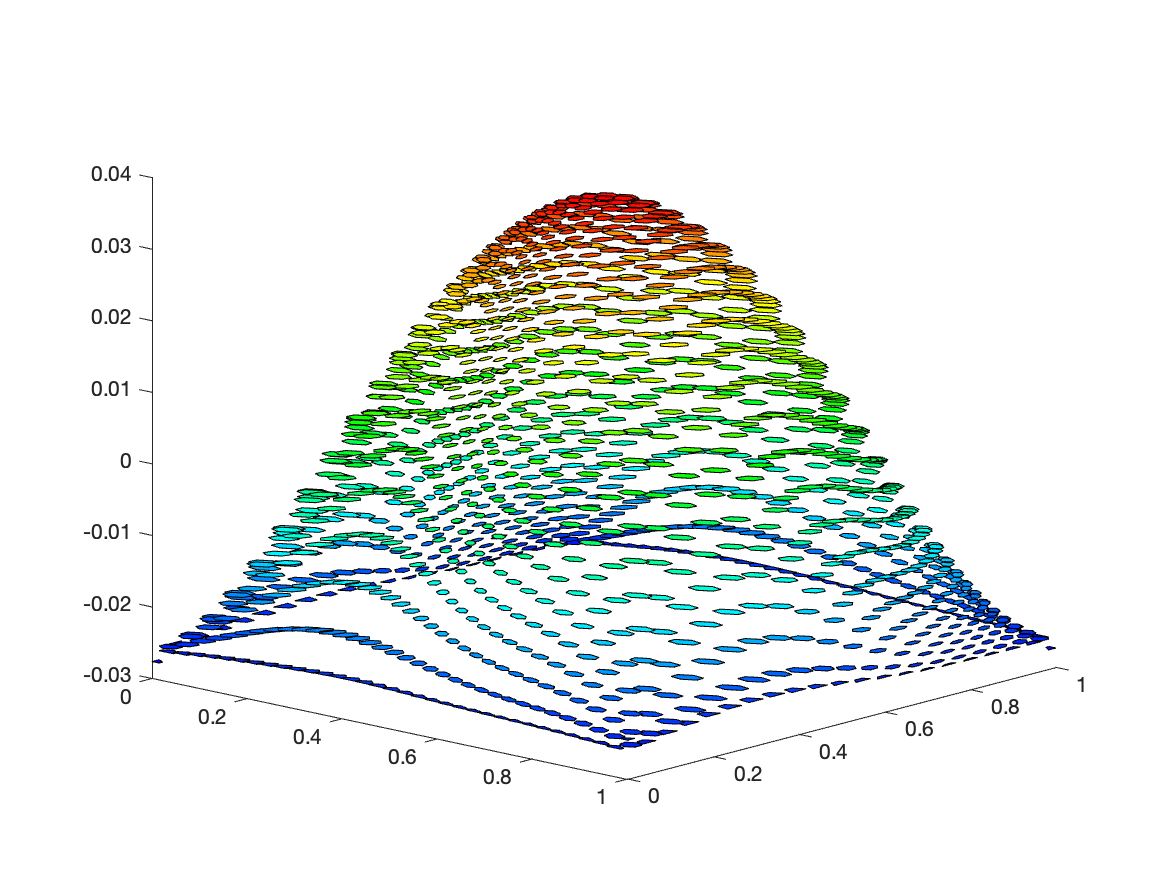}\\
                        % \centering\includegraphics[height=4.6cm, width=4.7cm]{}
                          \centering\includegraphics[height=5cm, width=6cm]{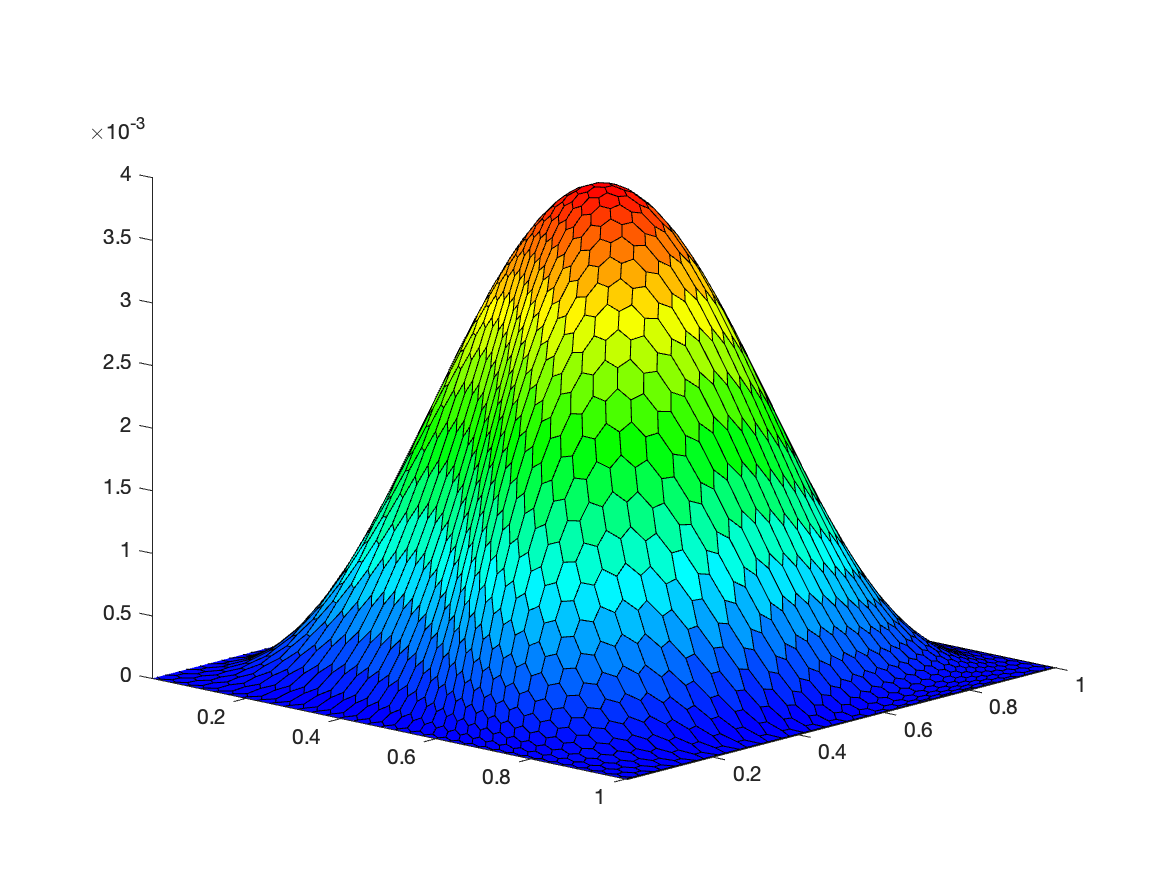}
                       %   \centering\includegraphics[height=3.7cm, width=3.7cm]{graphics/thao6.eps}
                   \end{minipage}
		\caption{Virtual element approximatiopn: magnitude of the velocity $\bu_h$ (left above), pressure $p_h$ (right above), and temperature $T_h$ (below) in $\CT_h^5$}
		\label{FIG:solution}
	\end{center}
\end{figure}

The next goal is to prove computationally that our VEM is divergence-free. To accomplish this task, we calculate the value of the $\L^2$-norm of the discrete divergence of the velocity field $\bu_h$. In Table \ref{tabla1}, we present the values of $\|\div \bu_h\|_{0,\O}$ that we computed for five mesh families at different refinement levels. As the results in Table \ref{tabla1} show, the method is indeed divergence-free, and this again is independent of the polygonal mesh used.
\begin{table}[h]
\caption{ Test 2: $\|\div \bu_h\|_{0,\O}$ computed for five mesh families.}
\label{tabla1}
\begin{center}
\begin{tabular}{|c|c|c|c|c|c|c|c|} \hline
\multicolumn{6}{ |c| }{$\|\div \bu_h\|_{0,\O}$} \\ \hline 
 $N$ & $\CT_{h}^{1}$ & $\CT_{h}^{2}$ & $\CT_{h}^{3}$ & $\CT_{h}^{4}$  &$\CT_{h}^{5}$. \\ \hline 
8 &4.3865e-19 &  4.7045e-19 &  6.9960e-19   &6.3543e-19  & 8.0372e-19\\
 16  &4.6768e-19&   4.5652e-19 &  6.3817e-19 &  6.2734e-19&   7.4630e-19\\
 32  &4.8928e-19  & 5.5585e-19  & 5.6303e-19  & 5.0117e-19   &5.8633e-19\\
   64  &       4.6112e-19  & 5.5279e-19  & 5.6886e-19 &  5.0243e-19  & 5.7873e-19\\\hline
\end{tabular}
\end{center}
\end{table}
%\GR{Finally, Figure \ref{FIG:solution} shows the solutions obtained by the VEM method for different mesh families.}
\subsection{Influence of the viscosity}
In this test, we investigate the influence of the viscosity coefficient $\nu(\cdot)$ on the calculation of the error in the approximation of the velocity, pressure, and temperature variables. It is well-known that most standard approximation techniques are not robust with respect to this coefficient. To perform this study, we choose different values of $\nu$ and investigate the experimental convergence rates on different polygonal meshes. We use a similar configuration as in the previous tests: $\O=(0,1)^2$, $s=3$, and the solution $(\bu, p, T)$ as previously described. For simplicity, we assume that $\kappa=1$ and consider constant values for the viscosity parameter: $\nu=10^{-1}, 10^{-4}, 10^{-8}$.

\begin{figure}[h]
	\begin{center}
		%\begin{minipage}{15cm}
			\centering\includegraphics[height=4.7cm, width=4cm]{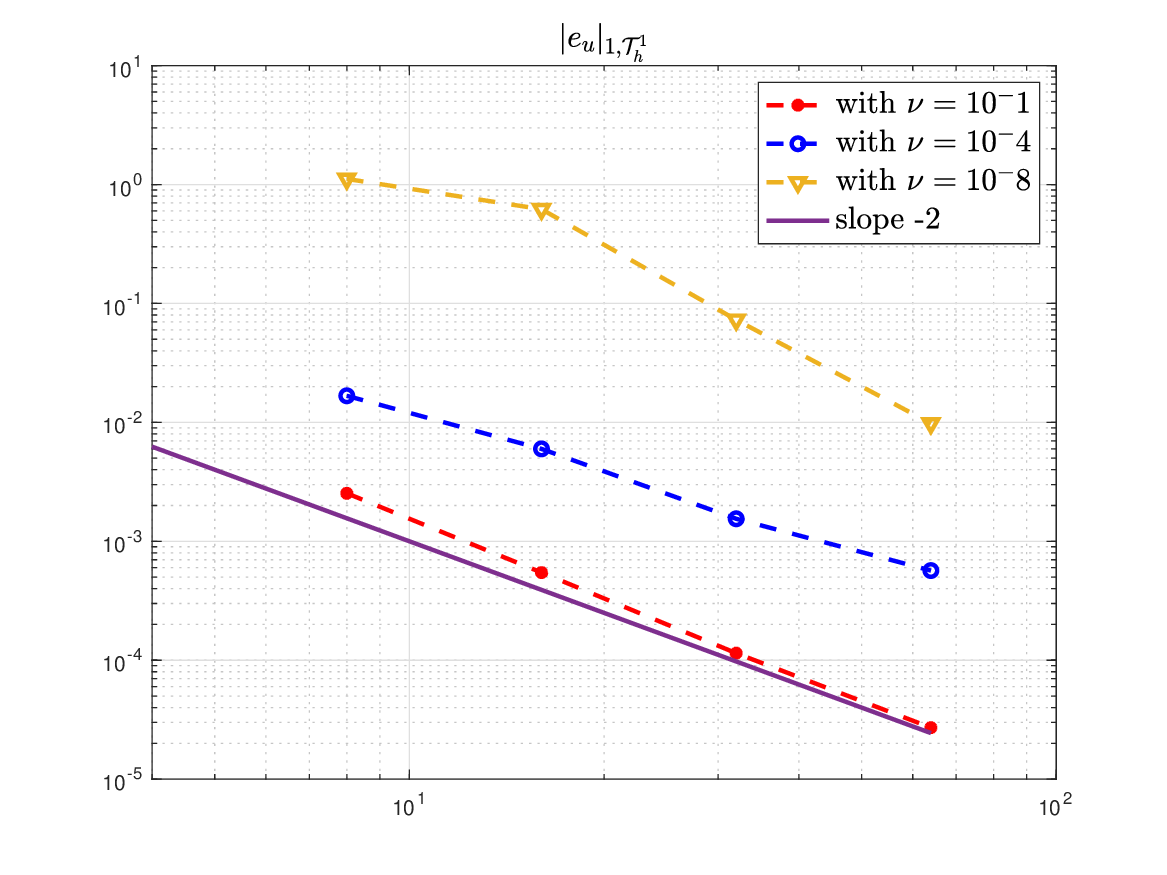}
                          \centering\includegraphics[height=4.7cm, width=4cm]{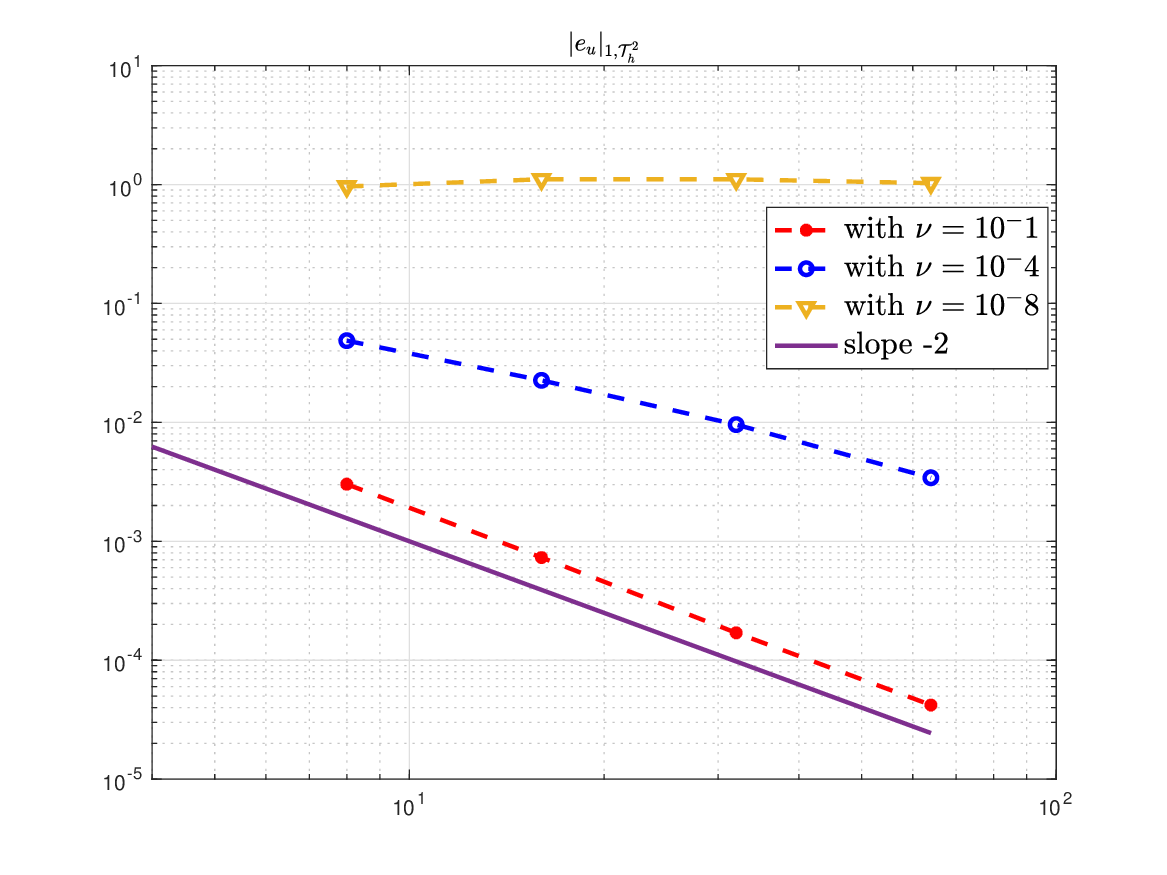}
                         \centering\includegraphics[height=4.7cm, width=4cm]{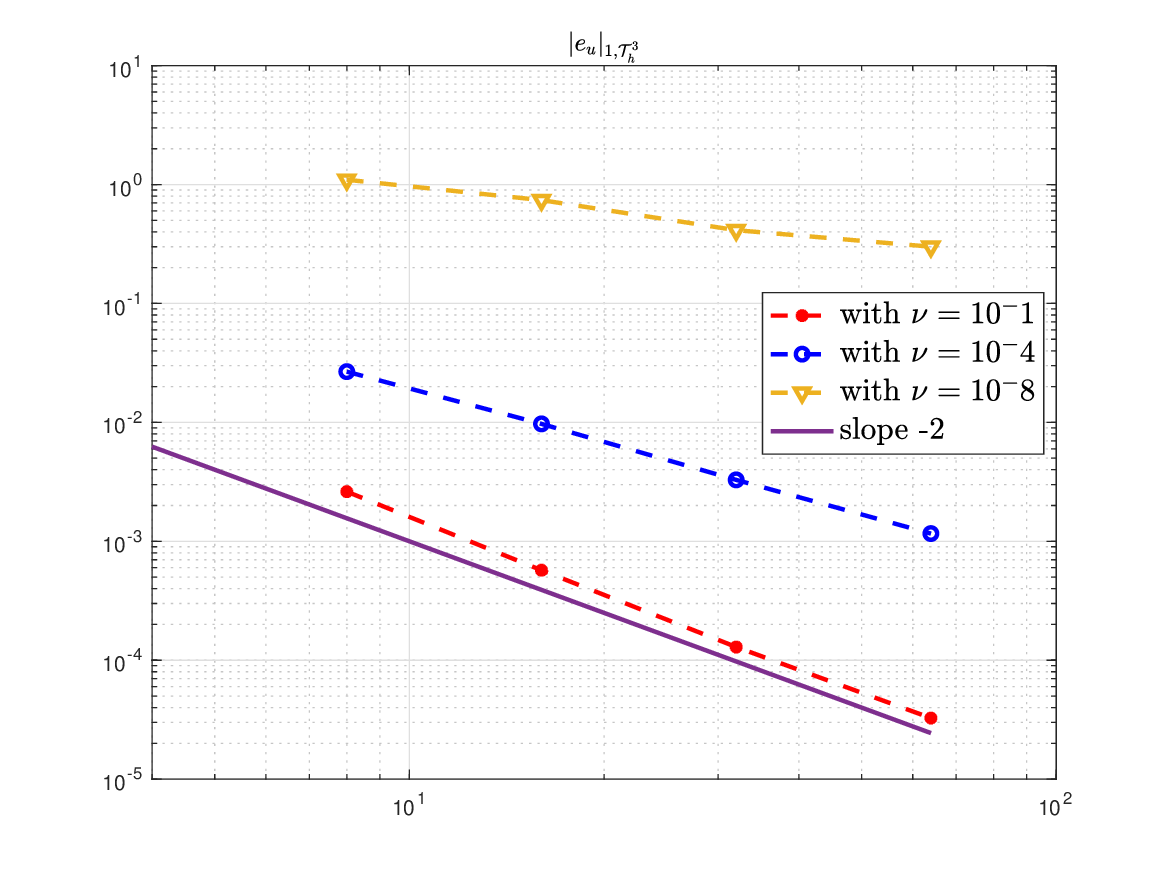}\\
                      %    \centering\includegraphics[height=3.7cm, width=3.7cm]{graphics/thao5.eps}
                       %   \centering\includegraphics[height=3.7cm, width=3.7cm]{graphics/thao6.eps}
                 %  \end{minipage}ERP_Test2
		\caption{Test 3: Velocity error for $\nu =10^{-1},10^{-4},10^{-8}$ on the meshes $\CT_h^1, \CT_h^2$, and $\CT_h^3$.}
		\label{FIG:error_nu1}
	\end{center}
\end{figure}

\begin{figure}[h]
	\begin{center}
		%\begin{minipage}{15cm}
			\centering\includegraphics[height=4.7cm, width=4cm]{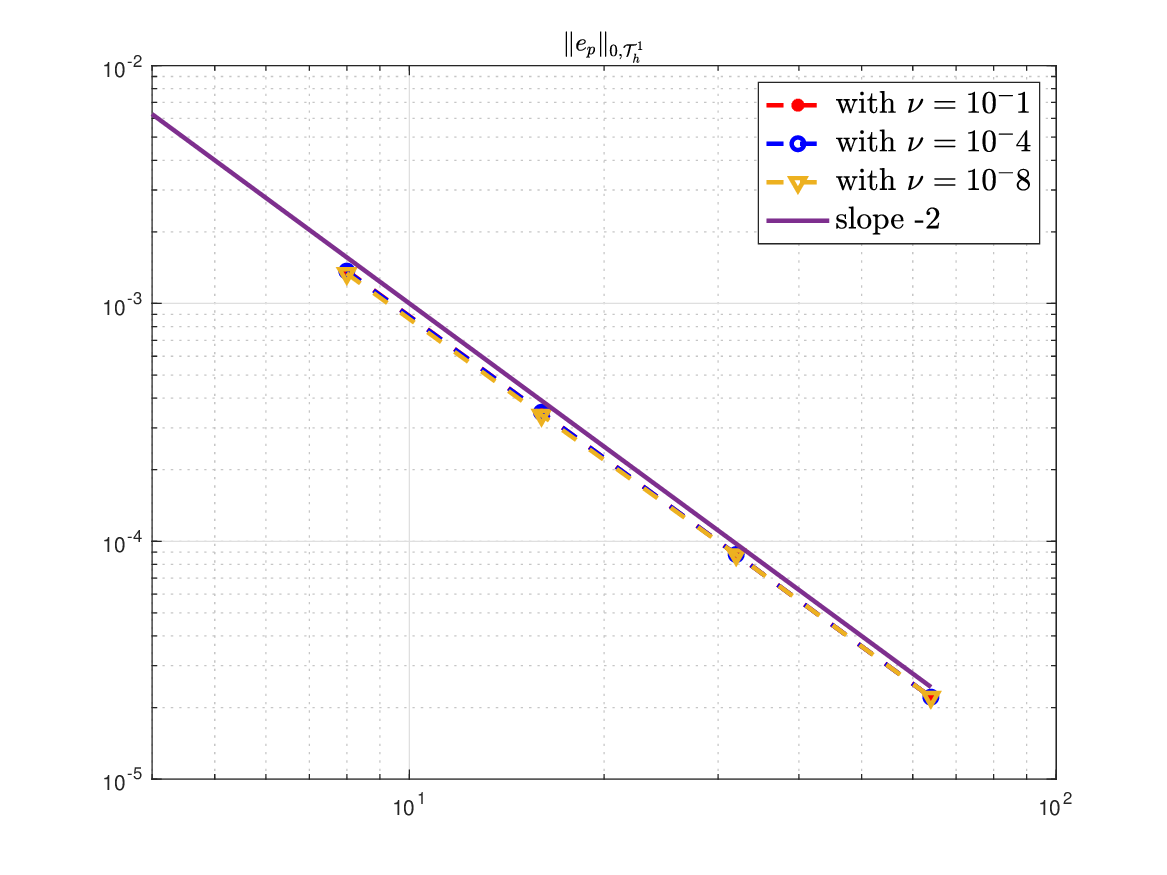}
                          \centering\includegraphics[height=4.7cm, width=4cm]{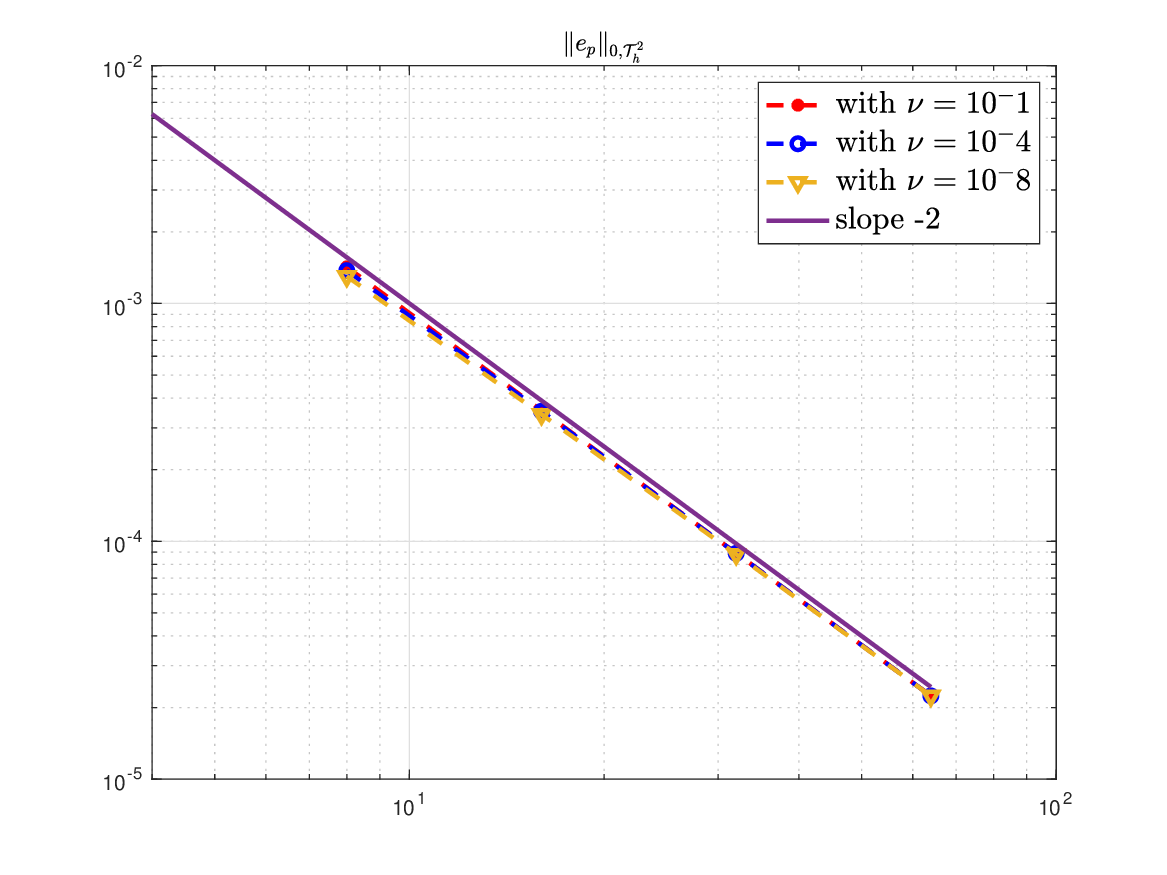}
                         \centering\includegraphics[height=4.7cm, width=4cm]{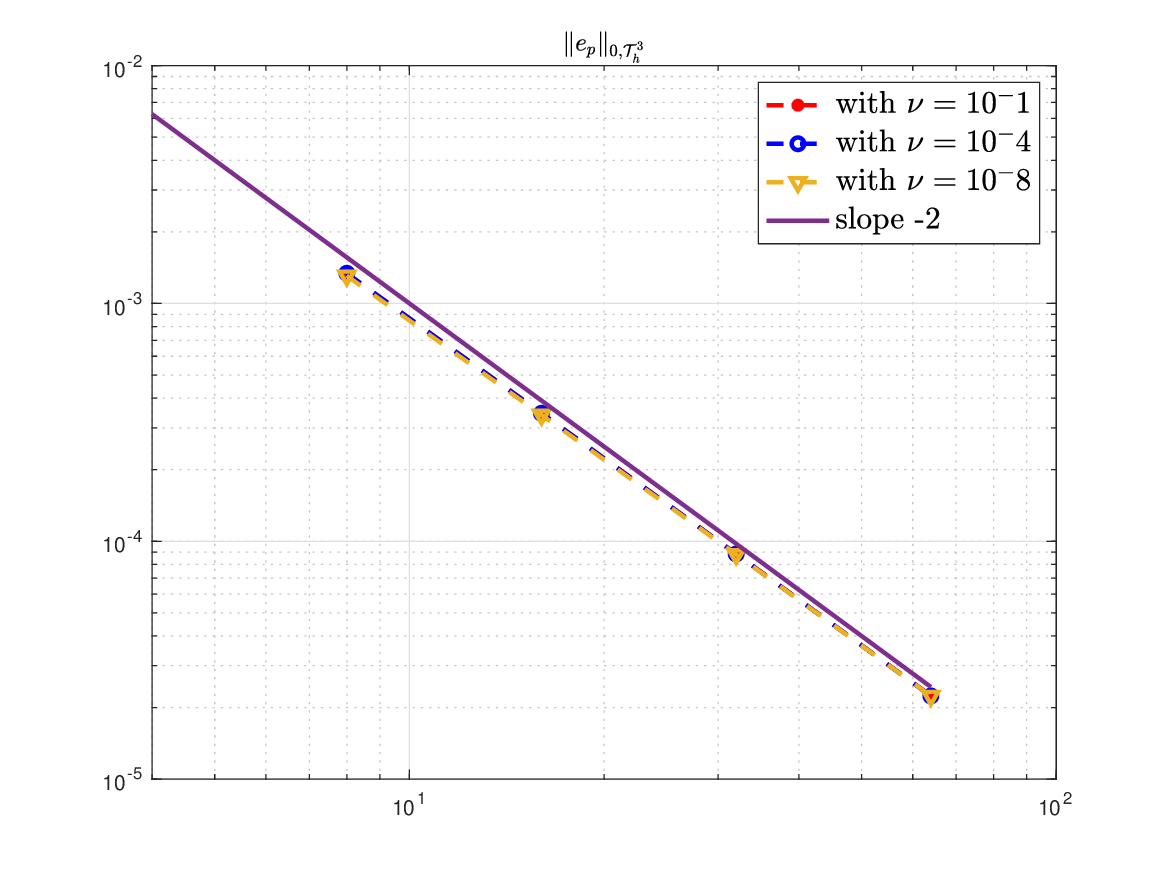}\\
                      %    \centering\includegraphics[height=3.7cm, width=3.7cm]{graphics/thao5.eps}
                       %   \centering\includegraphics[height=3.7cm, width=3.7cm]{graphics/thao6.eps}
                 %  \end{minipage}ERP_Test2
		\caption{Test 3: Pressure error for $\nu =10^{-1},10^{-4},10^{-8}$ on the meshes $\CT_h^1, \CT_h^2$ and $\CT_h^3$.}		\label{FIG:error_nu2}
	\end{center}
\end{figure}
\begin{figure}[h]
	\begin{center}
		%\begin{minipage}{15cm}
			\centering\includegraphics[height=4.7cm, width=4cm]{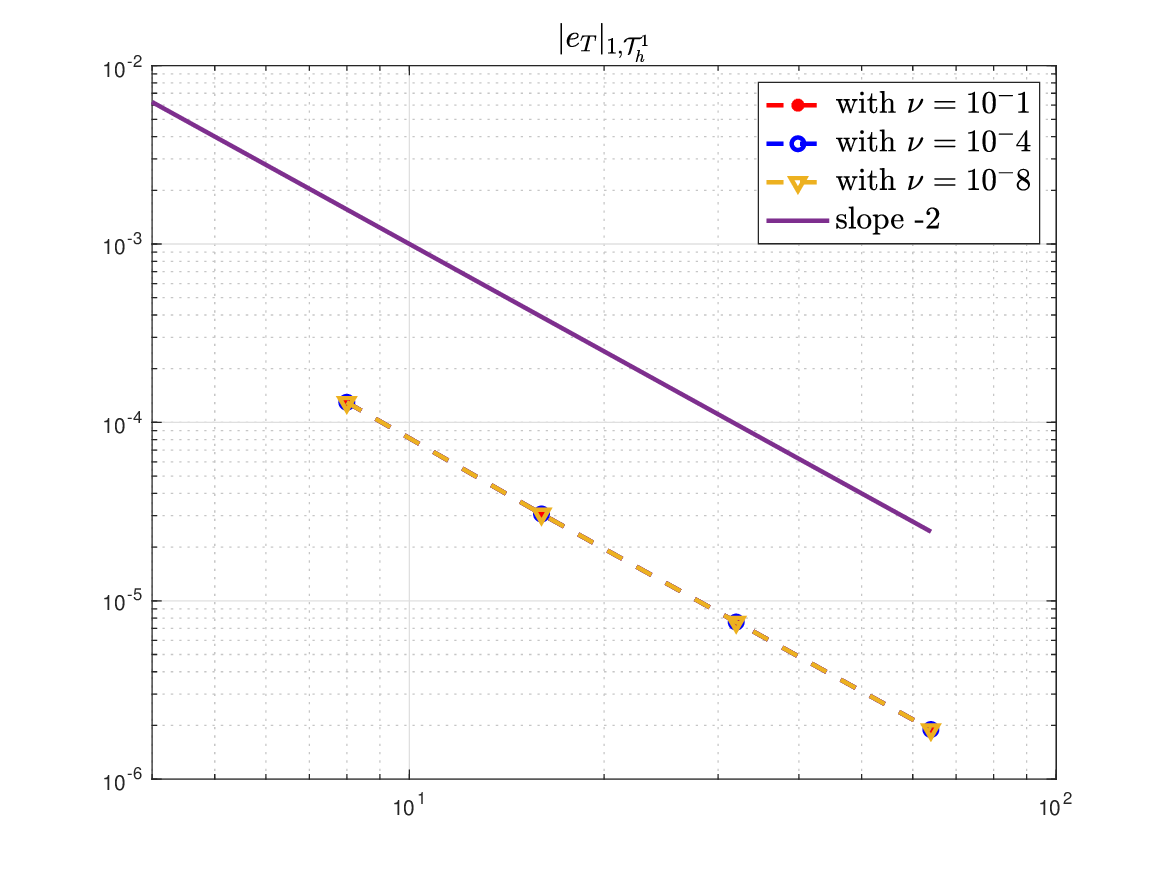}
                          \centering\includegraphics[height=4.7cm, width=4cm]{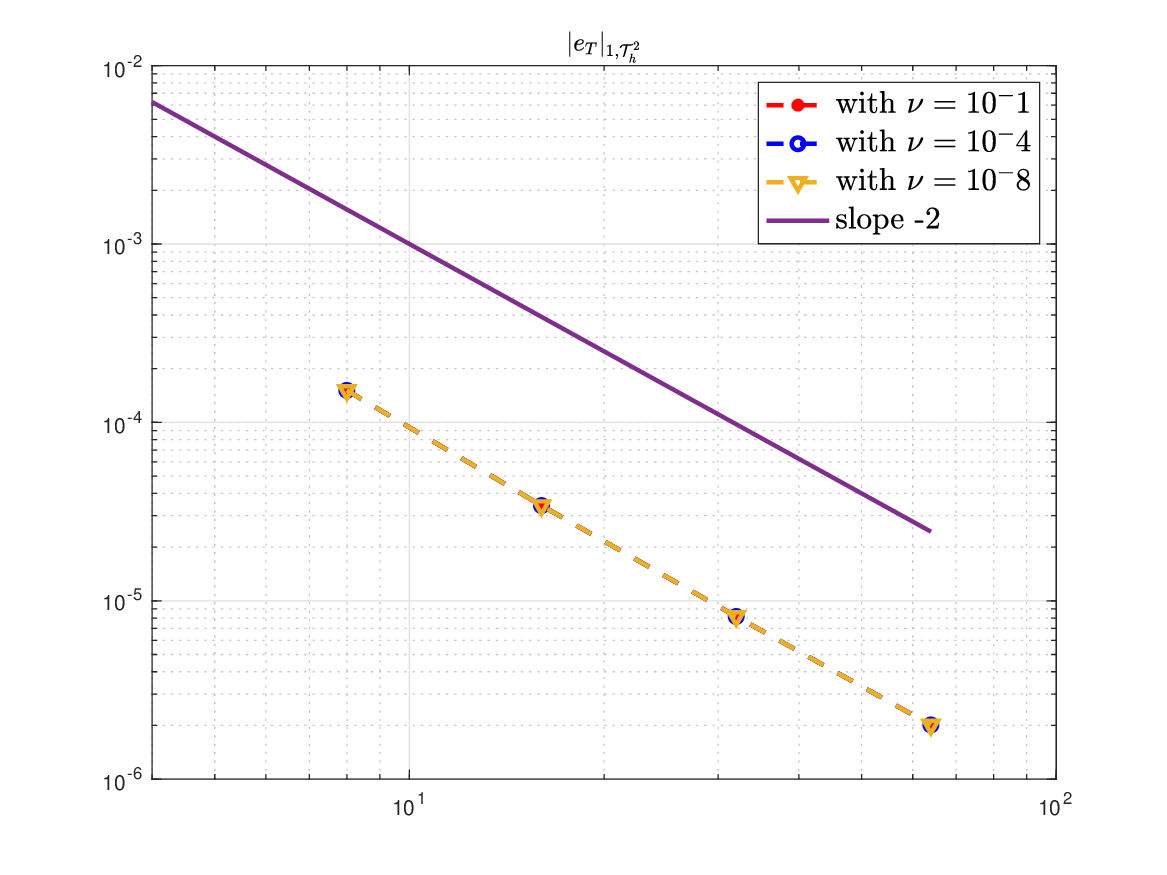}
                         \centering\includegraphics[height=4.7cm, width=4cm]{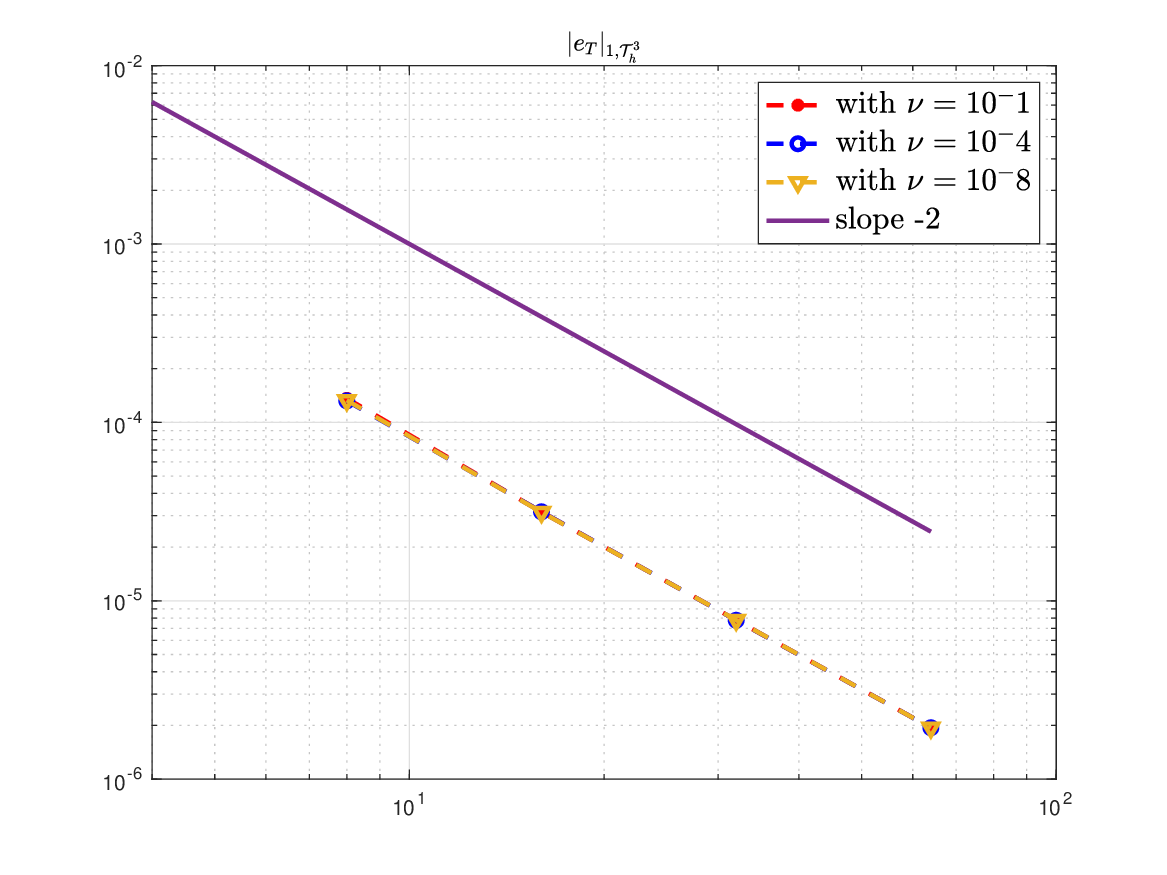}\\
                      %    \centering\includegraphics[height=3.7cm, width=3.7cm]{graphics/thao5.eps}
                       %   \centering\includegraphics[height=3.7cm, width=3.7cm]{graphics/thao6.eps}
                 %  \end{minipage}ERP_Test2
		\caption{Test 3: Temperature error for $\nu =10^{-1},10^{-4},10^{-8}$ on the meshes $\CT_h^1, \CT_h^2$ and $\CT_h^3$.}
		\label{FIG:error_nu3}
	\end{center}
\end{figure}

Our results are shown in Figures \ref{FIG:error_nu1}, \ref{FIG:error_nu2} and \ref{FIG:error_nu3}. In Figure \ref{FIG:error_nu1}, we observe that the optimal experimental convergence rate for the velocity error is no longer achieved when the viscosity coefficient becomes smaller. This phenomenon occurs for different meshes, which leads to the conclusion that the loss of convergence for the velocity does not depend on the geometry of the mesh, but only on the physical parameter. A different behavior is observed for the temperature error and the pressure error: the optimal convergence rate is also attained when the viscosity becomes smaller.  We would like to note that this is independent of the considered polygonal mesh. The results obtained for the velocity error and the pressure error are in agreement with the numerical tests reported in \cite{NMTMA-17-210}. Our results also show that the method works for the  temperature error in the same way as for the pressure error. We emphasize that the derivation of an optimal error estimate that is robust with respect to the coefficients $\nu(\cdot)$ and $\kappa(\cdot)$ is not analyzed in our work and is a valuable option for future extensions of the study presented here.
% In \cite{FDMC20}, an alternative is presented to prevent the velocity error from increasing as the viscosity coefficient decreases. The authors propose a decomposition of the right-hand side and an appropriate virtual discretisation. However, this approach diverges from our original objective, although it represents a valuable option for future extensions of the study presented here.
\\ \\
\textbf{Declaration of competing interest}

The authors declare that they have no known competing financial interests or personal relationships that could have appeared
to influence the work reported in this paper.
\\ \\
\textbf{Data availability}

Data will be made available on request.

\bibliographystyle{siam}
\footnotesize
\bibliography{bib_LOQ}

\end{document}